\title[Virtual an arrow Temperley--Lieb algebras]{Virtual an arrow Temperley--Lieb algebras, Markov traces, and virtual link invariants}
\author[L Paris]{Luis Paris}
\address{Luis Paris, IMB, UMR 5584, CNRS, Univ. Bourgogne Franche-Comté, 21000 Dijon, France}
\email{lparis@u-bourgogne.fr}
\author[L Rabenda]{Lo\"ic Rabenda}
\address{Lo\"ic Rabenda, IMB, UMR 5584, CNRS, Univ. Bourgogne Franche-Comté, 21000 Dijon, France}
\email{loic.rabenda@gmail.com}
\newtheorem{thm}{Theorem}[section]
\newtheorem{lem}[thm]{Lemma}
\newtheorem{prop}[thm]{Proposition}
\newtheorem{corl}[thm]{Corollary}
\theoremstyle{definition}
\newtheorem*{rem}{Remark}
\newtheorem*{expl}{Example}
\newtheorem*{acknow}{Acknowledgments}
\newtheorem*{interpret}{Interpretation}
\numberwithin{equation}{section}
\renewcommand{\thefigure}{\ifnum \c@section>\z@ \thesection.\fi
 \@arabic\c@figure}
\begin{document}

\def\R{\mathbb R} \def\SSS{\mathfrak S} \def\VB{{\rm VB}} 
\def\VV{\mathcal V} \def\LL{\mathcal L} \def\Z{\mathbb Z}
\def\EE{\mathcal E} \def\VTL{{\rm VTL}} \def\BB{\mathcal B}
\def\UU{\mathcal U} \def\N{\mathbb N} \def\MM{\mathcal M}
\def\TL{{\rm TL}} \def\ZZ{\mathcal Z} \def\ATL{{\rm ATL}}
\def\FF{\mathcal F} \def\Span{{\rm Span}}


\begin{abstract}
Let $R^f = \Z [A^{\pm 1}]$ be the algebra of Laurent polynomials in the variable $A$ and let $R^a = \Z[A^{\pm 1}, z_1, z_2, \dots]$ be the algebra of Laurent polynomials in the variable $A$ and standard polynomials in the variables $z_1, z_2, \dots$.
For $n \ge 1$ we denote by $\VB_n$ the virtual braid group on $n$ strands.
We define two towers of algebras $\{ \VTL_n (R^f) \}_{n=1}^\infty$ and $\{ \ATL_n (R^a) \}_{n=1}^\infty$ in terms of diagrams.
For each $n \ge 1$ we determine presentations for both, $\VTL_n (R^f)$ and $\ATL_n (R^a)$.
We determine sequences of homomorphisms $\{ \rho_n^f : R^f [\VB_n] \to \VTL_n (R^f) \}_{n=1}^\infty$ and $\{ \rho_n^a : R^a [\VB_n] \to \ATL_n (R^a) \}_{n=1}^\infty$, we determine Markov traces $\{T_n'^f: \VTL_n (R^f) \to R^f \}_{n=1}^\infty$ and $\{ T_n'^a : \ATL_n (R^a) \to R^a \}_{n=1}^\infty$, and we show that the invariants for virtual links obtained from these Markov traces are the $f$-polynomial for the first trace and the arrow polynomial for the second trace. 
We show that, for each $n \ge 1,$ the standard Temperley--Lieb algebra $\TL_n$ embeds into both, $\VTL_n (R^f)$ and $\ATL_n (R^a)$, and that the restrictions to $\{ \TL_n\}_{n=1}^\infty$ of the two Markov traces coincide.
\end{abstract}

\maketitle


\section{Introduction}\label{sec1}

Let $S_1,\dots,S_\ell$ be a collection of $\ell$ oriented circles smoothly immersed in the plane and having only normal double crossings.
We assign to each crossing a value ``positive'', ``negative'', or ``virtual'', that we indicate on the graphical representation of $S_1 \cup \cdots \cup S_\ell$ as in Figure \ref{fig1_1}.
Such a figure is called a \emph{virtual link diagram}.
We consider the equivalence relation on the set of virtual link diagrams generated by isotopy and the so-called \emph{Reidemeister virtual moves}, as described in Kauffman \cite{Kauff1,Kauff2}.
An equivalence class of virtual link diagrams is called a \emph{virtual link}.

\begin{figure}[ht!]
\begin{center}
\includegraphics[width=6cm]{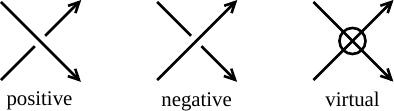}
\caption{Crossings in a virtual link diagram}\label{fig1_1}
\end{center}
\end{figure}

Let $b = (b_1, \dots, b_n)$ be a collection of $n$ smooth paths in the plane $\R^2$ satisfying the following properties.
\begin{itemize}
\item[(a)]
$b_{i}(0)=(0,i)$ for all $i\in\{1,\dots,n\}$, and there exists a permutation $w \in \SSS_n$ such that $b_{i}(1) = (1,w(i))$ for all $i \in \{1, \dots, n\}$.
\item[(b)]
Let $p_1: \R^2 \to \R$ be the projection on the first coordinate.
Then $p_1 (b_i(t)) = t$ for all $i \in \{1, \dots, n\}$ and all $t \in [0,1]$.
\item[(c)]
The union of the images of the $b_i$'s has only normal double crossings. 
\end{itemize}
As with virtual link diagrams, we assign to each crossing a value ``positive'', ``negative'', or ``virtual'', that we indicate on the graphical representation as in Figure \ref{fig1_1}.
Such a figure is called a \emph{virtual braid diagram} on $n$ strands. 
We consider the equivalence relation on the set of virtual braid diagrams on $n$ strands generated by isotopy and some \emph{Reidemeister virtual moves}, as described in Kauffman \cite{Kauff1}.
An equivalence class of virtual braid diagrams on $n$ strands is called a \emph{virtual braid} on $n$ strands.
The virtual braids on $n$ strands form a group, denoted $\VB_n$, called \emph{virtual braid group} on $n$ strands.
The group operation is induced by the concatenation.

We know from Kamada \cite{Kamad1} and Vershinin \cite{Versh1} that $\VB_n$ admits a presentation with generators $\sigma_1, \dots, \sigma_{n-1}, \tau_1, \dots, \tau_{n-1}$ and relations 
\begin{gather*}
\tau_i^2 = 1 \quad\text{for }1\le i \le n-1\,,\\
\sigma_i \sigma_j = \sigma_j \sigma_i\,,\ \tau_i \tau_j = \tau_j \tau_i\,,\ \tau_i \sigma_j = \sigma_j \tau_i \quad \text{for } |i-j| \ge 2\,,\\
\sigma_i \sigma_j \sigma_i = \sigma_j \sigma_i \sigma_j\,,\ \tau_i \tau_j \tau_i = \tau_j \tau_i\tau_j\,,\ \tau_i \tau_j \sigma_i = \sigma_j \tau_i \tau_j \quad \text{for } |i-j| = 1\,.
\end{gather*}
The generators $\sigma_i$ and $\tau_i$ are illustrated in Figure \ref{fig1_2}.

\begin{figure}[ht!]
\begin{center}
\includegraphics[width=4.2cm]{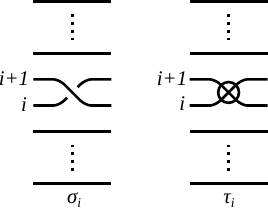}
\caption{Generators of $\VB_n$}\label{fig1_2}
\end{center}
\end{figure}

Note that the subgroup of $\VB_n$ generated by $\sigma_1, \dots, \sigma_{n-1}$ is the braid group $B_n$ on $n$ strands. 
On the other hand, $\VB_n$ may be viewed as a subgroup of $\VB_{n+1}$ via the monomorphism $\VB_n \hookrightarrow \VB_{n+1}$ which sends $\sigma_i$ to $\sigma_i$ and $\tau_i$ to $\tau_i$ for all $i \in \{1, \dots, n-1\}$.

Using the same procedure as for classic braids, we can close a virtual braid $\beta$ and obtain a virtual link, $\hat\beta$, called the \emph{closure} of $\beta$.
We know that each virtual link is the closure of a virtual braid, and we can say when two closed virtual braids are equivalent in terms of virtual Markov moves, as follows.

We denote by $\VB = \bigsqcup_{n=1}^\infty \VB_n$ the disjoint union of all virtual braid groups.
Let $\beta_1, \beta_2 \in \VB$.
We say that $\beta_1$ and $\beta_2$ are connected by a \emph{virtual Markov move} if we are in one of the following four cases. 
\begin{itemize}
\item[(a)]
There exist $n \ge 1$ and $\alpha \in \VB_n$ such that $\beta_1, \beta_2 \in \VB_n$ and $\beta_2 = \alpha \beta_1 \alpha^{-1}$.
\item[(b)]
There exist $n \ge 1$ and $u \in \{\sigma_n, \sigma_n^{-1}, \tau_n\}$ such that $\beta_1 \in \VB_n$, $\beta_2 \in \VB_{n+1}$ and $\beta_2 = \beta_1 u$, or vice versa.
\item[(c)]
There exists $n \ge 2$ such that $\beta_1 \in \VB_n$, $\beta_2 \in \VB_{n+1}$, and $\beta_2 = \beta_1 \sigma_n^{-1} \tau_{n-1} \sigma_n$, or vice versa.
\item[(d)]
There exists $n \ge 2$ such that $\beta_1 \in \VB_n$, $\beta_2 \in \VB_{n+1}$, and $\beta_2 = \beta_1 \tau_n \tau_{n-1} \sigma_{n-1} \tau_n \sigma_{n-1}^{-1} \tau_{n-1} \tau_n$, or vice versa.
\end{itemize}

\begin{thm}[Kamada \cite{Kamad1}, Kauffman--Lambropoulou \cite{KauLam1}]\label{thm1_1}
Let $\beta_1, \beta_2 \in \VB$.
Then $\hat \beta_1 = \hat \beta_2$ if and only if $\beta_1$ and $\beta_2$ are connected by a finite sequence of virtual Markov moves.
\end{thm}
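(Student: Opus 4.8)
The plan is to prove the two implications separately, treating the backward (``if'') direction as the substantive one.

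\textbf{The easy direction.} First I would verify that each of the four moves (a)--(d) sends $\beta_1$ to a braid whose closure is virtually isotopic to $\hat\beta_1$. For conjugation (a) the usual argument applies: in the closure of $\alpha\beta_1\alpha^{-1}$ the factors $\alpha$ and $\alpha^{-1}$ may be slid around the closure arcs and cancelled, which is an isotopy of the underlying diagram. For the stabilisation move (b), appending a single crossing $\sigma_n^{\pm1}$ or $\tau_n$ on a new strand produces, after closure, a diagram in which the extra strand meets the rest in exactly one crossing; this crossing is removed by a classical or virtual Reidemeister~I move, so the closure is unchanged. Moves (c) and (d) are the genuinely virtual stabilisations, and here I would simply draw the closures of $\beta_1\sigma_n^{-1}\tau_{n-1}\sigma_n$ and of $\beta_1\tau_n\tau_{n-1}\sigma_{n-1}\tau_n\sigma_{n-1}^{-1}\tau_{n-1}\tau_n$ and reduce each, using the detour move and Reidemeister~II/III moves together with $\tau_i^2=1$, to the closure of $\beta_1$. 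This is a finite diagrammatic check.

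\textbf{The hard direction.} For the converse I would first invoke an Alexander-type theorem for virtual links, namely that every virtual link is the closure of some virtual braid, which is part of the braiding theory already available here. Given $\beta_1,\beta_2$ with $\hat\beta_1=\hat\beta_2$, there is a finite sequence of isotopies and Reidemeister virtual moves carrying the diagram $\hat\beta_1$ to $\hat\beta_2$. The strategy is to follow this sequence and, at each stage, re-braid the intermediate diagram by a fixed braiding algorithm, recording the effect on the resulting braid word. The point to establish is that each elementary diagram move changes the braid only by a composition of moves (a)--(d). A clean way to organise this is through the \emph{L-move} calculus of Kauffman--Lambropoulou: one introduces L-moves, each of which cuts a strand of a braid and reconnects the two free ends to the top and bottom of the diagram with a new strand, in a classical or virtual fashion. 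I would show, first, that two virtual braids have isotopic closures if and only if they are related by braid isotopy, conjugation, and L-moves; this is proved by analysing how a generic isotopy and each Reidemeister virtual move interact with the braiding process, the only delicate case being a move that creates or destroys a crossing across the ``braid axis''. Second, I would show that every L-move can be expressed as a finite sequence of the stabilisation moves (b)--(d) together with conjugations, by an explicit local computation that uses the mixed relation $\tau_i\tau_j\sigma_i=\sigma_j\tau_i\tau_j$ to transport the inserted strand into the standard position at the right-hand end.

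The hard part will be the bookkeeping in this second step, that is, controlling the interaction of virtual crossings with the braiding algorithm. In the classical Markov theorem only a single type of stabilisation occurs, whereas here the virtual crossings force the two extra moves (c) and (d), and the crux is to prove that these, and no further moves, arise from the detour move applied near the axis. Concretely, one must check that whenever a virtual crossing is pushed across the closure, the resulting change of braid is realised by (c) or (d) up to conjugation, and that the forbidden-type configurations, which are \emph{not} relations in $\VB_n$, never need to be invoked. Verifying this exhaustively over the finite list of local pictures is the technical heart of the argument.
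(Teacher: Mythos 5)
This is a statement the paper itself does not prove: Theorem 1.1 is quoted from Kamada \cite{Kamad1} and Kauffman--Lambropoulou \cite{KauLam1}, so there is no internal proof to compare yours against. Judged against the cited sources, your outline is the right approach and essentially reproduces the Kauffman--Lambropoulou strategy: the easy direction by checking that each of the moves (a)--(d) preserves the closure up to detour and Reidemeister/virtual Reidemeister moves, and the hard direction by braiding (the virtual Alexander theorem), the one-move \emph{L-move} Markov theorem, and then the translation of classical and virtual L-moves into conjugations and the stabilisations (b)--(d). Your awareness that the forbidden moves are not relations in $\VB_n$ and must never be invoked is exactly the point where a naive adaptation of the classical argument would fail, and it is correctly flagged.

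That said, as a standalone proof your text is a scaffold rather than an argument: both load-bearing steps are asserted, not executed. Step one needs more than ``follow the sequence and re-braid'': you must prove that the braiding algorithm is well defined up to moves (a)--(d) (two braidings of the same diagram agree up to these moves), that each Reidemeister/virtual move and each planar isotopy changes the output only by L-moves, and --- a point your sketch silently assumes --- that braiding the already-closed diagram $\hat\beta_i$ returns $\beta_i$ itself up to the moves, so that the chain of comparisons actually starts and ends at $\beta_1$ and $\beta_2$. Step two, reducing each L-move to (b)--(d) plus conjugation, is a genuine computation in which the precise words in (c) and (d), namely $\sigma_n^{-1}\tau_{n-1}\sigma_n$ and $\tau_n\tau_{n-1}\sigma_{n-1}\tau_n\sigma_{n-1}^{-1}\tau_{n-1}\tau_n$, have to come out exactly; nothing in your sketch certifies that the detour analysis produces these words and no others. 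Since the paper itself delegates all of this to \cite{Kamad1,KauLam1}, deferring it is defensible here, but you should be clear that what you have written is a correct proof plan whose technical heart coincides with the content of those two papers, not a proof.
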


Let $R$ be a ring. 
For $n \ge 1$ we denote by $R [\VB_n]$ the group $R$-algebra of $\VB_n$.
Notice that, since $\VB_n$ is a subgroup of $\VB_{n+1}$, $R [\VB_n]$ is a subalgebra of $R[\VB_{n+1}]$.
A sequence $\{T_n: R [\VB_n] \to R\}_{n=1}^\infty$ of $R$-linear forms is called a \emph{Markov trace} if it satisfies the following properties. 
\begin{itemize}
\item[(a)]
$T_n (x y) = T_n (y x)$ for all $n \ge 1$ and all $x,y \in R[\VB_n]$.
\item[(b)]
$T_n (x) = T_{n+1} (x \sigma_n) = T_{n+1} (x \sigma_n^{-1}) = T_{n+1} (x \tau_n)$ for all $n \ge 1$ and all $x \in R [\VB_n]$.
\item[(c)]
$T_n (x) = T_{n+1} (x \sigma_n^{-1} \tau_{n-1} \sigma_n)$ for all $n \ge 2$ and all $x \in R [\VB_n]$.
\item[(d)]
$T_n (x) = T_{n+1} (x \tau_n \tau_{n-1} \sigma_{n-1} \tau_n \sigma_{n-1}^{-1} \tau_{n-1} \tau_n)$ for all $n \ge 2$ and all $x \in R [\VB_n]$.
\end{itemize}

Note that our definition of ``Markov trace'' is not the one that can be usually found in the literature (see Kauffman--Lambropoulou \cite{KauLam1}, for example), but all know definitions, including this one, are equivalent up to renormalization. 

Let $\VV\LL$ be the set of virtual links.
Thanks to Theorem \ref{thm1_1}, from a Markov trace $\{T_n: R [\VB_n] \to R\}_{n=1}^\infty$ we can define an invariant $I: \VV\LL \to R$ by setting $I (\hat \beta) = T_n (\beta)$ for all $n \ge 1$ and all $\beta \in \VB_n$.
Conversely, any invariant of virtual links with coefficients in $R$ can be obtained in this way. 

A \emph{tower of algebras} is a sequence $\{A_n\}_{n=1}^\infty$ of algebras such that $A_n$ is a subalgebra of $A_{n+1}$ for all $n \ge 1$.
A sequence $\{\rho_n: R[\VB_n] \to A_n\}_{n=1}^\infty$ of homomorphisms is said to be \emph{compatible} if the restriction of $\rho_{n+1}$ to $R[\VB_n]$ is equal to $\rho_n$ for all $n$.
Let $\{A_n\}_{n=1}^\infty$ be a tower of algebras and let $\{\rho_n: R[\VB_n] \to A_n\}_{n=1}^\infty$ be a compatible sequence of homomorphisms.
Set $S_i = \rho_n (\sigma_i)$ and $v_i = \rho_n (\tau_i)$ for all $i \in \{1, \dots, n-1\}$.
A sequence $\{T_n': A_n \to R\}_{n=1}^\infty$ of linear forms is a \emph{Markov trace} if it satisfies the following properties.
\begin{itemize}
\item[(a)]
$T_n'(xy)=T_n'(yx)$ for all $n \ge 1$ and all $x,y \in A_n$.
\item[(b)]
$T_n'(x)=T_{n+1}' (x S_n) = T_{n+1}'(x S_n^{-1}) = T_{n+1}' (x v_n)$ for all $n \ge 1$ and all $x \in A_n$. 
\item[(c)]
$T_n'(x)=T_{n+1}'(x S_n^{-1} v_{n-1} S_n)$ for all $n \ge 2$ and all $x \in A_n$.
\item[(d)]
$T_n' (x) = T_{n+1}' (x v_n v_{n-1} S_{n-1} v_n S_{n-1}^{-1} v_{n-1} v_n)$ for all $n \ge 2$ and all $x \in A_n$.
\end{itemize}
Clearly, in that case, the sequence $\{T_n = T_n' \circ \rho_n : R [\VB_n] \to R\}_{n=1}^\infty$ is a Markov trace, and therefore it determines an invariant for virtual links.

A ``natural'' strategy to build Markov traces on $\{K[\VB_n]\}_{n=1}^\infty$, and therefore invariants for virtual links, would be to transit through Markov traces on compatible towers of algebras, as defined above. 
This strategy won its spurs in the classical theory of knots and links, in particular thanks to Jones' definitions of the Jones polynomial \cite{Jones2} and of the HOMFLY-PT polynomial \cite{Jones1}.
As far as we know, this strategy is poorly used in the theory of virtual knots and links.
Actually, the only reference we found is Li--Lei--Li \cite{LiLeLi1}, where the authors define a tower of algebras in terms of diagrams, claim (with no proof) that their algebras are the same as the virtual Temperley--Lieb algebras of Zhang--Kauffman--Ge \cite{ZhKAGe1}, and show that the $f$-polynomial can be obtained from a Markov trace on this tower of algebras. 
By the way, one of their main results, \cite[Proposition 4.1]{LiLeLi1}, is wrong (see Proposition \ref{prop2_3}).

Our aim in the present paper is to describe two invariants for virtual links in terms of Markov traces: the $f$-polynomial, also known as the Jones--Kauffman polynomial, and the arrow polynomial.
The $f$-polynomial is a version of the Jones polynomial for virtual links defined from the Kauffman bracket.
This was introduced by Kauffman \cite{Kauff1} in his seminal paper on virtual knots and links, and its construction closely follows Kauffman's construction \cite{Kauff4} of the Jones polynomial for classical links.
The arrow polynomial is a refinement of the $f$-polynomial.
It coincides with the Jones polynomial on classical links, but it is much more powerful for (non-classical) virtual links.
In particular, it provides a lower bound for the number of virtual crossings.
It was constructed by Miyazawa \cite{Miyaz1} and Dye--Kauffman \cite{DyeKau1} (see also Kauffman \cite{Kauff3}).

Section \ref{sec2} is dedicated to the construction of a Markov trace associated with the $f$-polynomial.
Our approach is  close to that of Li--Lei--Li \cite{LiLeLi1}, but, on the one hand, our study of the $f$-polynomial is needed in our study of the arrow polynomial, and, on the other hand, we complete the study of Li--Lei--Li \cite{LiLeLi1} with correct presentations for virtual Temperley--Lieb algebras and other results.
For each $n \ge 1$ we define an algebra $\VTL_n (R^f)$ in terms of diagrams, so that the sequence $\{ \VTL_n (R^f)\}_{n=1}^\infty$ is a tower of algebras. 
In Proposition \ref{prop2_2} we determine a presentation for $\VTL_n (R^f)$ and in Proposition \ref{prop2_3} we show that the presentation for $\VTL_n (R^f)$ given in Li--Lei--Li \cite{LiLeLi1} is wrong. 
Actually, the relation $E_i E_j E_i= E_i$ for $|i-j|=1$, which is standard in Temperley--Lieb algebras, must be replaced by a ``virtual relation'' of the form $E_i v_j E_i = E_i$.
Then we determine a compatible sequence of homomorphisms $\{ \rho_n^f : R^f [\VB_n] \to \VTL_n (R^f) \}_{n=1}^\infty$ (Theorem \ref{thm2_5}), we determine a Markov trace on the tower of algebras $\{ \VTL_n (R^f) \}_{n=1}^\infty$ (Theorem \ref{thm2_6}), and we show that this construction leads to the $f$-polynomial (Theorem \ref{thm2_8}).

Section \ref{sec3} is dedicated to the arrow polynomial.
Our construction can be viewed as a labeled version of the construction of Section \ref{sec2}.
For each $n \ge 1$ we define an algebra $\ATL_n (R^a)$ in terms of labeled diagrams, so that $\{ \ATL_n (R^a)\}_{n=1}^\infty$ is a tower of algebras. 
Intuitively speaking, a label represents the number of cusps in Kauffman sense that can be found on an arc. 
In Proposition \ref{prop3_2} we determine a presentation for $\ATL_n (R^a)$.
This is a sort of labeled version of the presentation for $\VTL_n (R^f)$ given in Proposition \ref{prop2_2}.
Then we proceed as in Section \ref{sec2}: we determine a compatible sequence of homomorphisms $\{ \rho_n^a : R^a [\VB_n] \to \ATL_n (R^a) \}_{n=1}^\infty$ (Theorem \ref{thm3_4}), we determine a Markov trace on the tower of algebras $\{ \ATL_n (R^a) \}_{n=1}^\infty$ (Theorem \ref{thm3_5}), and we show that this construction leads to the arrow polynomial (Theorem \ref{thm3_9}).

It is known that the arrow polynomial coincides with the $f$-polynomial on classical links (see Miyazawa \cite{Miyaz1} and Dye--Kauffman \cite{DyeKau1}).
We show that this fact has an interpretation in terms of Markov traces on Temperley--Lieb algebras.
For $n \ge 1$ we denote by $\TL_n$ the $n$-th standard Temperley--Lieb algebra.
We show that $\TL_n$ embeds into both, $\VTL_n(R^f)$ (Proposition \ref{prop2_4}) and $\ATL_n (R^a)$ (Proposition \ref{prop3_3}), and that the restriction to $\{ \TL_n\}_{n=1}^\infty$ of the Markov trace on $\{ \VTL_n (R^f) \}_{n=1}^\infty$ coincides with the restriction to $\{ \TL_n\}_{n=1}^\infty$ of the Markov trace on $\{ \ATL_n (R^a) \}_{n=1}^\infty$ (Proposition \ref{prop3_7}).

\begin{acknow}
The first author is supported by the French project ``AlMaRe'' (ANR-19-CE40-0001-01) of the ANR.
\end{acknow}


\section{Virtual Temperley--Lieb algebras and f-polynomial}\label{sec2}

Two rings are involved in this section. 
The first is the ring $R_0^f = \Z [z]$ of polynomials in the variable $z$ with integer coefficients. The second is the ring $R^f = \Z [A^{\pm 1}]$ of Laurent polynomials in the variable $A$ with integer coefficients. 
We assume that $R_0^f$ is embedded into $R^f$ via the identification $z=-A^2-A^{-2}$.
Notice that the superscript $f$ over $R_0$ and $R$ in this notation is to underline the fact that all the constructions in the present section concern the $f$-polynomial. 
In contrast, the rings in the next section, which concerns the arrow polynomial, will be denoted $R_0^a$ and $R^a$.  

We start recalling the definition of the $f$-polynomial, as it will help the reader to understand the constructions and definitions that follow after.

Define the \emph{Kauffman bracket} $\langle L \rangle \in R^f$ of a (non-oriented) virtual link diagram $L$ as follows. 
If $L$ has only virtual crossings, then $\langle L \rangle = z^\ell = (-A^2-A^{-2})^\ell$, where $\ell$ is the number of components of $L$.
Suppose that $L$ has at least one non-virtual crossing $p$.
Then $\langle L \rangle = A\,\langle L_1 \rangle + A^{-1}\,\langle L_2 \rangle$, where $L_1$ and $L_2$ are identical to $L$ except in a neighborhood of $p$ where there are as shown in Figure \ref{fig2_1}.
The \emph{writhe} of an (oriented) virtual link diagram $L$, denoted $w (L)$, is the number of positive crossings minus the number of negative crossings. 
Then the \emph{$f$-polynomial} of an (oriented) virtual link diagram $L$ is $f(L) = f(L) (A) = (-A^3)^{-w(L)} \langle L \rangle$.

\begin{figure}[ht!]
\begin{center}
\includegraphics[width=6cm]{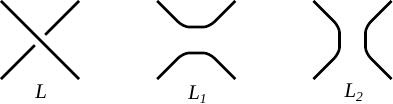}
\caption{Relation in the Kauffman bracket}\label{fig2_1}
\end{center}
\end{figure}

\begin{thm}[Kauffman \cite{Kauff1}]\label{thm2_1}
If two virtual link diagrams $L$ and $L'$ are equivalent, then $f(L)=f(L')$.
\end{thm}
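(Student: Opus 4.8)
The plan is to prove invariance of the full quantity $f(L) = (-A^3)^{-w(L)}\langle L\rangle$ by treating its two factors separately. First I would recast the Kauffman bracket as a state sum. Fix an ordering of the classical (non-virtual) crossings of $L$; a \emph{state} $s$ is a choice, for each such crossing, of one of the two smoothings of Figure \ref{fig2_1}. Smoothing all classical crossings according to $s$ yields a diagram $L_s$ with only virtual crossings, which is a disjoint union of $\ell(s)$ closed loops meeting only in virtual crossings. Iterating the recursion $\langle L\rangle = A\,\langle L_1\rangle + A^{-1}\,\langle L_2\rangle$ down to the purely virtual base case gives
\begin{equation*}
\langle L\rangle = \sum_s A^{\alpha(s)-\beta(s)}\,z^{\ell(s)},
\end{equation*}
where $\alpha(s)$ and $\beta(s)$ count the two types of smoothing used in $s$. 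The right-hand side is manifestly independent of the chosen ordering of the crossings, so the bracket is well defined. From the same formula one reads off the disjoint-circle relation $\langle \bigcirc \sqcup L\rangle = z\,\langle L\rangle$, since adjoining a loop that meets the rest only in virtual crossings raises every $\ell(s)$ by one.

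Next I would check how $\langle L\rangle$ behaves under each generating move. For the three virtual moves and the mixed (detour) move, the set of classical crossings, and hence the index set of states, is unchanged; moreover for each fixed state $s$ the purely virtual diagrams $L_s$ before and after the move are related by planar isotopy together with virtual moves on loops, and such moves preserve the number of components, so $\ell(s)$ is unchanged and the state sum is invariant. For the second Reidemeister move R2 one expands the four states of the two crossings and uses the disjoint-circle relation together with $z=-A^2-A^{-2}$ to collapse three of the terms, obtaining invariance of $\langle L\rangle$; R3 then follows by reducing it to R2 after smoothing the top crossing. Under the first Reidemeister move R1, smoothing the kink produces a disjoint circle in one branch and the unkinked diagram in the other; evaluating gives $\langle L\rangle = -A^{\pm 3}\,\langle L_0\rangle$, the sign of the exponent depending on the sign of the kink.

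Finally I would track the writhe. Since $w(L)$ counts signed classical crossings, it is unchanged by the virtual and mixed moves, by R2 (the two crossings cancel in sign), and by R3 (the crossings are merely rearranged), and it changes by exactly $\pm 1$ under R1, with the sign matching the exponent in the bracket computation above. Hence the correction factor $(-A^3)^{-w(L)}$ absorbs precisely the R1 discrepancy, and $f(L)=(-A^3)^{-w(L)}\langle L\rangle$ is invariant under every generating move; as the equivalence relation is generated by these moves and by isotopy, $f(L)=f(L')$ whenever $L$ and $L'$ are equivalent.

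The step I expect to be the main obstacle is the invariance of $\langle L\rangle$ under the virtual and mixed moves: the content there is not algebraic but combinatorial-topological, namely verifying that for every state the fully smoothed purely virtual diagrams on the two sides of the move have the same number of loop components. This requires a careful case analysis of how a smoothing interacts with the arcs and virtual crossings involved in each move, and it is exactly the point at which the purely virtual base case $z^\ell$ must be handled with care.
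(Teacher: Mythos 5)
Your proof is correct and is essentially the standard argument of Kauffman's original paper \cite{Kauff1}, which is exactly what this paper cites for Theorem \ref{thm2_1} without reproducing a proof: state-sum expansion of the bracket, combinatorial invariance of loop counts under the virtual and mixed moves, algebraic collapse for R2/R3, and absorption of the R1 factor $-A^{\pm 3}$ by the writhe normalization. Note only that you correctly adopted this paper's normalization $\langle L\rangle = z^{\ell}$ for purely virtual diagrams (rather than Kauffman's $z^{\ell-1}$), which changes $f$ by a global factor of $z$ but affects none of the invariance computations.
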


The \emph{$f$-polynomial} of a virtual link $L$, denoted $f(L)$, is the $f$-polynomial of any of its diagrams.
This is a well-defined invariant thanks to Theorem \ref{thm2_1}.

Our goal now is to define a Markov trace whose associated invariant is the $f$-polynomial. 
We proceed as indicated in the introduction: we pass through a tower of algebras, the tower of virtual Temperley--Lieb algebras.

Let $n\ge 1$ be an integer. 
A \emph{flat virtual $n$-tangle} is a collection of $n$ disjoint pairs in $\{0,1\}\times\{1,\dots,n\}$, that is, a partition of $\{0,1\}\times\{1,\dots,n\}$ into pairs.
Let $E=\{\alpha_1, \dots, \alpha_n\}$ be a flat virtual $n$-tangle.
Then we graphically represent $E$ on the plane by connecting the two ends of each $\alpha_i$ with an arc. 
For example, Figure \ref{fig2_2} represents the flat virtual $3$-tangle $\{\alpha_1, \alpha_2, \alpha_3\}$, where $\alpha_1=\{(0,1), (0,2)\}$, $\alpha_2=\{ (0,3), (1,2)\}$ and $\alpha_3=\{(1,3), (1,1)\}$.

\begin{figure}[ht!]
\begin{center}
\includegraphics[width=2.2cm]{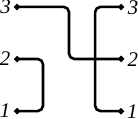}
\caption{Flat virtual tangle}\label{fig2_2}
\end{center}
\end{figure}

We denote by $\EE_n$ the set of flat virtual $n$-tangles, and by $\VTL_n$ the free $R_0^f$-module freely generated by $\EE_n$.
We define a multiplication in $\VTL_n$ as follows.
Let $E$ and $E'$ be two flat virtual $n$-tangles. 
By concatenating the diagrams of $E$ and $E'$ we get a family of closed curves and $n$ arcs. 
These $n$ arcs determine a partition of $\{0,1\}\times\{1,\dots,n\}$ into pairs, that is, a flat virtual $n$-tangle that we denote by $E*E'$.
Let $m$ be the number of obtained closed curves. 
Then we set $E\, E'=z^m\,(E*E')$.
It is easily checked that $\VTL_n$ endowed with this multiplication is an (unitary and associative) algebra that we call the $n$-th \emph{virtual Temperley--Lieb algebra}.

\begin{expl}
On the left hand side of Figure \ref{fig2_3} are illustrated the diagrams of two flat virtual $4$-tangles, $E$ and $E'$, and on the right hand side a diagram of $E*E'$.
In this example, by concatenating the diagrams of $E$ and $E'$ we get only one closed curve, hence $m=1$ and $E\,E'=z(E*E')$.
\end{expl}

\begin{figure}[ht!]
\begin{center}
\includegraphics[width=7.8cm]{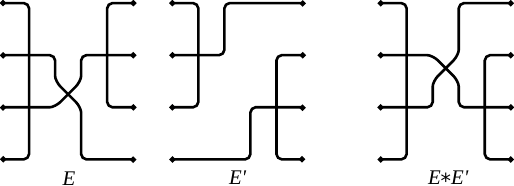}
\caption{Multiplication in $\VTL_n$}\label{fig2_3}
\end{center}
\end{figure}

\begin{rem}
It is easily seen that the embedding $\EE_n \to \EE_{n+1}$ which sends each $E \in \EE_n$ to $E \cup \{ \{ (0,n+1), (1,n+1) \} \}$ induces an injective homomorphism $\VTL_n \hookrightarrow \VTL_{n+1}$, for all $n \ge 1$.
So, we have a tower of algebras $\{ \VTL_n \}_{n=1}^\infty$.
\end{rem}

\begin{prop}\label{prop2_2}
Let $n\ge 2$.
Then $\VTL_n$ has a presentation with generators $E_1,\dots, E_{n-1},v_1, \dots, v_{n-1}$ and relations
\begin{gather*}
E_i^2=z E_i\,,\ v_i^2=1\,,\ E_iv_i=v_iE_i=E_i\,,\quad\text{for } 1 \le i \le n-1\,,\\
E_i E_j = E_j E_i\,,\ v_i v_j = v_j v_i \,,\ v_i E_j = E_j v_i\,,\quad\text{for } |i-j| \ge 2\,,\\
E_i v_j E_i = E_i\,,\ v_i v_j v_i = v_j v_i v_j\,,\ v_i v_j E_i = E_j v_i v_j \,,\quad \text{for } |i-j| = 1\,.
\end{gather*}
\end{prop}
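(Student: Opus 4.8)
The plan is to prove that the proposed presentation really does present $\VTL_n$ by the standard two-step method for diagram algebras: first verify that the generators and relations hold in $\VTL_n$ (so there is a well-defined surjection from the abstract presented algebra $P_n$ onto $\VTL_n$), and then prove injectivity by a normal-form / counting argument. Concretely, I would let $P_n$ denote the $R_0^f$-algebra given by the stated generators and relations, interpret each $E_i$ and $v_i$ as the evident flat virtual $n$-tangle (namely $E_i$ as the cup-cap tangle joining $(0,i)$--$(0,i+1)$ and $(1,i)$--$(1,i+1)$ with all other strands straight, and $v_i$ as the single virtual crossing between strands $i$ and $i+1$), and check each relation diagrammatically. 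The relations $v_i^2=1$, the braid/commutation relations among the $v_i$, and the mixed relations follow from the (detour) Reidemeister virtual moves; $E_i^2 = zE_i$ records the single closed loop created; $E_iv_i = v_iE_i = E_i$ is a planar isotopy absorbing a virtual crossing into a cap/cup; and the crucial relation $E_iv_jE_i = E_i$ for $|i-j|=1$ is checked by drawing the concatenation and observing that the intervening virtual crossing lets the strand slide through, producing exactly $E_i$ with no closed loop (this is precisely the ``virtual'' replacement for the classical relation $E_iE_jE_i=E_i$ flagged in the introduction). This yields a surjective homomorphism $\varphi:P_n\twoheadrightarrow\VTL_n$.

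The second and harder step is injectivity, i.e.\ showing $\dim_{R_0^f} P_n \le |\EE_n|$ so that $\varphi$ is forced to be an isomorphism. The strategy is to produce a spanning set for $P_n$ of size at most $|\EE_n|$. Here I would use the relations to rewrite an arbitrary word in the generators into a canonical form indexed by flat virtual $n$-tangles. Each flat virtual $n$-tangle decomposes into its underlying Temperley--Lieb (planar) connection pattern together with the permutation datum recording how the through-strands are virtually braided. Accordingly I would argue that every element of $P_n$ can be written as $z^k$ times a product $w_1\,E_{i_1}\cdots E_{i_r}\,w_2$, where the $E$-part is a reduced Temperley--Lieb word and the $w_j$ are words in the $v_i$ encoding the permutation, and that the relations collapse the number of such distinct reduced words to exactly the number of flat virtual $n$-tangles. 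The relations $v_iv_jv_i=v_jv_iv_j$, $v_i^2=1$ let the $v$-parts be chosen as coset representatives in $\SSS_n$, while $v_iv_jE_i=E_jv_iv_j$ and $E_iv_jE_i=E_i$ move and absorb virtual crossings past the cup-caps so that no ambiguity remains.

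The main obstacle I expect is precisely the bookkeeping in this normal-form argument: controlling the interaction between the virtual crossings (the $v_i$, which generate a copy of the symmetric-group relations) and the cup-cap generators (the $E_i$) so that one gets an exact count rather than merely a bound. The subtlety is that a flat virtual $n$-tangle is not simply a pair $(\text{TL diagram}, \text{permutation})$: the virtual crossings can be slid along arcs and past cups/caps, so different words in the $v_i$ may represent the same tangle, and one must show the relations account for \emph{all} such coincidences. I would handle this by setting up a bijection between normal forms and elements of $\EE_n$ directly—reading off from each flat virtual $n$-tangle a canonical word and checking, using the relations, that this word is invariant under the moves that preserve the tangle. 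Once the spanning set is shown to biject with $\EE_n$, the inequality $\dim P_n \le |\EE_n| = \dim \VTL_n$ combines with surjectivity of $\varphi$ to give the result.
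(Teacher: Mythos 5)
Your proposal is correct and follows essentially the same route as the paper: a surjective homomorphism $\varphi$ from the abstractly presented algebra, checked diagrammatically, followed by a rewriting argument producing a spanning set in bijection with $\EE_n$, of sandwich shape (permutation word)$\,\cdot\,$(cup--cap word)$\,\cdot\,$(permutation word). The paper realizes your sketch concretely by normalizing the middle part to $B_p = E_1 E_3 \cdots E_{2p-1}$ and taking the flanking virtual-permutation words from explicit coset-representative sets $U_{1,p}$, $U_{2,p}$ in $\SSS_n$, with the bookkeeping you anticipate (moving and absorbing virtual crossings via $v_iv_jE_i=E_jv_iv_j$, $E_iv_jE_i=E_i$, etc.) carried out in its Claims 2--5.
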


The generators $E_i$ and $v_i$ are illustrated in Figure \ref{fig2_4}.

\begin{figure}[ht!]
\begin{center}
\includegraphics[width=4.2cm]{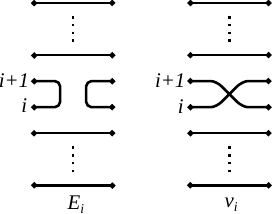}
\caption{Generators of $\VTL_n$}\label{fig2_4}
\end{center}
\end{figure}

\begin{proof}
Let $A_n$ be the algebra over $R_0^f$ defined by the presentation with generators $X_1, \dots, X_{n-1}, y_1, \dots,
\allowbreak
y_{n-1}$ and relations
\begin{gather*}
X_i^2=z X_i\,,\ y_i^2=1\,,\ X_iy_i=y_iX_i=X_i\,,\quad\text{for }1\le i \le n-1\,,\\
X_iX_j=X_jX_i\,,\ y_iy_j=y_jy_i\,,\ y_iX_j=X_jy_i\,,\quad\text{for }|i-j|\ge2\,,\\
X_iy_jX_i=X_i\,,\ y_iy_jy_i=y_jy_iy_j\,,\ y_iy_jX_i=X_jy_iy_j\,,\quad\text{for }|i-j|=1\,.
\end{gather*}
It is easily checked using diagrammatic calculation that there is a homomorphism $\varphi: A_n \to \VTL_n$ which sends $X_i$ to $E_i$ and $y_i$ to $v_i$ for all $i \in \{1,\dots,n-1\}$.
We need to prove that $\varphi$ is an isomorphism. 

{\it Claim 1.}
{\it The following relations hold in $A_n$.
\begin{gather*}
X_iX_jy_iy_j=X_i \quad\text{for }|i-j|=1\,,\\
X_iX_jX_i=X_i\quad\text{for }|i-j|=1\,,\\
X_iX_j=y_jy_iX_j\quad\text{for }|i-j|=1\,,\\
y_{i+1}y_{i+2}y_iy_{i+1}X_iX_{i+2}=X_iX_{i+2}\quad\text{for }1\le i\le n-3\,.
\end{gather*}}

{\it Proof of Claim 1.}
Let $i,j\in\{1,\dots,n-1\}$ such that $|i-j|=1$.
Then
\begin{gather*}
X_iX_jy_iy_j=
X_iy_iy_jX_i=
X_iy_jX_i=
X_i\,,\\
X_iX_jX_i=
X_iX_jy_iy_jy_jy_iX_i=
X_iy_jy_iX_i=
X_iy_jX_i=
X_i\,,\\
X_iX_j=
y_jy_iy_iy_jX_iX_j=
y_jy_iX_jy_iy_jX_j=
y_jy_iX_jy_iX_j=
y_jy_iX_j\,.
\end{gather*}
Let $i\in\{1,\dots,n-3\}$.
Then
\begin{gather*}
y_{i+1}y_{i+2}y_iy_{i+1}X_iX_{i+2}=
y_{i+1}y_{i+2}X_{i+1}X_iX_{i+2}=
X_{i+2}X_{i+1}X_iX_{i+2}=\\
X_{i+2}X_{i+1}X_{i+2}X_i=
X_{i+2}X_i=
X_iX_{i+2}\,.
\end{gather*}
This concludes the proof of Claim 1. 

We denote by $\UU(A_n)$ the group of units of $A_n$ and by $\SSS_n$ the $n$-th symmetric group. 
We have a homomorphism $\iota:\SSS_n\to\UU(A_n)$ which sends $s_i$ to $y_i$ for all $i\in\{1,\dots,n-1\}$, where $s_i=(i,i+1)$.
Let $n_0$ be the integer part of $\frac{n}{2}$.
Let $p\in\{0,1,\dots,n_0\}$. 
We set $B_p=X_1 X_3 \cdots X_{2p-1}$ if $p\neq 0$, and $B_p = B_0 = 1$ if $p=0$. 
We denote by $U_{1,p}$ the set of $w \in \SSS_n$ satisfying 
\begin{gather*}
w(1) < w(3) < \cdots < w(2p-1)\,,\\
w(2i-1) <w(2i) \quad \text{for } 1 \le i \le p\,,\\
w(2p+1) < w(2p+2) < \cdots < w(n)\,,
\end{gather*}
and we denote by $U_{2,p}$ the set of $w \in \SSS_n$ satisfying 
\begin{gather*}
w(1) < w(3) < \cdots <w(2p-1)\,,\\
w(2i-1) <w(2i) \quad \text{for } 1 \le i \le p\,.
\end{gather*}
Then we set 
\[
\BB_p = \{ \iota(w_1) \, B_p \, \iota(w_2^{-1}) \mid w_1 \in U_{1,p},\ w_2 \in U_{2,p} \}\,,
\]
for $0 \le p \le n_0$, and 
\[
\BB = \bigcup_{p=0}^{n_0} \BB_p\,.
\]

Let $E \in \EE_n$.
We can write $E$ in the form $E= \{ \alpha_1, \dots, \alpha_p, \alpha_1', \dots, \alpha_p',\beta_1, \dots, \beta_q\}$, where 
\begin{itemize}
\item
each $\alpha_i$ is of the form $\alpha_i= \{(0,a_i),(0,b_i)\}$, with $1 \le a_1 <a_2 < \cdots <a_p\le n$, and $a_i <b_i$ for all $i\in\{1,\dots,p\}$;
\item
each $\alpha_i'$ is of the form $\alpha_i'= \{(1,a_i'),(1,b_i')\}$, with $1 \le a_1'<a_2'<\cdots<a_p'\le n$, and $a_i'<b_i'$ for all $i\in\{1,\dots,p\}$;
\item
each $\beta_j$ is of the form $\beta_j=\{(0,c_j),(1,d_j)\}$, with $1 \le c_1<c_2<\cdots<c_q\le n$, and $2p+q=n$.
\end{itemize}
We define $w_1\in\SSS_n$ by $w_1(2i-1)=a_i$ and $w_1(2i)=b_i$ for all $i\in\{1,\dots,p\}$, and $w_1(2p+j)=c_j$ for all $j\in\{1,\dots,q\}$.
Similarly, we define $w_2\in\SSS_n$ by $w_2(2i-1)=a_i'$ and $w_2(2i)=b_i'$ for all $i\in\{1,\dots,p\}$, and $w_2(2p+j)=d_j$ for all $j\in\{1,\dots,q\}$.
We see that $w_1 \in U_{1,p}$, $w_2 \in U_{2,p}$, and $E=\varphi(\iota(w_1)\,B_p\,\iota(w_2^{-1}))$.
Moreover, such a form is unique for each $E \in \EE_n$, and we have $\varphi(Y) \in \EE_n$ for all $Y \in \BB$, hence $\varphi$ restricts to a bijection from $\BB$ to $\EE_n$.

So, in order to prove Proposition \ref{prop2_2}, it suffices to show that $\BB$ spans $A_n$ as a $R_0^f$-module.
We denote by $\MM$ the submonoid of $A_n$ generated by $X_1, \dots, X_{n-1}, y_1, \dots, y_{n-1}$, that is, the set of finite products of elements in $\{X_1, \dots, X_{n-1}, y_1, \dots, y_{n-1} \}$.
By definition $\MM$ spans $A_n$ as a $R_0^f$-module, hence we only need to show that $\MM$ is contained in the $R_0^f$-submodule $\Span_{R_0^f} (\BB)$ of $A_n$ spanned by $\BB$.

{\it Claim 2.}
{\it Let $w_1, w_2 \in \SSS_n$ and $p \in \{0, 1, \dots, n_0\}$.
Then $\iota (w_1) \, B_p\, \iota(w_2^{-1}) \in \BB$.}

{\it Proof of Claim 2.}
Let $i \in \{1, \dots, p\}$ such that  $w_1 (2i-1) = b_i > w_1(2i) = a_i$.
By applying the relation $y_{2i-1} X_{2i-1} = X_{2i-1}$ we can replace $w_1$ with $w_1 s_{2i-1}$, and then $w_1 (2i-1) = a_i < w_1 (2i) = b_i$.
So, we can assume that $w_1 (2i-1) < w_1 (2i)$ for all $i \in \{1, \dots, p\}$.
Let $i \in \{1, \dots, p-1\}$ such that $w_1 (2i-1) = a_{i+1} > w_1 (2i+1) = a_i$.
By applying the relation $y_{2i} y_{2i-1} y_{2i+1} y_{2i} X_{2i-1} X_{2i+1} = X_{2i-1} X_{2i+1}$ we can replace $w_1$ with $w_1 s_{2i} s_{2i-1} s_{2i+1} s_{2i}$, and then we have $w_1 (2i-1) = a_i < w_1 (2i+1) = a_{i+1}$ while keeping the inequalities $w_1 (2i-1) < w_1 (2i)$ and $w_1 (2i+1) < w_1 (2i+2)$.
So, we can also assume that $w_1 (1) < w_1 (3) < \cdots <w_1 (2p-1)$.
Set $q = n-2p$.
Let $j \in \{1, \dots, q-1\}$ such that $w_1 (2p+j) = c_{j+1} >w_1 (2p+j+1) = c_j$.
By applying the relations $y_{2p+j} X_{2i-1} = X_{2i-1} y_{2p+j}$ for $i \in \{1, \dots, p\}$ we can replace $w_1$ with $w_1 s_{2p+j}$ and $w_2$ with $w_2 s_{2p+j}$, and then $w_1 (2p+j) = c_j <w_1 (2p+j+1) = c_{j+1}$.
So, we can also assume that $w_1 (2p+1) < w_1 (2p+2) < \cdots < w_1 (n)$, that is, $w_1 \in U_{1,p}$.
We use the same argument to show that $w_2$ can be replaced with some $w_2' \in U_{2,p}$.
So, $\iota (w_1) \, B_p \,\iota(w_2^{-1}) \in \BB$.
This concludes the proof of Claim 2. 

{\it Claim 3.}
{\it Let $a, b \in \{1,\dots, n-1\}$, $a \le b$, and $p \in \{0, 1, \dots, n_0\}$.
Then $X_a y_{a+1} \cdots y_b B_p \in \Span_{R_0^f} (\BB)$.}

{\it Proof of Claim 3.}
Suppose $a \ge 2p+1$ (which is always true if $p=0$). 
Then 
\begin{gather*}
X_a y_{a+1} \cdots y_b B_p=
X_a B_p y_{a+1} \cdots y_b=\\
X_a (y_{a-1} y_a) (y_{a-2} y_{a-1}) \cdots (y_{2p+1} y_{2p+2}) (y_{2p+2} y_{2p+1}) \cdots (y_{a-1} y_{a-2}) (y_a y_{a-1}) B_p y_{a+1} \cdots y_b =\\
(y_{a-1} y_a) (y_{a-2} y_{a-1}) \cdots (y_{2p+1} y_{2p+2}) X_{2p+1} B_p (y_{2p+2} y_{2p+1}) \cdots (y_{a-1} y_{a-2}) (y_a y_{a-1}) y_{a+1} \cdots y_b =\\
(y_{a-1} y_a) (y_{a-2} y_{a-1}) \cdots (y_{2p+1} y_{2p+2}) B_{p+1} (y_{2p+2} y_{2p+1}) \cdots (y_{a-1} y_{a-2}) (y_a y_{a-1}) y_{a+1} \cdots y_b =\\
\iota(w_1)\, B_{p+1} \iota(w_2^{-1})
\in \BB\,,
\end{gather*}
where $w_1 = (s_{a-1} s_{a}) \cdots (s_{2p+1} s_{2p+2})$ and $w_2 = s_b \cdots s_{a+1} (s_{a-1} s_{a}) \cdots (s_{2p+1} s_{2p+2})$.
Suppose $a \le 2p$, $a$ is odd, and $a=b$. 
Let $c \in \{1, \dots, p\}$ such that $a=2c-1$.
Then 
\begin{gather*}
X_a y_{a+1} \cdots y_b B_p =
X_{2c-1}^2 X_1 \cdots X_{2c-3} X_{2c+1} \cdots X_{2p-1} =\\
zX_{2c-1} X_1 \cdots X_{2c-3} X_{2c+1} \cdots X_{2p-1} =
z B_p \in \Span_{R_0^f} (\BB)\,.
\end{gather*}
Suppose $a \le 2p$, $a$ is odd, and $a < b$. 
Let $c \in \{1, \dots, p\}$ such that $a=2c-1$.
Then 
\begin{gather*}
X_a y_{a+1} \cdots y_b B_p =
X_{2c-1} y_{2c} \cdots y_b X_{2c-1} X_1 \cdots X_{2c-3} X_{2c+1} \cdots X_{2p-1} =\\
X_{2c-1} y_{2c} X_{2c-1} y_{2c+1} \cdots y_b X_1 \cdots X_{2c-3} X_{2c+1} \cdots X_{2p-1} =\\
X_{2c-1} y_{2c+1} \cdots y_b X_1 \cdots X_{2c-3} X_{2c+1} \cdots X_{2p-1} =\\
y_{2c+1} \cdots y_b X_1 \cdots X_{2c-3} X_{2c-1} X_{2c+1} \cdots X_{2p-1} =
\iota(s_{2c+1} \cdots s_b)\, B_p
\in \BB\,.
\end{gather*}
Suppose $a \le 2p$ and $a$ is even.  
Let $c \in \{1, \dots, p\}$ such that $a=2c$.
Then 
\begin{gather*}
X_a y_{a+1} \cdots y_b B_p =
X_{2c} y_{2c+1} \cdots y_b X_{2c-1} X_1 \cdots X_{2c-3} X_{2c+1} \cdots X_{2p-1} =\\
X_{2c} X_{2c-1} y_{2c+1} \cdots y_b X_1 \cdots X_{2c-3} X_{2c+1} \cdots X_{2p-1} =\\
y_{2c-1} y_{2c} X_{2c-1} y_{2c+1} \cdots y_b X_1 \cdots X_{2c-3} X_{2c+1} \cdots X_{2p-1} =\\ 
y_{2c-1} y_{2c} y_{2c+1} \cdots y_b X_1 \cdots X_{2c-3} X_{2c-1} X_{2c+1} \cdots X_{2p-1} = 
\iota (s_{2c-1} s_{2c} s_{2c+1} \cdots s_b)\, B_p \in \BB\,.
\end{gather*}
This concludes the proof of Claim 3.

{\it Claim 4.}
{\it Let $p \in \{0, 1, \dots, n_0\}$ and $w \in \SSS_n$.
Then $X_1 \, \iota(w) \, B_p \in \Span_{R_0^f} (\BB)$.}

{\it Proof of Claim 4.}
There exist $a \in \{1, \dots, n-1\}$, $b \in \{0, 1, \dots, n-1\}$ and $w_1 \in \langle s_3, \dots,s_{n-1} \rangle$ such that $w=w_1 s_2 s_3 \cdots s_a s_1 s_2 \cdots s_b$.
Suppose $a \le b$.
Then
\begin{gather*}
X_1 \, \iota(w) \, B_p =
\iota(w_1)\, X_1 y_2 \cdots y_a y_1 \cdots y_{a-1} y_a y_{a+1} \cdots y_b B_p = \\
\iota(w_1) \, X_1 (y_2 y_1) (y_3 y_2) \cdots (y_a y_{a-1}) y_a y_{a+1} \cdots y_b B_p = \\
\iota(w_1)\,(y_2 y_1)(y_3 y_2)\cdots (y_a y_{a-1}) X_a y_a y_{a+1} \cdots y_b B_p = \\
\iota(w_1 (s_2 s_1) (s_3 s_2) \cdots (s_a s_{a-1})) \, X_a y_{a+1} \cdots y_b B_p\,.
\end{gather*}
We know by Claim 3 that $X_a y_{a+1} \cdots y_b B_p \in \Span_{R_0^f} (\BB)$, hence, by Claim 2, $X_1\,\iota(w)\,B_p \in \Span_{R_0^f} (\BB)$.
Suppose $a > b$.
Then 
\begin{gather*}
X_1\,\iota(w)\,B_p=
\iota(w_1)\,X_1 y_2 \cdots y_b y_{b+1} \cdots y_a y_1 \cdots y_b B_p = \\
\iota(w_1)\,X_1 (y_2 y_1) (y_3 y_2) \cdots (y_{b+1} y_b) y_{b+2} \cdots y_a B_p = \\
\iota(w_1)\,(y_2 y_1) (y_3 y_2) \cdots (y_{b+1} y_b) X_{b+1} y_{b+2} \cdots y_a B_p = \\
\iota(w_1 (s_2 s_1) (s_3 s_2) \cdots (s_{b+1} s_b))\, X_{b+1} y_{b+2} \cdots y_a B_p\,.
\end{gather*}
We know by Claim 3 that $X_{b+1} y_{b+2} \cdots y_a B_p \in \Span_{R_0^f} (\BB)$, hence, by Claim 2, $X_1\,\iota(w)\,B_p \in \Span_{R_0^f} (\BB)$. 
This concludes the proof of Claim 4. 

As pointed out above, the following claim ends the proof of Proposition \ref{prop2_2}.

{\it Claim 5.}
{\it The monoid $\MM$ is contained in $\Span_{R_0^f} (\BB)$.}

{\it Proof of Claim 5.}
Let $Y \in \MM$.
By using the relations $y_i y_j X_i = X_j y_i y_j$ for $|i-j| = 1$, we see that $Y$ can be written in the form $Y=\iota(w_0)\,X_1\,\iota(w_1)\,X_1\,\cdots X_1\,\iota(w_k)$ where $k \ge 0$ and $w_0, w_1,\dots,w_k \in \SSS_n$.
We prove that $Y \in \Span_{R_0^f} (\BB)$ by induction on $k$.
The case $k = 0$ is trivial and the case $k = 1$ follows from Claim 2.
So, we can assume that $k \ge 2$ and that the inductive hypothesis holds. 
By the inductive hypothesis $\iota(w_1) \, X_1 \cdots X_1 \, \iota (w_k) \in \Span_{R_0^f} (\BB)$, 
thus we just need to prove that $\iota (w_0) X_1 \iota (w_1') B_p \iota(w_2'^{-1}) \in \Span_{R_0^f} (\BB)$ for all $p \in \{0, 1, \dots, n_0\}$ and all $w_1', w_2' \in \SSS_n$.
We know by Claim 4 that $X_1 \iota (w_1') B_p \in \Span_{R_0^f} (\BB)$, hence, by Claim 2, $\iota (w_0) X_1 \iota (w_1') B_p \iota(w_2'^{-1}) \in \Span_{R_0^f} (\BB)$.
This concludes the proof of Claim 5.
\end{proof}

We have seen that the relation $E_iE_jE_i=E_i$ holds for $|i-j|=1$ in $\VTL_n$ (see Claim 1 in the proof of Proposition \ref{prop2_2}).
However, contrary to what some people may believe, we cannot replace the relation $E_iv_jE_i=E_i$ with the relation $E_iE_jE_i=E_i$ in the presentation of $\VTL_n$. 
Indeed:

\begin{prop}\label{prop2_3}
Let $n \ge 3$ and let $\VTL_n'$ be the algebra over $R_0^f$ defined by the presentation with generators $E_1', \dots, E_{n-1}', v_1', \dots, v_{n-1}'$ and relations
\begin{gather*}
E_i'^2 =z E_i'\,,\ v_i'^2=1\,,\ E_i'v_i'=v_i'E_i'=E_i'\,,\quad\text{for }1\le i \le n-1\,,\\
E_i'E_j'=E_j'E_i'\,,\ v_i'v_j'=v_j'v_i'\,,\ v_i'E_j'=E_j'v_i'\,,\quad\text{for }|i-j|\ge2\,,\\
E_i'E_j'E_i'=E_i'\,,\ v_i'v_j'v_i'=v_j'v_i'v_j'\,,\ v_i'v_j'E_i'=E_j'v_i'v_j'\,,\quad\text{for }|i-j|=1\,.
\end{gather*} 
Let $\varphi: \VTL_n' \to \VTL_n$ be the homomorphism that sends $E_i'$ to $E_i$ and $v_i'$ to $v_i$ for all $i \in \{1, \dots, n-1\}$.
Then $\varphi$ is surjective but not injective.
\end{prop}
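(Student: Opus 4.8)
The plan is to establish the three assertions separately—$\varphi$ well defined, $\varphi$ surjective, $\varphi$ not injective—the last being the substantial one.

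First I would check that $\varphi$ is a well-defined homomorphism. Every defining relation of $\VTL_n'$ other than $E_i'E_j'E_i'=E_i'$ (for $|i-j|=1$) is literally one of the defining relations of $\VTL_n$ listed in Proposition \ref{prop2_2}, hence holds among the $E_i,v_i$. The one remaining relation, $E_iE_jE_i=E_i$ for $|i-j|=1$, holds in $\VTL_n$ by Claim 1 in the proof of Proposition \ref{prop2_2}. So $\varphi$ is well defined. Surjectivity is then immediate, since by Proposition \ref{prop2_2} the elements $E_1,\dots,E_{n-1},v_1,\dots,v_{n-1}$ generate $\VTL_n$ and each lies in the image of $\varphi$.

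For non-injectivity I would exhibit an explicit kernel element. The natural candidate is $\xi=E_1'v_2'E_1'-E_1'$: since $E_iv_jE_i=E_i$ (for $|i-j|=1$) is one of the defining relations of $\VTL_n$, we have $\varphi(\xi)=E_1v_2E_1-E_1=0$. Thus it suffices to prove $\xi\neq 0$, that is, that $E_1'v_2'E_1'=E_1'$ does \emph{not} follow from the defining relations of $\VTL_n'$. A preliminary computation in $\VTL_n'$ makes this plausible: using $E_2'=v_1'v_2'E_1'v_2'v_1'$ (from $v_1'v_2'E_1'=E_2'v_1'v_2'$) together with $E_1'v_1'=v_1'E_1'=E_1'$, the relation $E_1'E_2'E_1'=E_1'$ rewrites as $E_1'v_2'E_1'v_2'E_1'=E_1'$. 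So the relations of $\VTL_n'$ only force the weaker, ``second-order'' identity $E_1'v_2'E_1'v_2'E_1'=E_1'$, and I expect the first-order identity to fail genuinely.

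To prove it fails I would extend scalars along $R_0^f=\Z[z]\to\R$, $z\mapsto 0$, and produce an $\R$-linear representation of the specialized algebra separating $E_1'v_2'E_1'$ from $E_1'$. Put $V=\R^2$ and let $\VTL_n'$ act on $W=V^{\otimes n}$ by sending $v_i'$ to the transposition $P_i$ of the $i$-th and $(i+1)$-st tensor factors and $E_i'$ to the cup--cap operator $U_i$ on those two factors built from a symmetric $2\times2$ cup matrix $G$ and a symmetric $2\times2$ cap matrix $H$ (so $U(e_a\otimes e_b)=H_{ab}\sum_{k,l}G_{kl}\,e_k\otimes e_l$). Symmetry of $G,H$ gives $v_i'E_i'=E_i'v_i'=E_i'$; the $P_i$ realize $\SSS_n$, so the $v$-relations hold; operators on disjoint factors commute, giving the $|i-j|\ge2$ relations; and the conjugation identity $P_iP_{i+1}U_iP_{i+1}P_i=U_{i+1}$ (valid since $s_is_{i+1}$ carries $\{i,i+1\}$ to $\{i+1,i+2\}$ and $G,H$ are symmetric) yields the mixed relations $v_i'v_j'E_i'=E_j'v_i'v_j'$ for $|i-j|=1$. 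A direct computation on factors $1,2,3$ shows, with $M=GH\in\mathrm{End}(V)$, that $E_1'E_2'E_1'=E_1'$ holds exactly when $M^2=\mathrm{Id}$, while $E_1'v_2'E_1'=E_1'$ holds exactly when $M=\mathrm{Id}$. Hence any symmetric $G,H$ with $GH$ a non-identity involution, for instance $G=\mathrm{Id}$ and $H=\left(\begin{smallmatrix}0&1\\1&0\end{smallmatrix}\right)$ (whence $z=\mathrm{tr}(GH)=0$, consistent with the chosen specialization), defines a genuine representation of $\VTL_n'$ in which $E_1'v_2'E_1'\neq E_1'$; this forces $\xi\neq0$. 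The main obstacle is precisely the construction of this representation: the relations $E_iE_jE_i=E_i$ and $E_iv_jE_i=E_i$ are diagrammatically so close that the obvious models (e.g.\ the symmetric loop model with $G=H=\mathrm{Id}$, where $M=\mathrm{Id}$) satisfy both and factor through $\VTL_n$, detecting nothing. The crux is to see that the two relations are governed by $M^2=\mathrm{Id}$ versus $M=\mathrm{Id}$ and to choose a cup--cap pair separating them while still respecting $v_i'E_i'=E_i'$.
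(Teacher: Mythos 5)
Your proof is correct, but your non-injectivity argument takes a genuinely different route from the paper's. Both arguments detect the same kernel element $E_1'v_2'E_1'-E_1'$ (well-definedness of $\varphi$ and surjectivity are handled identically), but where you build a $2^n$-dimensional cup--cap representation of $\VTL_n'$ on $(\R^2)^{\otimes n}$ at the specialization $z\mapsto 0$, the paper simply writes down a rank-one character: a ring homomorphism $\theta:\VTL_n'\to\Z[C_2]$ with $\theta(E_i')=-1$, $\theta(v_i')=1$, $\theta(z)=-1$, for which every defining relation is a trivial sign check, and then observes $\theta(E_i'v_j'E_i')=1\neq-1=\theta(E_i')$. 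The paper's argument is a three-line verification; yours requires checking the full list of relations for the operators $P_i$, $U_i$ (which does work: symmetry of $G,H$ gives $P_iU_i=U_iP_i=U_i$ and the conjugation identity yielding $v_iv_jE_i=E_jv_iv_j$, and with $G=\mathrm{Id}$, $H=\left(\begin{smallmatrix}0&1\\1&0\end{smallmatrix}\right)$ one gets $U_iU_jU_i=U_i$ from $M^2=\mathrm{Id}$ while $U_1P_2U_1\neq U_1$ since $M\neq\mathrm{Id}$, consistently with $z=\mathrm{tr}(GH)=0$). What your heavier construction buys is structural insight the character cannot give: your rewriting $E_1'E_2'E_1'=E_1'\Leftrightarrow E_1'v_2'E_1'v_2'E_1'=E_1'$, mirrored by the dichotomy $M^2=\mathrm{Id}$ versus $M=\mathrm{Id}$, explains \emph{why} the relation $E_iv_jE_i=E_i$ is strictly stronger than $E_iE_jE_i=E_i$, whereas the paper's $\theta$ only certifies that it is. One small caveat: your phrase ``$E_1'E_2'E_1'=E_1'$ holds exactly when $M^2=\mathrm{Id}$'' is literally correct for invertible $G$ (in general the condition is $(HG)^2=\mathrm{Id}$, conjugate to $M^2=\mathrm{Id}$ when $G$ is invertible), but this is harmless since you only use the specific pair $G=\mathrm{Id}$, $H$ as above.
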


\begin{proof}
By definition $E_1, \dots, E_{n-1}, v_1, \dots, v_{n-1}$ belong to the image of $\varphi$.
Since these elements generate $\VTL_n$, the homomorphism $\varphi$ is surjective.
Let $C_2=\{\pm 1\}$ be the cyclic group of order $2$ and let $\Z[C_2]$ be the group algebra of $C_2$.
It is easily checked with the presentation of $\VTL_n'$ that there is a ring homomorphism $\theta: \VTL_n' \to \Z [C_2]$ satisfying $\theta (E_i') = -1$ for all $i \in \{1, \dots, n-1\}$, $\theta (v_i') = 1$ for all $i \in \{1, \dots, n-1\}$, and $\theta (z) = -1$.
Let $i, j \in \{1, \dots, n-1\}$ such that $|i-j|=1$.
Then $\theta(E_i' v_j' E_i') = 1$ and $\theta (E_i') = -1$, hence the relation $E_i' v_j' E_i' = E_i'$ does not hold in $\VTL_n'$.
So, $\varphi$ is not injective.
\end{proof}

Let $n \ge 1$.
Recall that $V_n= \{0, 1\} \times \{1, \dots, n\}$ is ordered by $(0,1)< (0,2) < \cdots < (0,n) < (1,n) < \cdots <(1,2) < (1,1)$.
Let $E$ be a flat virtual $n$-tangle. 
Let $\gamma_1 = \{x_1, y_1\}, \gamma_2=\{x_2, y_2\}\in E$ such that $x_1 < y_1$, $x_2 < y_2$, and $x_1 < x_2$.
We say that $\gamma_1$ \emph{crosses} $\gamma_2$ if $x_1 <x_2 <y_1 <y_2$.
We say that $E$ is \emph{non-crossing} if there are no two elements in $E$ that cross. 
Equivalently, a flat $n$-tangle is non-crossing if and only if it has a graphical representation with $n$ disjoint arcs. 
We denote by $\EE_n^0$ the set of non-crossing flat virtual $n$-tangles. 

Let $n \ge 2$.
Recall that the \emph{Temperley--Lieb algebra} $\TL_n$ is the algebra over $R_0^f$ defined by the presentation with generators $E_1, \dots, E_{n-1}$ and relations 
\begin{gather*}
E_i^2 = z E_i \text{ for } 1 \le i \le n-1\,,\
E_i E_j = E_j E_i \text{ for }|i-j| \ge 2\,,\\
E_i E_j E_i = E_i \text{ for } |i-j|=1\,.
\end{gather*} 
The following is proved in Kauffman \cite{Kauff4}.

\begin{prop}[Kauffman \cite{Kauff4}]\label{prop2_4}
Let $n \ge 2$.
The homomorphism $\TL_n \to \VTL_n$ which sends $E_i$ to $E_i$ for all $i \in \{1, \dots, n-1\}$ is injective and its image is the $R_0^f$-submodule of $\VTL_n$ freely generated by $\EE_n^0$.
\end{prop}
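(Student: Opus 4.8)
The plan is to factor the statement into three parts: (i) the homomorphism exists; (ii) its image is $\Span_{R_0^f}(\EE_n^0)$; (iii) it is injective. First I would check that there is a well-defined homomorphism $\psi\colon\TL_n\to\VTL_n$ sending $E_i$ to $E_i$. For this it suffices that the three defining relations of $\TL_n$ hold among $E_1,\dots,E_{n-1}$ in $\VTL_n$. The relations $E_i^2=zE_i$ and $E_iE_j=E_jE_i$ for $|i-j|\ge 2$ are part of the presentation obtained in Proposition \ref{prop2_2}, and the remaining relation $E_iE_jE_i=E_i$ for $|i-j|=1$ was already verified in $\VTL_n$ in Claim 1 of the proof of Proposition \ref{prop2_2}. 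Thus $\psi$ is defined, and its image is the subalgebra of $\VTL_n$ generated by $E_1,\dots,E_{n-1}$.

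Next I would identify this image with $\Span_{R_0^f}(\EE_n^0)$. On one hand, the identity tangle and every $E_i$ lie in $\EE_n^0$, and the concatenation $E*E'$ of two non-crossing tangles is again non-crossing (draw $E$ and $E'$ each with $n$ disjoint arcs and glue the two boxes along their common side; deleting the closed curves leaves $n$ disjoint arcs). Since $EE'=z^m(E*E')$, the submodule $\Span_{R_0^f}(\EE_n^0)$ is a subalgebra, so it contains $\mathrm{Im}\,\psi$. On the other hand, every non-crossing tangle is a product of the $E_i$ (the classical fact that planar Temperley--Lieb diagrams are generated by the cup-cap tangles $E_i$; it can be reproved by the reduction procedure used for Claims 3--5 in the proof of Proposition \ref{prop2_2}, applied to words in the $E_i$ only). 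Hence $\Span_{R_0^f}(\EE_n^0)\subseteq\mathrm{Im}\,\psi$, and the two coincide. In particular the image is free on $\EE_n^0$, since $\EE_n^0\subseteq\EE_n$ is part of the defining basis of $\VTL_n$.

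For injectivity I would exhibit a family $\{M_D\}_{D\in\EE_n^0}$ of monomials in the $E_i$ inside $\TL_n$ such that $\psi(M_D)=D$ and $\{M_D\}$ spans $\TL_n$ as an $R_0^f$-module; such a family is exactly a set of planar normal forms, whose existence is the classical structure theory of $\TL_n$ (Kauffman \cite{Kauff4}), or can again be produced by the reduction of the previous paragraph. Granting this, any relation $\sum_D c_D M_D=0$ in $\TL_n$ maps under $\psi$ to $\sum_D c_D D=0$ in $\VTL_n$, forcing all $c_D=0$ because $\EE_n^0$ is linearly independent; so $\{M_D\}$ is a basis of $\TL_n$, $\psi$ carries it bijectively onto the basis $\EE_n^0$ of its image, and therefore $\psi$ is an isomorphism onto $\Span_{R_0^f}(\EE_n^0)$, in particular injective. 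Equivalently, one may observe that the multiplication $\VTL_n$ induces on $\Span_{R_0^f}(\EE_n^0)$ is precisely the stacking of planar diagrams weighted by $z$ per closed loop, so that $\Span_{R_0^f}(\EE_n^0)$ is Kauffman's Temperley--Lieb diagram algebra and $\psi$ is his defining isomorphism.

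The main obstacle is the normalization bookkeeping hidden in these ``classical'' inputs: one must check that reducing an arbitrary word in the $E_i$ by the three Temperley--Lieb relations terminates at $z^m$ times a unique non-crossing tangle, and that the exponent $m$ produced by the relations agrees with the number of closed loops counted by the multiplication of $\VTL_n$. Once this loop-count matching and the resulting bijection between planar normal forms and $\EE_n^0$ are established, the injectivity and the description of the image are purely formal consequences of the linear independence of $\EE_n^0$ in $\VTL_n$.
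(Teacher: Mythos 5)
The paper gives no proof of this proposition at all: it is stated as a quotation of Kauffman \cite{Kauff4} (``The following is proved in Kauffman''), so there is nothing internal to compare against. Your outline is a correct reconstruction of that classical argument --- existence of the map from the relations already verified in $\VTL_n$, identification of the image with $\Span_{R_0^f}(\EE_n^0)$ since planarity is preserved under stacking and every non-crossing tangle is a product of the $E_i$, and injectivity by lifting each $D\in\EE_n^0$ to a monomial $M_D$ and using that the $\{M_D\}$ span $\TL_n$ while $\EE_n^0$ is linearly independent in $\VTL_n$ --- and the two classical inputs you defer to (spanning by planar normal forms and the loop-count bookkeeping) are exactly the content of the cited result, which you flag explicitly, so there is no gap relative to the paper's own standard.
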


Recall that $R^f = \Z [A^{\pm 1}]$ denotes the algebra of Laurent polynomials in the variable $A$, and that $R_0^f = \Z [z]$ is a subalgebra of $R^f$ via the identification $z = -A^2 -A^{-2}$.
For each $n \ge 1$ we set $\VTL_n (R^f) = R^f \otimes \VTL_n$.
This is a $R^f$-algebra and it is a free $R^f$-module freely generated by $\EE_n$.

\begin{thm}\label{thm2_5}
Let $n \ge 1$.
There exists a homomorphism $\rho_n^f: R^f [\VB_n] \to \VTL_n (R^f)$ which sends $\sigma_i$ to $-A^{-2}\,1-A^{-4}\,E_i$ and $\tau_i$ to $v_i$ for all $i \in \{1, \dots, n-1\}$.
\end{thm}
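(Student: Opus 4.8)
The plan is to exploit the presentation of $\VB_n$ due to Kamada and Vershinin recalled in the introduction. To construct $\rho_n^f$ it suffices to produce an $R^f$-algebra homomorphism, and since $R^f[\VB_n]$ is a group algebra, this reduces to constructing a group homomorphism from $\VB_n$ into the group of units $\UU(\VTL_n(R^f))$, which then extends $R^f$-linearly by the universal property of the group algebra. Concretely I would set $S_i = -A^{-2}\,1 - A^{-4}\,E_i$ and check that $S_1, \dots, S_{n-1}, v_1, \dots, v_{n-1}$ are units satisfying the defining relations of $\VB_n$.

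First I would verify the invertibility of $S_i$. Using $E_i^2 = z E_i$ together with $z = -A^2 - A^{-2}$, a direct multiplication shows that $S_i^{-1} = -A^2\,1 - A^4\,E_i$; the coefficient of $E_i$ in the product $S_i(-A^2\,1 - A^4\,E_i)$ collapses precisely because $A^2 + A^{-2} + z = 0$. The elements $v_i$ are units since $v_i^2 = 1$.

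Next I would run through the relations, most of which are immediate consequences of the relations of $\VTL_n$ recorded in Proposition \ref{prop2_2}. The relation $v_i^2 = 1$ gives $\tau_i^2 = 1$; for $|i-j| \ge 2$ the relations $E_i E_j = E_j E_i$, $v_i v_j = v_j v_i$, and $v_i E_j = E_j v_i$ yield $S_i S_j = S_j S_i$, $v_i v_j = v_j v_i$, and $v_i S_j = S_j v_i$ (the last two after expanding $S_j$); for $|i-j| = 1$, the relation $v_i v_j v_i = v_j v_i v_j$ handles the braid-type relation among the $\tau$'s, and $v_i v_j E_i = E_j v_i v_j$ gives $v_i v_j S_i = S_j v_i v_j$ upon expanding $S_i = -A^{-2}\,1 - A^{-4}\,E_i$.

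The main obstacle is the braid relation $S_i S_j S_i = S_j S_i S_j$ for $|i-j| = 1$. Writing $S_i = a\,1 + b\,E_i$ with $a = -A^{-2}$ and $b = -A^{-4}$, I would expand both sides using $E_i^2 = z E_i$ and the relation $E_i E_j E_i = E_i$, which is valid in $\VTL_n$ for $|i-j| = 1$ (established in Claim 1 of the proof of Proposition \ref{prop2_2}). After simplification both products share the constant term $a^3$ and the mixed terms $ab^2(E_i E_j + E_j E_i)$, so that comparing the coefficients of $E_i$ and of $E_j$ shows the two sides agree if and only if the discrepancy $b(a^2 + abz + b^2)$ vanishes. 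Substituting the values of $a$, $b$, $z$ gives $a^2 + abz + b^2 = A^{-4} - A^{-4} - A^{-8} + A^{-8} = 0$, so the braid relation holds; this is the one genuinely computational point, and the specific normalization $-A^{-2}\,1 - A^{-4}\,E_i$ of the image of $\sigma_i$ is exactly what forces this quadratic identity. Having verified all the relations, the universal property of the presentation yields a group homomorphism $\VB_n \to \UU(\VTL_n(R^f))$ sending $\sigma_i \mapsto S_i$ and $\tau_i \mapsto v_i$, and its $R^f$-linear extension is the desired $\rho_n^f$.
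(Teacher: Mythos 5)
Your proposal is correct and follows essentially the same route as the paper: verify that $S_i = -A^{-2}\,1 - A^{-4}\,E_i$ is a unit with inverse $-A^2\,1 - A^4\,E_i$, note that all defining relations of $\VB_n$ except the braid relation $S_i S_j S_i = S_j S_i S_j$ are immediate from Proposition \ref{prop2_2}, and settle that one by expanding with $E_i^2 = z E_i$ and $E_i E_j E_i = E_i$ (your abstract computation with $a$, $b$, and the identity $a^2 + abz + b^2 = 0$ is exactly the paper's explicit expansion in disguise). Your packaging via the universal properties of the group presentation and the group algebra is only implicit in the paper, but it is the same argument.
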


\begin{proof}
We set $S_i = -A^{-2}\,1 -A^{-4}E_i$.
We have 
\begin{gather*}
(- A^{-2} \, 1 - A^{-4} E_i)(- A^2 \, 1 - A^4 E_i) =
1 + (A^2 + A^{-2}) E_i + E_i^2 =\\
1 + (A^2 + A^{-2}) E_i + (- A^2 - A^{-2}) E_i=
1\,.
\end{gather*}
So, $S_i$ is invertible and its inverse is $- A^2 \, 1 - A^4 E_i$.
The element $v_i$ is also invertible since $v_i^2=1$.
It remains to verify that the following relations hold.
\begin{gather*}
v_i^2 = 1 \,, \quad \text{for } 1 \le i \le n-1\,,\\
S_i S_j = S_j S_i \,,\ v_i v_j = v_j v_i\,,\ v_i S_j = S_j v_i\,,\quad\text{for } |i-j|\ge2\,,\\
S_i S_j S_i = S_j S_i S_j\,,\ v_i v_j v_i = v_j v_i v_j\,,\ v_i v_j S_i = S_j v_i v_j\,,\quad\text{for } |i-j| = 1\,.
\end{gather*}
The only of these relations which is not trivial is $S_i S_j S_i = S_j S_i S_j$ for $|i-j| = 1$.
Suppose $|i-j|=1$.
Then
\begin{gather*}
S_i S_j S_i =
(- A^{-2} \, 1 - A^{-4} E_i) (- A^{-2} \, 1 - A^{-4} E_j) (- A^{-2} \, 1 - A^{-4} E_i) = \\
- A^{-6} \, 1 - A^{-8} E_i - A^{-8} E_j - A^{-10} E_i E_j - A^{-8} E_i - A^{-10} E_i^2 - A^{-10} E_j E_i - A^{-12} E_i E_j E_i =\\
- A^{-6}\,1 - 2 A^{-8} E_i - A^{-8} E_j - A^{-10} E_i E_j - A^{-10} (- A^2 - A^{-2}) E_i - A^{-10} E_j E_i - A^{-12} E_i =\\
- A^{-6} \,1 - A^{-8} E_i - A^{-8} E_j - A^{-10} E_i E_j - A^{-10} E_j E_i\,.
\end{gather*}
By symmetry we also have $S_j S_i S_j = - A^{-6} \, 1 - A^{-8} E_i - A^{-8} E_j - A^{-10} E_i E_j - A^{-10} E_j E_i$, hence $S_i S_j S_i = S_j S_i S_j$.
\end{proof}

\begin{rem}
\begin{itemize}
\item[(1)]
Recall that $B_n$ denotes the braid group on $n$ strands and $\TL_n$ denotes the $n$-th Temperley--Lieb algebra.
Then $\rho_n^f (\beta) \in \TL_n(R^f)$ for all $\beta \in B_n$, where $\TL_n (R^f) = R^f \otimes \TL_n \subset \VTL_n (R^f)$.
\item[(2)]
The sequence of homomorphisms $\{ \rho_n^f: R^f [\VB_n] \to \VTL_n(R^f)\}_{n=1}^\infty$ is compatible with the tower of algebras $\{\VTL_n (R^f)\}_{n=1}^\infty$.
\item[(3)]
Setting $\rho_n^f (\sigma_i) = -A^{-2} \, 1 - A^{-4} E_i$ instead of $\rho_n^f (\sigma_i) = A \,1 + A^{-1} E_i$, as an informed reader may expect, allows to include in $\rho_n^f$ the corrective with the writhe and to define directly the $f$-polynomial without passing through the Kauffman bracket. \end{itemize}
\end{rem}

Let $E$ be a flat virtual $n$-tangle.
By connecting with an arc the point $(0,i)$ with the point $(1,i)$ for all $i \in \{1, \dots, n\}$ in a diagram of $E$ we obtain a family of closed curves that we call the \emph{closure of the diagram} of $E$. 
We denote by $t_n(E)$ the number of closed curves in this family, and we set $T_n'^f(E) = z^{t_n(E)} = (-A^2 - A^{-2})^{t_n(E)}$.
Then we define $T_n'^f: \VTL_n (R^f) \to R^f$ by extending linearly the map $T_n'^f: \EE_n \to R^f$. 

\begin{expl}
In Figure \ref{fig2_5} is illustrated the closure of the flat virtual tangle $E$ of Figure \ref{fig2_2}.
In this case we have $t_n(E)=1$, and therefore $T_n'^f (E) = z = -A^2-A^{-2}$.
\end{expl}

\begin{figure}[ht!]
\begin{center}
\includegraphics[width=4cm]{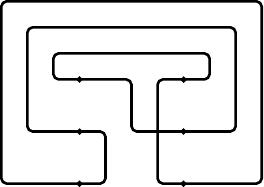}
\caption{Closure of a flat virtual tangle}\label{fig2_5}
\end{center}
\end{figure}

\begin{thm}\label{thm2_6}
The sequence $\{T_n'^f : \VTL_n(R^f) \to R^f\}_{n=1}^\infty$ is a Markov trace.
\end{thm}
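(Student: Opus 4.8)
The plan is to reduce everything to the basis $\EE_n$ and to a small number of "reduction formulas", and then to assemble the four axioms by linear algebra, rewriting the relevant words by means of the presentation of Proposition \ref{prop2_2}.

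First I would settle property (a). Since $T_n'^f$ is $R^f$-linear it suffices to check $T_n'^f(EE')=T_n'^f(E'E)$ for $E,E'\in\EE_n$. Writing $EE'=z^m(E*E')$, the number $T_n'^f(EE')=z^{m+t_n(E*E')}$ counts exactly the closed curves obtained by stacking $E$ above $E'$ and closing the whole stack up (joining $(0,i)$ to $(1,i)$ around the outside). This closed-up stack is naturally drawn on a cylinder, where it is invariant under the cyclic rotation exchanging the roles of $E$ and $E'$; hence its number of components equals that of the closed-up stack of $E'$ above $E$, namely $T_n'^f(E'E)$. This standard argument gives (a).

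Next I would isolate three reduction lemmas, valid for every $y\in\VTL_n$ seen inside $\VTL_{n+1}$ through the extra strand $\{(0,n+1),(1,n+1)\}$:
\[
T_{n+1}'^f(y)=z\,T_n'^f(y),\qquad T_{n+1}'^f(yE_n)=T_n'^f(y),\qquad T_{n+1}'^f(yv_n)=T_n'^f(y).
\]
Each is proved diagrammatically by comparing closures. For the first, the extra strand closes into one new loop, multiplying the count by $z$. For the other two, after closing up, the cup--cap of $E_n$ (resp.\ the virtual crossing of $v_n$) merely reroutes the closure arcs at positions $n$ and $n+1$ into the closure arcs of $y$ at positions $1,\dots,n$, creating no new loop; so the closure of $yE_n$ (resp.\ of $yv_n$) is isotopic to the closure of $y$. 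By linearity these hold for all $y\in\VTL_n$.

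With these in hand, the four axioms follow. For (b), using $S_n=-A^{-2}\,1-A^{-4}E_n$, $S_n^{-1}=-A^2\,1-A^4E_n$ and the lemmas,
\begin{align*}
T_{n+1}'^f(xS_n)&=-A^{-2}\,z\,T_n'^f(x)-A^{-4}\,T_n'^f(x)=T_n'^f(x),\\
T_{n+1}'^f(xS_n^{-1})&=-A^2\,z\,T_n'^f(x)-A^4\,T_n'^f(x)=T_n'^f(x),
\end{align*}
because $-A^{-2}z-A^{-4}=-A^2z-A^4=1$, while $T_{n+1}'^f(xv_n)=T_n'^f(x)$ is the third lemma. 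For (c) and (d) I would first rewrite the conjugating words via Proposition \ref{prop2_2}. Expanding and using $E_nv_{n-1}E_n=E_n$ gives
\[
S_n^{-1}v_{n-1}S_n=v_{n-1}+A^{-2}v_{n-1}E_n+A^2E_nv_{n-1}+E_n,
\]
and a similar expansion, together with the braid relations $v_iv_jv_i=v_jv_iv_j$, the absorptions $v_iE_i=E_iv_i=E_i$, and the mixed relation $v_iv_jE_i=E_jv_iv_j$, reduces the long word of (d) to
\[
v_nv_{n-1}S_{n-1}v_nS_{n-1}^{-1}v_{n-1}v_n=v_{n-1}+A^2E_nv_{n-1}+A^{-2}v_{n-1}E_n+v_{n-1}E_nv_{n-1}.
\]
In both cases every term, after a cyclic permutation allowed by (a), is of the form $y$ or $yE_n$ with $y\in\VTL_n$; applying the lemmas and the cyclicity $T_n'^f(v_{n-1}u)=T_n'^f(uv_{n-1})$ turns each trace into a multiple of $T_n'^f(xv_{n-1})$ or of $T_n'^f(x)$, and the total coefficient of $T_n'^f(xv_{n-1})$ is $z+A^2+A^{-2}=0$. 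What survives is exactly $T_n'^f(x)$, yielding (c) and (d).

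The main obstacle will be the diagrammatic reduction lemmas: one must argue carefully, for an arbitrary basis tangle $y$, that closing up $yE_n$ and $yv_n$ produces no spurious loop and is isotopic to the closure of $y$, tracking precisely how the arcs at positions $n$ and $n+1$ recombine through the closure. Once these loop-counting statements are secured, the remaining work is the purely algebraic simplification of the words in (c) and (d) through Proposition \ref{prop2_2}, which is mechanical but must be carried out without error.
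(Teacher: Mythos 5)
Your proposal is correct and follows essentially the same route as the paper's proof: the cyclic closure argument for the trace property, the three diagrammatic reduction formulas $T_{n+1}'^f(y)=z\,T_n'^f(y)$, $T_{n+1}'^f(yE_n)=T_n'^f(y)$, $T_{n+1}'^f(yv_n)=T_n'^f(y)$ (the paper's Figure \ref{fig2_6}), and the same algebraic expansions of $S_n^{-1}v_{n-1}S_n$ and of the word in (d), with the same cancellation $z+A^2+A^{-2}=0$. The identities you state, including $v_nv_{n-1}S_{n-1}v_nS_{n-1}^{-1}v_{n-1}v_n=v_{n-1}+A^2E_nv_{n-1}+A^{-2}v_{n-1}E_n+v_{n-1}E_nv_{n-1}$, match the paper's computation exactly.
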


\begin{proof}
For each $n\ge 2$ and each $i\in\{1,\dots,n-1\}$ we set $S_i=-A^{-2}\,1-A^{-4}E_i$.
We have to show that the following equalities hold.
\begin{itemize}
\item[(1)]
$T_n'^f(xy)=T_n'^f(yx)$ for all $n\ge1$ and all $x,y\in\VTL_n(R^f)$.
\item[(2)]
$T_n'^f(x)=T_{n+1}'^f(xS_n)=T_{n+1}'^f(xS_n^{-1})=T_{n+1}'^f(xv_n)$ for all $n\ge1$ and all $x\in\VTL_n(R^f)$.
\item[(3)]
$T_n'^f(x)=T_{n+1}'^f(xS_n^{-1}v_{n-1}S_n)$ for all $n\ge2$ and all $x\in\VTL_n(R^f)$.
\item[(4)]
$T_n'^f(x)=T_{n+1}'^f(xv_nv_{n-1}S_{n-1}v_nS_{n-1}^{-1}v_{n-1}v_n)$ for all $n\ge2$ and all $x\in\VTL_n(R^f)$.
\end{itemize}

{\it Proof of (1).}
We can assume that $x=E$ and $y=E'$ are flat virtual $n$-tangles.
By concatenating $E$ and $E'$ and connecting with an arc the point $(0,i)$ of $E$ to the point $(1,i)$ of $E'$ for all $i\in\{1,\dots,n\}$ we obtain a family of closed curves. 
If $m$ is the number of closed curves in this family, then $T_n'^f(EE')=z^m$.
We observe that, by concatenating $E'$ and $E$ and connecting with an arc the point $(0,i)$ of $E'$ to the point $(1,i)$ of $E$ for all $i\in\{1,\dots,n\}$, we also obtain a family of $m$ closed curves, hence $T_n'^f(E'E)=z^m=T_n'^f(EE')$.

{\it Proof of (2).}
We can assume that $x=E$ is a flat virtual $n$-tangle. 
We see in Figure \ref{fig2_6} that the following equalities hold 
\[
T_{n+1}'^f (E) =z \, T_n'^f (E)\,,\ T_{n+1}'^f (E E_n) = T_n'^f (E) \,, \ T_{n+1}'^f (E v_n) = T_n'^f (E)\,.
\]
Recall that $S_n^{-1} = -A^2 \,1 - A^4 E_n$ (see the proof of Theorem \ref{thm2_5}).
We saw in Figure \ref{fig2_6} that $T_{n+1}'^f (E v_n) = T_n'^f (E)$.
On the other hand,  
\begin{gather*}
T_{n+1}'^f (E S_n) =
- A^{-2} T_{n+1}'^f (E) - A^{-4} T_{n+1}'^f (E E_n) =
(- A^{-2} z - A^{-4}) T_n'^f (E) =
T_n'^f (E)\,,\\
T_{n+1}'^f (E S_n^{-1}) =
-A^2 T_{n+1}'^f (E) - A^4 T_{n+1}'^f (E E_n)=
(-A^2 z - A^4) T_n'^f (E)=
T_n'^f (E)\,.
\end{gather*}

\begin{figure}[ht!]
\begin{center}
\includegraphics[width=13.2cm]{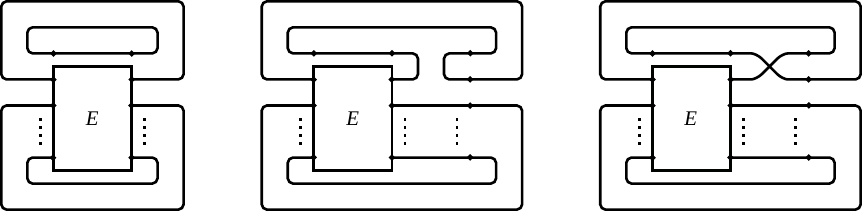}
\caption{$n+1$-closures of $E$, $EE_n$ and $Ev_n$}\label{fig2_6}
\end{center}
\end{figure}

{\it Proof of (3).}
We can again assume that $x=E$ is a flat virtual $n$-tangle.
We have
\begin{gather*}
E S_n^{-1} v_{n-1} S_n =
E (-A^2\,1-A^4E_n)v_{n-1}(-A^{-2}\,1-A^{-4}E_n)=\\
Ev_{n-1}+A^{2}EE_nv_{n-1}+A^{-2}Ev_{n-1}E_n+EE_nv_{n-1}E_n=\\
Ev_{n-1}+A^{2}EE_nv_{n-1}+A^{-2}Ev_{n-1}E_n+EE_n\,.
\end{gather*}
Hence, by the above
\begin{gather*}
T_{n+1}'^f (E S_n^{-1} v_{n-1} S_n) =\\
T_{n+1}'^f (E v_{n-1}) + A^{-2} T_{n+1}'^f (E v_{n-1} E_n) + A^{2} T_{n+1}'^f (E E_n v_{n-1}) + T_{n+1}'^f (E E_n) =\\
z  T_n'^f (E v_{n-1}) + A^{-2} T_n'^f (E v_{n-1}) + A^{2} T_{n+1}'^f (v_{n-1} E E_n) + T_n'^f(E) =\\
(-A^{2}-A^{-2}) T_n'^f (E v_{n-1}) + A^{-2} T_n'^f (E v_{n-1}) + A^{2} T_n'^f (v_{n-1} E) + T_n'^f(E) =\\
(-A^{2}-A^{-2}) T_n'^f (E v_{n-1}) + A^{-2} T_n'^f (E v_{n-1}) + A^{2} T_n'^f (E v_{n-1}) + T_n'^f(E) =
T_n'^f(E)\,.
\end{gather*}

{\it Proof of (4).}
Again, we can assume that $x=E$ is a flat virtual $n$-tangle. 
We have
\begin{gather*}
E v_n v_{n-1} S_{n-1} v_n S_{n-1}^{-1} v_{n-1} v_n =
E v_n v_{n-1} (-A^{-2}\, 1 - A^{-4} E_{n-1}) v_n (-A^{2}\, 1 - A^{4} E_{n-1}) v_{n-1} v_n =\\
( E v_n v_{n-1} v_n v_{n-1} v_n ) +
A^{2} (E v_n v_{n-1} v_n E_{n-1} v_{n-1} v_n ) +\\
A^{-2} (E v_n v_{n-1} E_{n-1} v_n v_{n-1} v_n ) +
(E v_n v_{n-1} E_{n-1} v_n E_{n-1} v_{n-1} v_n ) =\\
( E v_{n-1} v_n v_{n-1} v_{n-1} v_n ) +
A^{2} (E v_n v_{n-1} v_n E_{n-1} v_n ) +\\
A^{-2} (E v_n E_{n-1} v_n v_{n-1} v_n ) +
(E v_n  E_{n-1} v_n E_{n-1} v_n ) =\\
( E v_{n-1} ) +
A^{2} (E v_n v_{n-1} v_{n-1} E_n v_{n-1} ) +
A^{-2} (E v_{n-1} E_n v_{n-1} v_{n-1} v_n ) +
(E v_n  E_{n-1} v_n ) =\\
( E v_{n-1} ) +
A^{2} (E E_n v_{n-1} ) +
A^{-2} (E v_{n-1} E_n ) +
(E v_{n-1}  E_n v_{n-1} )\,.
\end{gather*}
Hence, by the above
\begin{gather*}
T_{n+1}'^f (E v_n v_{n-1} S_{n-1} v_n S_{n-1}^{-1} v_{n-1} v_n) =\\
T_{n+1}'^f ( E v_{n-1} ) +
A^{2} T_{n+1}'^f (E E_n v_{n-1} ) +
A^{-2} T_{n+1}'^f (E v_{n-1} E_n ) +
T_{n+1}'^f (E v_{n-1}  E_n v_{n-1} ) =\\
z T_n'^f ( E v_{n-1} ) +
A^{2} T_{n+1}'^f (v_{n-1} E E_n ) +
A^{-2} T_n'^f (E v_{n-1} ) +
T_{n+1}'^f (v_{n-1} E v_{n-1}  E_n ) =\\
(-A^{2} - A^{-2}) T_n'^f ( E v_{n-1} ) +
A^{2} T_n'^f (v_{n-1} E ) +
A^{-2} T_n'^f (E v_{n-1} ) +
T_n'^f (v_{n-1} E v_{n-1} ) =\\
(-A^{2} - A^{-2}) T_n'^f ( E v_{n-1} ) +
A^{2} T_n'^f ( E v_{n-1} ) +
A^{-2} T_n'^f ( E v_{n-1} ) +
T_n'^f ( E v_{n-1} v_{n-1} ) =
T_n'^f (E )\,.\qquad \proved
\end{gather*}
\end{proof}

\begin{corl}\label{corl2_7}
For each $n \ge 1$ we set $T_n^f=T_n'^f \circ \rho_n^f: R^f [\VB_n] \to R^f$.
Then $\{T_n^f: R^f [\VB_n] \to R^f\}_{n=1}^\infty$ is a Markov trace.
\end{corl}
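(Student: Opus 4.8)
The plan is to observe that Corollary \ref{corl2_7} is an immediate instance of the general principle recorded in the introduction: whenever $\{\rho_n : R[\VB_n] \to A_n\}$ is a compatible sequence of homomorphisms into a tower of algebras $\{A_n\}$ and $\{T_n' : A_n \to R\}$ is a Markov trace on that tower, the composite $\{T_n' \circ \rho_n\}$ is a Markov trace on $\{R[\VB_n]\}$. So the work reduces to checking that the two hypotheses of this principle are met and then unwinding the four defining conditions by direct substitution.

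First I would record that the hypotheses hold. By Theorem \ref{thm2_5} each $\rho_n^f$ is a homomorphism, and by part (2) of the Remark following it the sequence $\{\rho_n^f\}$ is compatible with the tower $\{\VTL_n(R^f)\}$; concretely, the restriction of $\rho_{n+1}^f$ to $R^f[\VB_n]$ equals $\rho_n^f$. By Theorem \ref{thm2_6} the sequence $\{T_n'^f\}$ is a Markov trace on this tower, in the sense of the tower conditions (a)--(d) stated in the introduction, with $S_i = \rho_n^f(\sigma_i) = -A^{-2}\,1 - A^{-4} E_i$ and $v_i = \rho_n^f(\tau_i)$. Note also that $S_i$ is invertible with $S_i^{-1} = \rho_n^f(\sigma_i^{-1})$, as established in the proof of Theorem \ref{thm2_5}, so that $\rho_n^f$ carries every word in $\sigma_i, \sigma_i^{-1}, \tau_i$ to the corresponding word in $S_i, S_i^{-1}, v_i$.

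Then I would verify the four conditions. For (a), given $x, y \in R^f[\VB_n]$, the homomorphism property gives $T_n^f(xy) = T_n'^f(\rho_n^f(x)\rho_n^f(y))$, and property (a) of $T_n'^f$ turns this into $T_n'^f(\rho_n^f(y)\rho_n^f(x)) = T_n^f(yx)$. For (b)--(d), the crucial point is that $\rho_{n+1}^f$ sends the stabilizing words of $\VB_{n+1}$ to exactly the words appearing in the tower conditions: $\sigma_n, \sigma_n^{-1}, \tau_n$ go to $S_n, S_n^{-1}, v_n$; the word $\sigma_n^{-1}\tau_{n-1}\sigma_n$ goes to $S_n^{-1} v_{n-1} S_n$; and $\tau_n\tau_{n-1}\sigma_{n-1}\tau_n\sigma_{n-1}^{-1}\tau_{n-1}\tau_n$ goes to $v_n v_{n-1} S_{n-1} v_n S_{n-1}^{-1} v_{n-1} v_n$. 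Combining this with compatibility (which lets me replace $\rho_{n+1}^f(x)$ by $\rho_n^f(x)$ for $x \in R^f[\VB_n]$) and the matching property of $T_n'^f$ yields each identity; for instance $T_{n+1}^f(x\sigma_n) = T_{n+1}'^f(\rho_n^f(x) S_n) = T_n'^f(\rho_n^f(x)) = T_n^f(x)$, and likewise for the remaining three generators and the two longer words.

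I expect essentially no obstacle here: the statement is a formal consequence of Theorems \ref{thm2_5} and \ref{thm2_6} together with compatibility, and the only point requiring a moment's care is the use of the invertibility of $S_i$ to guarantee $\rho_n^f(\sigma_i^{-1}) = S_i^{-1}$, so that the conditions involving inverses in (b) and (c) translate correctly.
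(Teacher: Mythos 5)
Your proposal is correct and matches the paper's route exactly: the paper states Corollary \ref{corl2_7} without a separate proof precisely because it is the instance of the general principle from the introduction (a Markov trace on a compatible tower composed with the homomorphisms $\rho_n$ gives a Markov trace on $\{R[\VB_n]\}$) applied to Theorems \ref{thm2_5} and \ref{thm2_6}. Your explicit unwinding of conditions (a)--(d), including the check that $\rho_n^f(\sigma_i^{-1}) = S_i^{-1}$, simply spells out what the paper leaves implicit.
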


Recall that $\VV\LL$ denotes the set of virtual links.
To complete the study of this section it remains to prove the following. 

\begin{thm}\label{thm2_8}
Let $I^f: \VV\LL\to R^f$ be the invariant defined from the Markov trace of Corollary~\ref{corl2_7}.
Then $I^f$ coincides with the $f$-polynomial.
\end{thm}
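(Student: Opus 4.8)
The plan is to show that the invariant $I^f$ produced by the Markov trace $T_n^f = T_n'^f \circ \rho_n^f$ agrees with the $f$-polynomial on every virtual link. Since every virtual link is the closure $\hat\beta$ of some virtual braid $\beta \in \VB_n$ (as recalled before Theorem~\ref{thm1_1}), and since both $I^f$ and $f$ are well-defined invariants of virtual links, it suffices to prove that
\[
I^f(\hat\beta) = T_n^f(\beta) = f(\hat\beta) \quad \text{for all } \beta \in \VB_n \text{ and all } n \ge 1.
\]
Because $T_n^f$ is $R^f$-linear and $\rho_n^f$ is a homomorphism, both sides are determined by their values on the generators $\sigma_i, \sigma_i^{-1}, \tau_i$; the strategy is therefore to trace through how the defining local relation of the Kauffman bracket is reproduced inside $\VTL_n(R^f)$ under $\rho_n^f$.

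\emph{First I would} unwind the construction $T_n'^f(\rho_n^f(\beta)) = z^{t_n(\rho_n^f(\beta))}$. The key observation is that $\rho_n^f$ was built precisely so that it mirrors the Kauffman skein relation together with the writhe correction: we have $\rho_n^f(\sigma_i) = -A^{-2}\,1 - A^{-4} E_i$. Expanding the braid word $\beta$ via $\rho_n^f$ writes $\rho_n^f(\beta)$ as an $R^f$-linear combination of flat virtual tangles, where each flat tangle corresponds to a complete smoothing of $\beta$ (replacing every classical crossing by either the identity strand or the cap-cup $E_i$, and every virtual crossing by the flat virtual crossing $v_i$). Applying $T_n'^f$ closes up each such smoothing and records $z^{(\text{number of resulting loops})}$. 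The plan is to identify this state sum with the unnormalized Kauffman bracket $\langle \hat\beta \rangle$ up to the writhe factor $(-A^3)^{-w(\hat\beta)}$.

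\emph{The cleanest way} to make this precise is to prove the stronger identity $T_n^f(\beta) = (-A^3)^{-w(\hat\beta)}\langle \hat\beta \rangle$ by induction on the number of classical crossings of $\beta$. In the base case, $\beta$ is a product of $\tau_i$'s only, so $\rho_n^f(\beta)$ is a single flat virtual tangle obtained from the $v_i$'s, its closure has exactly $t_n$ loops, $w(\hat\beta)=0$, and $\langle\hat\beta\rangle = z^{t_n}$ matches directly. For the inductive step, pick one classical crossing, say corresponding to a letter $\sigma_i^{\pm 1}$; the identity $\rho_n^f(\sigma_i) = -A^{-2}1 - A^{-4}E_i$ exactly reproduces the two-term skein relation of Figure~\ref{fig2_1} once the $(-A^3)^{-w}$ factor is distributed, since $-A^{-2}$ and $-A^{-4}$ are $A$ and $A^{-1}$ rescaled by the writhe contribution $-A^{-3}$ of a positive crossing (and symmetrically $-A^2, -A^4$ rescaled by $-A^3$ for $\sigma_i^{-1}$). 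Resolving that one crossing by linearity splits $\hat\beta$ into the two diagrams $L_1, L_2$ of the bracket relation, to each of which the inductive hypothesis applies.

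\emph{The main obstacle} will be bookkeeping the writhe correction consistently through the linear expansion: the global factor $(-A^3)^{-w}$ is not a local quantity, so I must verify that the rescaled coefficients $-A^{-2}, -A^{-4}$ (resp.\ $-A^2, -A^4$) produced by $\rho_n^f$ combine with the change in writhe at each resolved crossing to yield exactly the local factors $A, A^{-1}$ of the genuine Kauffman bracket. Concretely, resolving a positive crossing into its identity smoothing versus its $E_i$ smoothing changes the writhe by a controlled amount, and one checks that $-A^{-2}$ times $(-A^3)^{+1}$ gives $A$ while $-A^{-4}$ times $(-A^3)^{+1}$ gives $A^{-1}$, as required; the negative-crossing case is symmetric. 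I would also confirm that the convention $z = -A^2 - A^{-2}$ used in $T_n'^f$ matches the loop value in the bracket so that the closed-curve count $t_n$ contributes the correct power of $z$ in both the base case and throughout the induction. Once this compatibility of scalars is pinned down, invariance of the whole construction is guaranteed by Corollary~\ref{corl2_7} together with Theorem~\ref{thm1_1}, and the equality $I^f = f$ follows.
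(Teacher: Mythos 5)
Your proposal is correct and follows essentially the same route as the paper: both identify $T_n^f(\beta)$ with the state sum obtained by expanding each $\sigma_i^{\pm 1}$ via the Kauffman skein relation, and both absorb the global writhe factor $(-A^3)^{-w(\hat\beta)}$ crossing-by-crossing using $w(\hat\beta)=\omega(\beta)$, which is exactly why $\rho_n^f(\sigma_i)=-A^{-2}\,1-A^{-4}E_i$ rather than $A\,1+A^{-1}E_i$. Your induction on the number of classical crossings is merely an organizational variant of the paper's one-shot expansion (and would require stating the inductive claim for mixed braid--tangle diagrams, since the resolutions $L_1,L_2$ are no longer braid closures), but the underlying scalar bookkeeping is identical and the argument is sound.
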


\begin{proof}
Let $\beta$ be a virtual braid on $n$ strands and let $\hat\beta$ be its closure. 
Observe that the relation $\langle L \rangle = A\,\langle L_1 \rangle + A^{-1} \, \langle L_2 \rangle$ in the definition of the Kauffman bracket corresponds in terms of closed virtual braids to replacing each $\sigma_i$ with $A \, 1 + A^{-1} E_i$ and each $\sigma_i^{-1}$ with $A^{-1} \, 1 + A E_i$.
Once we have replaced each $\sigma_i$ with $A \, 1 + A^{-1} E_i$ and each $\sigma_i^{-1}$ with $A^{-1} \,1 +A \, E_i$, we get a linear combination $\sum_{i=1}^\ell a_i E^{(i)}$, where $E^{(i)} \in \EE_n$ and $a_i \in R^f$.
For each $i \in \{1, \dots, \ell\}$ we denote by $m_i = t_n (E^{(i)})$ the number of closed curves in the closure of $E^{(i)}$.
We see that
\[
\langle \hat \beta \rangle = \sum_{i=1}^\ell a_i z^{m_i}\,.
\]
Recall that $w :\VV \LL  \to \Z$ denotes the writhe.
Let $\omega: \VB_n \to \Z$ be the homomorphism which sends $\sigma_i$ to $1$ and $\tau_i$ to $0$ for all $i \in \{1, \dots, n-1\}$.
Then $w (\hat \beta) = \omega (\beta)$ and therefore $f (\hat \beta) = (-A^3)^{-\omega (\beta)} \langle \hat \beta \rangle$.
So, in the above procedure, if we replace each $\sigma_i$ with $(-A^3)^{-1} (A \,1 + A^{-1} E_i)= -A^{-2} \,1 - A^{-4} E_i$ and each $\sigma_i^{-1}$ with $(-A^3) (A^{-1} \,1 + A E_i) = -A^2 \,1 - A^4 E_i$, then we get directly $f (\hat \beta)$.
It is clear that this procedure also leads to $T_n^f (\beta)$.
\end{proof}


\section{Arrow Temperley--Lieb algebras and arrow polynomial}\label{sec3}

Throughout the section we consider the infinite families of variables $\ZZ=\{z_k\}_{k=0}^\infty$ and $\ZZ^*=\{z_k\}_{k=1}^\infty = \ZZ\setminus\{z_0\}$, and we consider the algebra $R_0^a=\Z[\ZZ]$ of polynomials in the variables in $\ZZ$, and the algebra $R^a=\Z[A^{\pm 1},\ZZ^*]$ of Laurent polynomials in the variable $A$ and standard polynomials in the variables in $\ZZ^*$.
We also assume that the algebra $R_0^a$ is embedded into $R^a$ via the identification $z_0=-A^2-A^{-2}$.
Following the same strategy as in Section \ref{sec2}, we start by recalling the definition of the arrow polynomial, so that the reader will understand easier the constructions that will follow after.

Let $S_1,\dots,S_\ell$ be a collection of $\ell$ circles smoothly immersed in the plane and having only a finite number of normal double crossings.
We assume that each circle $S_i$ has an even number $m_i$ of marked points outside the crossings that we call \emph{cusps}.
We assume also that each segment between two successive cusps is oriented so that the orientations of the two segments adjacent to a given cusp are opposite. 
So, each cusp is either a \emph{sink} or a \emph{source}, according to the orientations of the segments adjacent to it (see Figure \ref{fig3_1}).
If $m_i=0$, then $S_i$ is assumed to have a (unique) orientation. 
In addition, each cusp has a privileged side that we indicate with a small segment like in Figure \ref{fig3_1}.
Finally, as for the virtual link diagrams, we assign a value ``positive'', ``negative'', or ``virtual'' to each crossing, that we indicate in its graphical representation as in Figure \ref{fig1_1}.
Such a figure is called an \emph{arrow virtual link diagram} with $\ell$ components. 
Note that the virtual link diagrams are the arrow virtual link diagrams with no cusps.

\begin{figure}[ht!]
\begin{center}
\includegraphics[width=4cm]{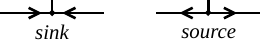}
\caption{Sink and source in an arrow virtual link diagram}\label{fig3_1}
\end{center}
\end{figure}

\begin{expl}
Figure \ref{fig3_2} shows an arrow virtual link diagram with two components. 
One component has two cusps and the other has no cusp.
\end{expl}

\begin{figure}[ht!]
\begin{center}
\includegraphics[width=2.6cm]{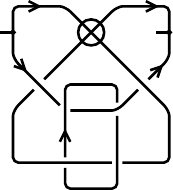}
\caption{Arrow virtual link diagram}\label{fig3_2}
\end{center}
\end{figure}

Let $L$ be an arrow virtual link diagram with only virtual crossings.
Let $S$ be a component of $L$.
If $S$ has two consecutive cusps $p$ and $q$ having the same privileged side, then we remove the two cusps and orient the new arc with the same orientation as that of the arc adjacent to $p$ different from $[p,q]$.
In the particular case where $p$ and $q$ are the only cusps of $S$, then we can choose any of the orientations of $S$.
This operation is called a \emph{reduction} of $S$ and is illustrated in Figure \ref{fig3_3}.
We apply such a reduction as many times as needed to get an \emph{irreducible component}, $S'$.
If $2c$ is the number of cusp of $S'$, then $c$ is called the \emph{number of zigzags} of $S$ and is denoted by $\zeta (S)=c$.
If $S_1,\dots, S_\ell$ are the components of $L$, then we set
\[
\langle\!\langle L\rangle\!\rangle=\prod_{i=1}^\ell z_{\zeta(S_i)}\,.
\]
This is a monomial of $R_0^a$.

\begin{figure}[ht!]
\begin{center}
\includegraphics[width=6.8cm]{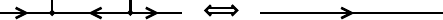}
\caption{Reduction}\label{fig3_3}
\end{center}
\end{figure}

We define the \emph{arrow Kauffman bracket} $\langle\!\langle L\rangle\!\rangle\in R^a$ of any arrow virtual link diagram $L$ as follows.
If $L$ has only virtual crossings, then $\langle\!\langle L\rangle\!\rangle$ is the monomial $\prod_{i=1}^\ell z_{\zeta (S_i)}$ defined above. 
Suppose that $L$ has at least one non-virtual crossing at a point $p$.
If the crossing is positive, then we set $\langle\!\langle L\rangle\!\rangle=A\langle\!\langle L_1\rangle\!\rangle+A^{-1}\langle\!\langle L_2\rangle\!\rangle$, and, if the crossing is negative, then we set $\langle\!\langle L\rangle\!\rangle=A^{-1}\langle\!\langle L_1\rangle\!\rangle+A\langle\!\langle L_2\rangle\!\rangle$, where $L_1$ and $L_2$ are identical to $L$ except in a small neighborhood of $p$ where there are as shown in Figure \ref{fig3_4}.
As for the virtual link diagrams, the \emph{writhe} of an arrow virtual link diagram $L$, denoted $w(L)$, is the number of positive crossings menus the number of negative crossings. 
Then the \emph{arrow polynomial} of an arrow virtual link diagram $L$ is defined by $\overrightarrow{f} (L)=(-A^3)^{-w(L)}\langle\!\langle L\rangle\!\rangle$.

\begin{figure}[ht!]
\begin{center}
\includegraphics[width=8.4cm]{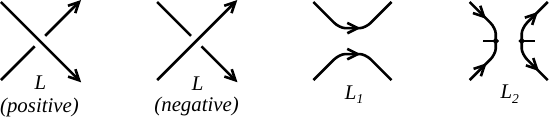}
\caption{Relation in the arrow Kauffman bracket}\label{fig3_4}
\end{center}
\end{figure}

\begin{thm}[Miyazawa \cite{Miyaz1}, Dye--Kauffman \cite{DyeKau1}]\label{thm3_1}
\begin{itemize}
\item[(1)]
If two virtual link diagrams $L$ and $L'$ are equivalent, then $\overrightarrow{f} (L)=\overrightarrow{f} (L')$.
\item[(2)]
If $L$ is a diagram of a classical link, then $\overrightarrow{f} (L) = f(L) \in R^f =\Z [A^{\pm 1}]$.
\end{itemize}
\end{thm}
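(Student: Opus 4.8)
The plan is to prove the two parts separately, reducing both to controlled computations with the state sum defining $\langle\!\langle L\rangle\!\rangle$. For part~(1), since $\overrightarrow{f}(L)=(-A^3)^{-w(L)}\langle\!\langle L\rangle\!\rangle$ and the writhe $w$ is invariant under every generating move of the equivalence relation (planar isotopy together with the classical and virtual Reidemeister moves) except the classical Reidemeister~I move, under which it changes by $\pm 1$, it suffices to show: (a) the arrow bracket $\langle\!\langle -\rangle\!\rangle$ is invariant under the classical moves R2 and R3, under all purely virtual moves, and under the mixed (detour) move; and (b) under a Reidemeister~I move the bracket is multiplied by $-A^{\pm 3}$, so that the normalization $(-A^3)^{-w(L)}$ exactly absorbs this factor. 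This is the same scheme as for the ordinary $f$-polynomial (Theorem~\ref{thm2_1}); the only genuinely new ingredient is that each state of the expansion now carries cusps, so I must track the zigzag numbers $\zeta(S_i)$ through each move.

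First I would record the structural fact that the arrow bracket resolves only the classical crossings, so every state is a flat virtual diagram decorated with cusps whose weight is the monomial $\prod_i z_{\zeta(S_i)}$. Hence invariance under the virtual moves and the mixed move reduces to the single assertion that $\zeta(S)$ is an invariant of the flat virtual isotopy class of a loop: these moves neither create nor destroy classical crossings, so they act on each state by a flat virtual isotopy, and one checks from the reduction of Figure~\ref{fig3_3} that sliding cusps through virtual crossings leaves the reduced cusp count unchanged. For R3 and the parallel R2 move the computation is formally identical to the unoriented Kauffman bracket calculation. The subtle case is the anti-parallel R2 move: here the two disoriented smoothings insert cusps, and I must verify that these cusps form a reducible zigzag (two consecutive cusps with the same privileged side, as in Figure~\ref{fig3_3}), so that after reduction the offending state contributes exactly as in the classical case; the remaining three-term cancellation then proceeds via the loop value $z_0=-A^2-A^{-2}$ exactly as for $\langle -\rangle$. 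The R1 computation produces the factor $-A^{\pm 3}$ once one checks that the small loop created by the oriented smoothing has $\zeta=0$.

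For part~(2), I would prove a Lemma asserting that for a classical link diagram every loop in every state of the arrow-bracket expansion has zigzag number zero. The point is that a classical diagram carries a coherent planar orientation, so along any loop of any state the inserted cusps pair up and cancel under the reduction of Figure~\ref{fig3_3}: at each classical crossing exactly the disoriented smoothing inserts a cancelling sink/source pair, so the algebraic cusp count around each loop vanishes and $\zeta(S_i)=0$. Consequently each loop contributes $z_{\zeta(S_i)}=z_0$, and under the identification $z_0=-A^2-A^{-2}$ the state sum $\langle\!\langle L\rangle\!\rangle$ collapses term by term to the ordinary Kauffman bracket $\langle L\rangle$ of Section~\ref{sec2}; indeed, on coherently oriented diagrams the sign-dependent arrow skein of Figure~\ref{fig3_4} reduces to the unoriented skein of Figure~\ref{fig2_1}. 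Since the writhe is computed identically in both settings, $\overrightarrow{f}(L)=(-A^3)^{-w(L)}\langle\!\langle L\rangle\!\rangle=(-A^3)^{-w(L)}\langle L\rangle=f(L)\in R^f$.

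The main obstacle I anticipate is the anti-parallel Reidemeister~II move in part~(1): this is the only place where the cusp bookkeeping genuinely departs from the ordinary bracket, and one must check carefully that the cusps produced by the two disoriented smoothings are consecutive and co-sided, hence reducible, so that the zigzag weights do not obstruct the term cancellation. Establishing that $\zeta$ is a well-defined flat virtual invariant, which is needed for the virtual and detour moves, is conceptually the same difficulty and should be isolated as a preliminary lemma used throughout.
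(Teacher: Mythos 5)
Note first that the paper does not actually prove Theorem \ref{thm3_1}: part (1) is quoted from Miyazawa and Dye--Kauffman, and for part (2) the paper supplies an independent algebraic substitute, namely Proposition \ref{prop3_7} via Lemma \ref{lem3_8} (combined with Theorems \ref{thm2_8} and \ref{thm3_9}, and Remark (3) after Theorem \ref{thm3_4}, which identifies $\rho_n^a(\beta)$ with $\rho_n^f(\beta)\in\TL_n(R^f)$ for classical braids $\beta$). Your proposal instead follows the original diagrammatic state-sum route. For part (1) your plan is sound and correctly organized: splitting into writhe behavior plus bracket invariance, isolating the anti-parallel Reidemeister II move as the only place where cusp bookkeeping genuinely enters, is exactly the standard argument, and your four-state analysis there (the cusp-free middle loop contributing $z_0=-A^2-A^{-2}$, the mixed and doubly disoriented states reducing by the move of Figure \ref{fig3_3}) is the right verification, as is the preliminary lemma that $\zeta$ is invariant under flat virtual isotopy and detour moves.

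The genuine gap is in your justification of the key lemma for part (2). You argue that ``at each classical crossing exactly the disoriented smoothing inserts a cancelling sink/source pair, so the algebraic cusp count around each loop vanishes.'' This per-site pairing is not a proof: the two cusps created at a single disoriented smoothing site sit on the two distinct arcs of that smoothing, so along a state loop they are in general \emph{not} consecutive, and they need not even lie on the same state loop (each loop is only guaranteed an even total number of cusps). Since the reduction of Figure \ref{fig3_3} cancels only consecutive co-sided cusps, the vanishing of $\zeta$ on every state loop of a classical diagram is a global consequence of planarity, not a local cancellation, and it can fail for virtual diagrams even though every site still inserts a sink/source pair --- which shows the local argument cannot suffice. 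Two honest repairs: (i) use that the states of a classical diagram are disjoint embedded circles in the plane and run an innermost-circle induction (or compute the travel-relative signed cusp count, which is the invariant of reduction, via rotation numbers); or (ii) follow the paper: put the classical link in braid-closure form, so that every state is a non-crossing flat tangle lying in the parity subalgebra $\VTL_n^\nu$ with labels recording coordinate differences of arc endpoints, and observe, as in the proof of Lemma \ref{lem3_8}, that the signed count $h(\hat\gamma)$ around any cycle of the closure telescopes to $a_{r+1,p_{r+1}}-a_{1,0}=0$, whence $\zeta(\hat\gamma)=0$. The paper's algebraic route trades your planar-geometry lemma for this telescoping computation; with that one step repaired, your argument goes through.
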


\begin{rem}
There is a notion of ``equivalence'' between arrow virtual link diagrams and Theorem \ref{thm3_1} holds in this framework (see Miyazawa \cite{Miyaz1}), but the topic of the present paper are the virtual links, hence we state the theorem only for virtual link diagrams.
\end{rem}

The \emph{arrow polynomial} of a virtual link $L$, denoted $\overrightarrow{f} (L)$, is defined to be the arrow polynomial of any of its diagrams. 
This is a well-defined invariant thanks to Theorem \ref{thm3_1}.

Our aim now is to construct a Markov trace whose associated invariant is the arrow polynomial. 
We proceed with the same strategy as in Section \ref{sec2} for the $f$-polynomial: we pass through a tower of algebras, $\{\ATL_n\}_{n=1}^\infty$, that we will call arrow Temperley--Lieb algebras. 
We will also give a new proof/interpretation of Theorem \ref{thm3_1}\,(2) in terms of Markov traces.

For the remainder of the section we need a more combinatorial definition of the multiplication in $\VTL_n$.
Recall that $V_n = \{ 0, 1 \} \times \{ 1, \dots, n\}$ is ordered by $(0,1) < (0,2) < \cdots < (0,n) < (1,n) < \cdots < (1,2) < (1,1)$.
Let $E, E' \in \EE_n$.
An \emph{arc} of length $\ell$ in $E \sqcup E'$ is a $\ell$-tuple $\hat \alpha = (\alpha_1, \dots, \alpha_\ell)$ in $E \sqcup E'$, where $\alpha_i = \{(a_i, b_{i-1}),(c_i, b_i) \}$, satisfying the following properties.
\begin{itemize}
\item[(a)]
If $\alpha_i \in E$ and $i<\ell$, then $c_i=1$, $\alpha_{i+1} \in E'$ and $a_{i+1}=0$.
\item[(b)]
If $\alpha_i \in E'$ and $i<\ell$, then $c_i=0$, $\alpha_{i+1} \in E$ and $a_{i+1}=1$.
\item[(c)]
$a_0=0$ if $\alpha_1 \in E$, $a_0 = 1$ if $\alpha_1 \in E'$, $c_\ell=0$ if $\alpha_\ell \in E$, $c_\ell = 1$ if $\alpha_\ell \in E'$, and $(a_1,b_0) < (c_\ell,b_\ell)$.
\end{itemize}
The \emph{boundary} of $\hat\alpha$ is $\partial\hat\alpha=\{(a_1,b_0),(c_\ell,b_\ell) \}$.
There are $n$ arcs in $E\sqcup E'$ and their boundaries form a flat virtual $n$-tangle, denoted $E*E'$.

Let $E, E' \in \EE_n$.
A \emph{cycle} of length $2p \ge 2$ in $E \sqcup E'$ is a $2p$-tuple $\hat \gamma = (\gamma_1, \dots, \gamma_{2p})$ in $E \sqcup E'$, where $\gamma_i = \{ (a_i, b_{i-1}), (c_i, b_i) \}$, satisfying the following properties.
\begin{itemize}
\item[(a)]
$\gamma_i \in E$ and $a_i= c_i = 1$, if $i$ is odd.
\item[(b)]
$\gamma_i \in E'$ and $a_i = c_i = 0$, if $i$ is even.
\item[(c)]
$b_0 = b_{2p} < b_i$ for all $i \in \{1, \dots, 2p-1\}$.
\end{itemize}
Let $m$ be the number of cycles in $E \sqcup E'$.
Then $E\,E'=z^m(E*E')$.

We can now define our algebra $\ATL_n$.
Let $n \ge 1$.
An \emph{arrow flat $n$-tangle} is a flat virtual $n$-tangle $E$ endowed with a labeling $f:E\to\Z$ 
such that, for $\alpha = \{ (a,b), (c,d) \} \in E$, $f(\alpha)$ is odd if $a=c$, and $f (\alpha)$ is even if $a \neq c$.
Recall that $\ZZ=\{z_k\}_{k=0}^\infty$ and $R_0^a=\Z[\ZZ]$.
We denote by $\FF_n$ the set of arrow flat $n$-tangles and by $\ATL_n$ the free $R_0^a$-module freely generated by $\FF_n$.

\begin{interpret}
Instead of labeling the arcs we could endow each arc with marked points (cusps) and each cusp with a privileged side that we indicate with a small segment, like for arrow virtual link diagrams, so that two consecutive cusps have different privileged sides. 
The number of cusps on an arc $\alpha$ would be equal to $|f(\alpha)|$.
Consider the order of $V_n$ defined above. 
If we travel on the arc from its smallest extremity to its largest one, we set $f(\alpha)>0$ if the privileged side of the first encountered cusp is on the left hand side, and we set $f(\alpha)<0$ otherwise.
An arrow flat tangle and its version with cusps are illustrated in Figure \ref{fig3_5}.
\end{interpret}

\begin{figure}[ht!]
\begin{center}
\includegraphics[width=4.5cm]{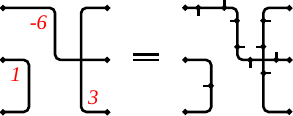}
\caption{Arrow flat tangle}\label{fig3_5}
\end{center}
\end{figure}

We now define the multiplication in $\ATL_n$.
Let $F=(E,f)$ and $F'=(E',f')$ be two arrow flat $n$-tangles. 
To simplify our notation we set $f^*(\alpha)=f(\alpha)$ if $\alpha\in E$ and $f^*(\alpha)=f'(\alpha)$ if $\alpha\in E'$.
The \emph{parity} of an element $\alpha = \{ (a,b), (c,d) \} \in E \sqcup E'$ is $\varpi (\alpha) = -1$ if $a=c$, and $\varpi (\alpha) = 1$ if $a \neq c$.
Let $\hat\alpha=(\alpha_1, \dots, \alpha_\ell )$ be an arc of $E \sqcup E'$.
Let $i \in \{1, \dots, \ell\}$.
As in the above definition of arc, we set $\alpha_i = \{ (a_i,b_{i-1}), (c_i, b_i)\}$ for all $i$.
We define the \emph{cumulated parity} of $\alpha_i$ relative to $\hat \alpha$ by $\varpi^c(\alpha_i) = \prod_{j=1}^{i-1} \varpi(\alpha_j)$ if $(a_i,b_{i-1}) < (c_i,b_i)$, and $\varpi^c(\alpha_i) = \prod_{j=1}^{i} \varpi(\alpha_j)$ if $(c_i,b_i) < (a_i, b_{i-1})$.
Then we set
\[
g(\partial\hat\alpha)=g(\hat\alpha)=\sum_{i=1}^\ell \varpi^c(\alpha_i)\, f^*(\alpha_i)\,.
\]
At this stage we have an arrow flat $n$-tangle $F*F'=(E*E',g)$.
Let $\hat\gamma=(\gamma_1, \dots, \gamma_\ell)$ be a cycle of $E\sqcup E'$.
Again, we write $\gamma_i = \{(a_i, b_{i-1}), (c_i, b_i)\}$ for all $i$.
As for an arc, we define the \emph{cumulated parity} of $\gamma_i$ relative to $\hat \gamma$ by $\varpi^c(\gamma_i) = \prod_{j=1}^{i-1} \varpi(\gamma_j)$ if $(a_i,b_{i-1}) < (c_i,b_i)$, and by $\varpi^c(\gamma_i) = \prod_{j=1}^{i} \varpi(\gamma_j)$ if $(c_i, b_i) < (a_i, b_{i-1})$. 
We set
\[
h(\hat\gamma)=\sum_{i=1}^\ell\varpi^c (\gamma_i)\, f^*(\gamma_i)\,.
\]
Observe that $h(\hat\gamma)$ is an even number.
The \emph{number of zigzags} of $\hat\gamma$ is defined by $\zeta(\hat\gamma)=\frac{|h(\hat\gamma)|}{2}$.
Let $\hat\gamma_1,\dots,\hat\gamma_m$ be the cycles of $E\sqcup E'$.  
Then the product of $F$ and $F'$ is 
\[
F\,F'=z_{\zeta(\hat\gamma_1)}\cdots z_{\zeta(\hat\gamma_m)}(F*F')\,.
\]
It is easily checked that $\ATL_n$ endowed with this multiplication is an (associative and unitary) algebra.
We call it the $n$-th \emph{arrow Temperley--Lieb algebra}.

\begin{expl}
On the left hand side of Figure \ref{fig3_6} are illustrated two arrow flat tangles $F$ and $F'$, and  $F * F'$ is illustrated on the right hand side.
Here we have a unique cycle in $E \sqcup E'$, $\hat\gamma_1$, and $h(\hat\gamma_1)=4$, hence $\zeta(\hat\gamma_1)=2$ and $F\,F'=z_2(F*F')$.
\end{expl}

\begin{figure}[ht!]
\begin{center}
\includegraphics[width=7.8cm]{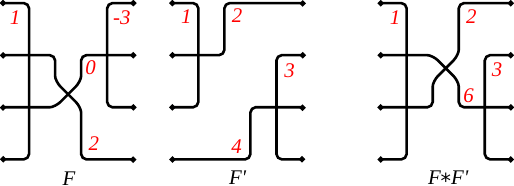}
\caption{Multiplication in $\ATL_n$}\label{fig3_6}
\end{center}
\end{figure}

\begin{rem}
Let $n \ge 1$. 
For $F=(E,f) \in \FF_n$ we define $F^\sharp =(E^\sharp, f^\sharp) \in \FF_{n+1}$ by setting $E^\sharp = E \cup \{ \{ (0,n+1), (1,n+1) \} \}$, $f^\sharp(\alpha) = f(\alpha)$ for all $\alpha \in E$, and $f^\sharp (\{ (0,n+1), (1,n+1) \}) = 0$.
Then the map $\FF_n \to \FF_{n+1}$, $F \mapsto F^\sharp$, is an embedding which induces an injective homomorphism $\ATL_n \hookrightarrow \ATL_{n+1}$.
So, we have a tower of algebras $\{\ATL_n\}_{n=1}^\infty$.
\end{rem}

\begin{prop}\label{prop3_2}
Let $n \ge 2$.
Then $\ATL_n$ has a presentation with generators
\[
F_1,\dots,F_{n-1},w_1,\dots,w_{n-1},t_1,\dots,t_n,t_1^{-1},\dots,t_n^{-1}\,,
\]
and relations
\begin{gather*}
t_it_i^{-1}=t_i^{-1}t_i=1\text{ for } 1\le i\le n\,,\quad
t_it_j=t_jt_i\text{ for }1\le i<j\le n\,,\\
w_i^2=1\text{ for }1\le i\le n-1\,,\quad
w_iw_j=w_jw_i\text{ for } |i-j|\ge 2\,,\\
w_iw_jw_i=w_jw_iw_j\text{ for }|i-j|=1\,,\quad
w_it_i=t_{i+1}w_i\text{ for }1\le i\le n-1\,,\\
w_it_{i+1}=t_iw_i\text{ for }1\le i\le n-1\,,\quad
w_it_j=t_jw_i\text{ for }j\neq i,i+1\,,\\
F_it_i^mF_i=z_{|m|}F_i\text{ for }1\le i \le n-1 \text{ and }m\in\Z\,,\quad
F_iw_i=F_it_i=F_it_{i+1}^{-1}\text{ for }1\le i\le n-1\,,\\
w_iF_i=t_i^{-1}F_i=t_{i+1}F_i\text{ for }1 \le i\le n-1\,,\quad
F_iF_j=F_jF_i\text{ for }|i-j|\ge2\,,\\
F_iw_j=w_jF_i\text{ for }|i-j|\ge2\,,\quad
F_it_j=t_jF_i\text{ for }j\neq i,i+1\,,\\
F_iw_jF_i=F_i\text{ for }|i-j|=1\,,\quad
w_iw_jF_i=F_jw_iw_j\text{ for }|i-j|=1\,.
\end{gather*}
\end{prop}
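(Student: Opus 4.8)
The plan is to follow closely the proof of Proposition \ref{prop2_2}, treating the labels as an extra layer of data superimposed on the underlying flat virtual tangle. First I would let $A_n$ be the abstract $R_0^a$-algebra defined by the presentation in the statement, and exhibit a homomorphism $\varphi : A_n \to \ATL_n$ sending $F_i$, $w_i$ and $t_i^{\pm 1}$ to the evident labeled diagrams: $F_i$ and $w_i$ are the labeled analogues of $E_i$ and $v_i$, while $\varphi(t_i)$ is the identity tangle whose $i$-th (through) strand carries one zigzag, i.e. the label $2$. The existence of $\varphi$ amounts to checking each relation by a direct diagrammatic computation using the multiplication rule of $\ATL_n$. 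The only relations requiring care are $F_i t_i^m F_i = z_{|m|} F_i$, where one must verify that the cycle created in $E \sqcup E$ has cumulated parity giving $h = \pm 2m$ and hence $\zeta = |m|$, and the conversion relations $F_i w_i = F_i t_i = F_i t_{i+1}^{-1}$ and $w_i F_i = t_i^{-1} F_i = t_{i+1} F_i$, which encode how a zigzag slides around a cup or a cap. Surjectivity of $\varphi$ will follow a posteriori from the spanning argument below.

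To prove that $\varphi$ is an isomorphism I would, exactly as in Proposition \ref{prop2_2}, produce a subset $\BB^a \subseteq A_n$ that $\varphi$ maps bijectively onto the basis $\FF_n$ of $\ATL_n$, and then show that $\BB^a$ spans $A_n$ over $R_0^a$. For the bijection, recall that an arrow flat $n$-tangle is a pair $(E,f)$ with $E \in \EE_n$ and $f$ a labeling of the correct parity. The underlying tangle $E$ is written uniquely as $\varphi(\iota(w_1)\, B_p\, \iota(w_2^{-1}))$ with $w_1 \in U_{1,p}$ and $w_2 \in U_{2,p}$, just as in Proposition \ref{prop2_2}. The labeling $f$ is then recorded by inserting appropriate powers of the $t$-generators: I would define $\BB^a$ to consist of the elements $\iota(w_1)\, t^{\mathbf c}\, B_p\, t^{\mathbf d}\, \iota(w_2^{-1})$, where $t^{\mathbf c}$ and $t^{\mathbf d}$ are monomials in $t_1, \dots, t_n$ that install the (even) labels on the through arcs and the (odd) labels on the cups and caps of $B_p$. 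The point to verify is that $\varphi$ sends this set bijectively onto $\FF_n$: surjectivity because every admissible labeling is realized, and injectivity because the label of each arc can be read back off the diagram, so distinct normal forms yield distinct tangles.

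For the spanning statement I would mirror Claims 1--5 of Proposition \ref{prop2_2}, now carrying the $t$-letters along. A convenient organizing remark is that the relations involving only $w_1, \dots, w_{n-1}, t_1^{\pm 1}, \dots, t_n^{\pm 1}$ are exactly those presenting the group $\Z^n \rtimes \SSS_n$, with $\SSS_n$ acting by permutation of the coordinates of $\Z^n$ (the relations $w_i t_i = t_{i+1} w_i$, $w_i t_{i+1} = t_i w_i$, $w_i t_j = t_j w_i$); this reduces all label bookkeeping in the $F$-free part to normalizing a single group element, which is routine. One then shows that every monomial $Y$ in the generators lies in $\Span_{R_0^a}(\BB^a)$ by induction on the number of occurrences of the $F_i$'s, using $F_i t_j = t_j F_i$ for $j \neq i, i+1$ together with the conversion relations to move every surviving $t$ either into the outer group-algebra factors or into a configuration $F_i t_i^m F_i$, where it is absorbed into the scalar $z_{|m|}$. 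The underlying tangle is simplified by the labeled versions of the moves of Proposition \ref{prop2_2} ($X_iX_jX_i=X_i$, $X_iX_j=y_jy_iX_j$, and so on), which I would first record as a labeled Claim~1.

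The main obstacle I anticipate is this spanning/reduction step, and specifically the sign and parity bookkeeping for the labels. Whereas in Proposition \ref{prop2_2} moving an $E_i$ past virtual crossings was label-free, here every such move permutes and may flip the sign of the labels through the cumulated-parity rule, and every closed loop that is created must be accounted for by the correct factor $z_{|m|}$ via $F_i t_i^m F_i = z_{|m|} F_i$. Establishing that these algebraic absorptions reproduce exactly the geometric zigzag counts --- equivalently, that the normal form in $\BB^a$ records precisely the arc-labels, so that $\varphi$ restricted to $\BB^a$ is injective --- is where the real work lies. Once this is done, the surjectivity of $\varphi$, the bijection $\BB^a \to \FF_n$, and the fact that $\BB^a$ spans $A_n$ together show that $\varphi$ is an isomorphism, which proves the proposition.
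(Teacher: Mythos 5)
Your proposal takes essentially the same route as the paper's proof: the paper also defines the abstract algebra $A_n$ by the stated presentation, maps it diagrammatically onto $\ATL_n$, encodes the $w$- and $t$-generators through a homomorphism $\iota$ from the group $G=\SSS_n\ltimes\Z^n$, and exhibits the normal-form set $\BB=\{\iota(g_1)\,B_p\,\iota(g_2^{-1})\}$ (your $\iota(w_1)\,t^{\mathbf c}\,B_p\,t^{\mathbf d}\,\iota(w_2^{-1})$ is exactly this, with the labels recorded by explicit exponents such as $\nu_{1,2i-1}=\frac{f(\alpha_i)-1}{2}$), which is shown to biject onto $\FF_n$ and to span $A_n$ via labeled analogues of Claims 1--5 of Proposition \ref{prop2_2}, including the absorption of stray $t$-powers through $F_it_i^mF_i=z_{|m|}F_i$ and the conversion relations. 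The parity and sign bookkeeping you flag as the real work is precisely what the paper's Claims 4--6 carry out, so your plan is correct and matches the published argument.
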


The generators $F_i$, $w_i$ and $t_j$ are illustrated in Figure \ref{fig3_7}.

\begin{figure}[ht!]
\begin{center}
\includegraphics[width=6.6cm]{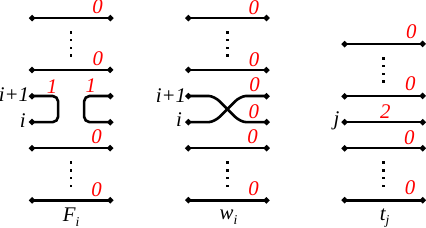}
\caption{Generators of $\ATL_n$}\label{fig3_7}
\end{center}
\end{figure}

\begin{proof}
Let $A_n$ be the $R_0^a$-algebra defined by a presentation with generators
\[
X_1,\dots,X_{n-1},y_1,\dots,y_{n-1},u_1,\dots,u_n,u_1^{-1},\dots,u_n^{-1}\,,
\]
and relations 
\begin{gather*}
u_iu_i^{-1}=u_i^{-1}u_i=1\text{ for } 1\le i\le n\,,\quad
u_iu_j=u_ju_i\text{ for }1\le i<j\le n\,,\\
y_i^2=1\text{ for }1\le i\le n-1\,,\quad
y_iy_j=y_jy_i\text{ for } |i-j|\ge 2\,,\\
y_iy_jy_i=y_jy_iy_j\text{ for }|i-j|=1\,,\quad
y_iu_i=u_{i+1}y_i\text{ for }1\le i\le n-1\,,\\
y_iu_{i+1}=u_iy_i\text{ for }1\le i\le n-1\,,\quad
y_iu_j=u_jy_i\text{ for }j\neq i,i+1\,,\\
X_iu_i^mX_i=z_{|m|}X_i\text{ for }1\le i \le n-1 \text{ and }m\in\Z\,,\quad
X_iy_i=X_iu_i=X_iu_{i+1}^{-1}\text{ for }1\le i\le n-1\,,\\
y_iX_i=u_i^{-1}X_i=u_{i+1}X_i\text{ for }1 \le i\le n-1\,,\quad
X_iX_j=X_jX_i\text{ for }|i-j|\ge2\,,\\
X_iy_j=y_jX_i\text{ for }|i-j|\ge2\,,\quad
X_iu_j=u_jX_i\text{ for }j\neq i,i+1\,,\\
X_iy_jX_i=X_i\text{ for }|i-j|=1\,,\quad
y_iy_jX_i=X_jy_iy_j\text{ for }|i-j|=1\,.
\end{gather*}
It is easily checked using diagrammatic calculation that there is a homomorphism $\varphi: A_n\to\ATL_n$ which sends $X_i$ to $F_i$ for $i\in\{1,\dots,n-1\}$, $y_i$ to $w_i$ for $i\in\{1,\dots,n-1\}$, and $u_i^{\pm1}$ to $t_i^{\pm1}$ for $i\in\{1,\dots,n\}$.
It remains to prove that $\varphi$ is an isomorphism.

{\it Claim 1.}
{\it The following equalities hold in $A_n$.
\begin{gather*}
X_iX_{i+1}y_iy_{i+1}=X_iu_{i+2}^{-1}=u_{i+2}^{-1}X_i\text{ for }1\le i\le n-2\,,\\
y_{i+1}y_iX_{i+1}X_i=u_{i+2}X_i=X_iu_{i+2}\text{ for }1\le i\le n-2\,,\\
X_iX_{i-1}y_iy_{i-1}=X_iu_{i-1}=u_{i-1}X_i\text{ for }2\le i\le n-1\,,\\
y_{i-1}y_iX_{i-1}X_i=u_{i-1}^{-1}X_i=X_iu_{i-1}^{-1}\text{ for }2 \le i\le n-1\,.
\end{gather*}}

{\it Proof of Claim 1.}
We prove the first equality. 
The other three can be proved in the same way. 
Let $i\in\{1,\dots,n-2\}$.
Then
\begin{gather*}
X_iX_{i+1}y_iy_{i+1}=
X_iy_iy_{i+1}X_i=
X_iu_{i+1}^{-1}y_{i+1}X_i=
X_iy_{i+1}u_{i+2}^{-1}X_i=
X_iy_{i+1}X_iu_{i+2}^{-1}=\\
X_iu_{i+2}^{-1}=
u_{i+2}^{-1}X_i\,.
\end{gather*}

{\it Claim 2.}
{\it Let $i,j\in\{1,\dots,n-1\}$ such that $|i-j|=1$.
Then $X_iX_jX_i=X_i$.}

{\it Proof of Claim 2.}
We suppose that $j=i+1$.
The case $j=i-1$ can be proved in the same way.
\begin{gather*}
X_iX_{i+1}X_i=
X_iX_{i+1}y_iy_{i+1}y_{i+1}y_iX_i=
X_iu_{i+2}^{-1}y_{i+1}u_{i+1}X_i=
X_iu_{i+2}^{-1}u_{i+2}y_{i+1}X_i=\\
X_iy_{i+1}X_i=
X_i\,.
\end{gather*}

{\it Claim 3.}
{\it Let $i\in\{1,\dots,n-3\}$.
Then $y_{i+1}y_{i+2}y_iy_{i+1}X_iX_{i+2}=X_iX_{i+2}$.}

{\it Proof of Claim 3.}
\begin{gather*}
y_{i+1}y_{i+2}y_iy_{i+1}X_iX_{i+2}=
y_{i+1}y_{i+2}y_iy_{i+1}X_iu_{i+2}u_{i+2}^{-1}X_{i+2}=\\
y_{i+1}y_{i+2}y_iy_{i+1}y_{i+1}y_iX_{i+1}X_iu_{i+2}^{-1}X_{i+2}=
y_{i+1}y_{i+2}X_{i+1}X_iu_{i+2}^{-1}X_{i+2}=\\
y_{i+1}y_{i+2}X_{i+1}u_{i+3}u_{i+3}^{-1}X_iu_{i+3}X_{i+2}=
y_{i+1}y_{i+2}y_{i+2}y_{i+1}X_{i+2}X_{i+1}X_iX_{i+2}=\\
X_{i+2}X_{i+1}X_{i+2}X_i=
X_{i+2}X_i=
X_iX_{i+2}\,.
\end{gather*}

Consider the action of the symmetric group $\SSS_n$ on $\Z^n$ by permutations of the coordinates, and set $G = \SSS_n \ltimes \Z^n$.
Let $\{e_1, \dots, e_n\}$ be the standard basis of $\Z^n$ and let $\{s_1, \dots, s_{n-1}\}$ be the standard set of generators of $\SSS_n$.
Recall that $s_i$ is the transposition $(i,i+1)$ for $i \in \{1,\dots,n-1\}$.
We use multiplicative notation for the operation in $\Z^n$ and we denote by $1_{\Z^n}$ its neutral element. 
Let $\UU (A_n)$ be the group of units of $A_n$.
We have a homomorphism $\iota: G \to \UU (A_n)$ which sends $s_i$ to $y_i$ for all $i \in \{1, \dots, n-1\}$ and $e_j$ to $u_j$ for all $j \in \{1, \dots, n\}$.

Let $n_0$ be the integer part of $\frac{n}{2}$.
For $p \in \{1, \dots, n_0\}$ we set $B_p = X_1 X_3 \cdots X_{2p-1}$, and for $p=0$ we set $B_p = B_0 = 1$. 
We denote by $U_{1,p}$ the subset of $G$ formed by the elements of the form $g=wh$ where $w \in \SSS_n$ satisfies
\begin{gather*}
w(1) < w(3) < \cdots < w(2p-1)\,,\quad
w(2i-1) < w(2i) \text{ for } 1 \le i \le p\,,\\
w(2p+1) < w(2p+2) < \cdots <w(n)\,,
\end{gather*}
and $h = e_1^{\nu_1} e_2^{\nu_2} \cdots e_n^{\nu_n} \in \Z^n$ satisfies 
\[
\nu_i = 0 \text{ for } i \in \{2, 4, \dots, 2p, 2p+1, 2p+2, \dots, n\}\,.
\]
On the other hand, we denote by $U_{2,p}$ the subset of $G$ formed by the elements of the form $g = w h$ where $w \in \SSS_n$ satisfies
\[
w(1) < w(3) < \cdots < w(2p-1)\,,\quad
w(2i-1) < w(2i) \text{ for } 1 \le i \le p\,,
\]
and $h = e_1^{\nu_1} e_2^{\nu_2} \cdots e_n^{\nu_n} \in \Z^n$ satisfies
\[
\nu_i = 0 \text{ for } i \in \{2, 4, \dots, 2p\}\,.
\]
Then we set 
\[
\BB_p = \{ \iota(g_1) \, B_p \, \iota(g_2^{-1}) \mid g_1 \in U_{1,p}\,,\ g_2 \in U_{2,p}\}\,,
\]
for $0 \le p \le n_0$, and  
\[
\BB = \bigcup_{p=0}^{n_0} \BB_p\,.
\]

Let $F = (E,f) \in \FF_n$.
We can write $E$ in the form $E = \{\alpha_1, \dots, \alpha_p, \alpha_1', \dots, \alpha_p', \beta_1, \dots, \beta_q\}$, where
\begin{itemize}
\item
each $\alpha_i$ is of the form $\alpha_i = \{(0,a_i), (0,b_i)\}$, with $a_1 < a_2 < \cdots <a_p$, and $a_i < b_i$ for all $i \in \{1,\dots,p\}$;
\item
each $\alpha_i'$ is of the form $\alpha_i' = \{(1,a_i'), (1,b_i')\}$, with $a_1' < a_2' < \cdots < a_p'$, and $a_i' < b_i'$ for all $i \in \{1, \dots, p\}$;
\item
each $\beta_j$ is of the form $\beta_j = \{(0,c_j), (1,d_j)\}$, with $c_1 < c_2 < \cdots < c_q$, and $2p+q = n$.
\end{itemize}
We define $w_1 \in \SSS_n$ by $w_1 (2i-1) = a_i$ and $w_1 (2i) = b_i$ for all $i \in \{1, \dots, p\}$, and $w_1 (2p+j) = c_j$ for all $j \in \{1, \dots, q\}$.
We define $w_2 \in \SSS_n$ by $w_2 (2i-1) = a_i'$ and $w_2 (2i) = b_i'$ for all $i \in \{1, \dots, p\}$, and $w_2 (2p+j) = d_j$ for all $j \in \{1, \dots, q\}$.
We define $h_1 = e_1^{\nu_{1,1}} e_2^{\nu_{1,2}} \cdots e_n^{\nu_{1,n}} \in \Z^n$ by $\nu_{1,2i-1} = \frac{f(\alpha_i) - 1}{2}$ and $\nu_{1,2i}=0$ for $i \in \{1, \dots, p\}$, and $\nu_{1,2p+j} = 0$ for  $j \in \{1, \dots, q\}$.
We define $h_2 = e_1^{\nu_{2,1}} e_2^{\nu_{2,2}} \cdots e_n^{\nu_{2,n}} \in \Z^n$ by $\nu_{2,2i-1} = \frac{f(\alpha_i')-1}{2}$ and $\nu_{2,2i}=0$ for $i \in \{1, \dots, p\}$, and $\nu_{2,2p+j} = -\frac{f(\beta_j)}{2}$ for $j \in \{1, \dots, q\}$.
We set $g_1 = w_1 h_1$ and $g_2 = w_2 h_2$.
Then $g_1 \in U_{1,p}$, $g_2 \in U_{2,p}$, and $F = \varphi \big( \iota (g_1) \, B_p \, \iota (g_2^{-1}) \big)$.
Moreover, such an expression is unique, and $\varphi (Y) \in \FF_n$ for all $Y \in \BB$.
So, $\varphi$ restricts to a bijection from $\BB$ to $\FF_n$.

So, in order to prove Proposition \ref{prop3_2}, it suffices to show that $\BB$ spans $A_n$ as a $R_0^a$-module.
Let $\MM$ be the submonoid of $A_n$ generated by $X_1, \dots, X_{n-1}, y_1, \dots, y_{n-1}, u_1^{\pm 1}, \dots, u_n^{\pm 1}$, that is, the set of finite products of elements in $\{X_1, \dots, X_{n-1}, y_1, \dots, y_{n-1}, u_1^{\pm 1}, \dots, u_n^{\pm 1}\}$.
By definition $\MM$ spans $A_n$ as a $R_0^a$-module, hence we only need to show that $\MM$ is contained in the $R_0^a$-submodule $\Span_{R_0^a} (\BB)$ of $A_n$ spanned by $\BB$.

{\it Claim 4.}
{\it Let $g_1, g_2 \in G$ and $p \in \{0, 1, \dots, n_0\}$.
Then $\iota (g_1) \, B_p \, \iota(g_2^{-1}) \in \BB$.}

{\it Proof of Claim 4.}
We write $g_1 = w_1 h_1$ and $g_2 = w_2 h_2$ with $w_1, w_2 \in \SSS_n$ and $h_1, h_2 \in \Z^n$.
Let $i \in \{1, \dots, p\}$ such that $w_1 (2i-1) = b_i > w_1 (2i) = a_i$.
By using the relation $y_i X_i = u_i^{-1} X_i$, we can replace $w_1$ with $w_1 s_i$ and $h_1$ with $s_i (h_1e_i) s_i \in \Z^n$, and then $w_1 (2i-1) = a_i <w_1 (2i) = b_i$.
So, we can assume that $w_1 (2i-1) < w_1 (2i)$ for all $i \in \{1, \dots, p\}$.
Let $i \in \{1, \dots, p-1\}$ such that $w_1 (2i-1) = a_{i+1} > w_1 (2i+1) = a_i$.
By Claim 3 we can replace $w_1$ with $w_1 s_{2i} s_{2i-1} s_{2i+1} s_{2i}$ and $h_1$ with  $(s_{2i} s_{2i+1} s_{2i-1} s_{2i}) h_1 (s_{2i} s_{2i-1} s_{2i+1} s_{2i}) \in \Z^n$.
Then we have $w_1 (2i-1) = a_i < w_1 (2i+1) = a_{i+1}$ while keeping the inequalities $w_1(2i-1) < w_1 (2i)$ and $w_1 (2i+1) < w_1 (2i+2)$.
Thus, we can also assume that $w_1 (1) < w_1 (3) < \cdots < w_1 (2p-1)$.
Let $j \in \{1, \dots, q-1\}$ such that $w_1 (2p+j) = c_{j+1} > w_1(2p+j+1) = c_j$.
By applying the relations $y_{2p+j} X_{2i-1} = X_{2i-1} y_{2p+j}$ for $i \in \{1, \dots, p\}$, we can replace $w_1$ with $w_1 s_{2p+j}$, $h_1$ with $s_{2p+j}\,h_1\,s_{2p+j} \in \Z^n$, and $g_2$ with $g_2 s_{2p+j}$, and then $w_1 (2p+j) = c_j < w_1 (2p+j+1) = c_{j+1}$.
So, we can also assume that $w_1 (2p+1) < w_1 (2p+2) < \cdots < w_1(n)$.
We set $h_1 = e_1^{\nu_{1,1}} \cdots e_n^{\nu_{1,n}}$ and $h_2 = e_1^{\nu_{2,1}} \cdots e_n^{\nu_{2,n}}$.
Let $i \in \{1, \dots, p\}$.
By applying the relation $u_{2i-1}^{-1} X_{2i-1} =u_{2i} X_{2i-1}$, we can replace $\nu_{1,2i}$ with $0$ and $\nu_{1,2i-1}$ with $\nu_{1,2i-1}-\nu_{1,2i}$.
So, we can also assume that $\nu_{1,2i}=0$ for all $i \in \{1, \dots, p\}$.
Let $j \in \{1, \dots, q\}$.
By applying the relations $u_{2p+j} X_{2i-1} = X_{2i-1} u_{2p+j}$ for $i \in \{1, \dots, p\}$, we can replace $\nu_{1,2p+j}$ with $0$ and $\nu_{2,2p+j}$ with $\nu_{2,2p+j}-\nu_{1,2p+j}$.
Thus, we can also assume that $\nu_{1,2p+j}=0$ for all $j \in \{1, \dots, q\}$.
In conclusion, we can assume that $g_1 \in U_{1,p}$.

We can use the same argument to show that $g_2$ can be replaced with some $g_2' \in U_{2,p}$.
So, $\iota(g_1) \, B_p \, \iota(g_2^{-1}) \in \BB$.
This concludes the proof of Claim 4. 

{\it Claim 5.}
{\it Let $p \in \{0, 1, \dots, n_0\}$, $a, b \in \{1, \dots, n-1\}$ and $m \in \Z$, such that $a \le b$.
Then $X_a u_a^m y_{a+1} \cdots y_b B_p \in \Span_{R_0^a}(\BB)$.}

{\it Proof of Claim 5.}
Suppose $a \ge 2p+1$.
Then 
\begin{gather*}
X_a u_a^m y_{a+1} \cdots y_b B_p =
X_a B_p u_a^m y_{a+1} \cdots y_b =\\
X_a (y_{a-1} y_a) (y_{a-2} y_{a-1}) \cdots (y_{2p+1} y_{2p+2})(y_{2p+2} y_{2p+1}) \cdots (y_{a-1} y_{a-2}) (y_a y_{a-1}) B_p u_a^m y_{a+1} \cdots y_b =\\
(y_{a-1} y_a) (y_{a-2} y_{a-1}) \cdots (y_{2p+1} y_{2p+2})X_{2p+1} B_p (y_{2p+2} y_{2p+1}) \cdots (y_{a-1} y_{a-2}) (y_a y_{a-1}) u_a^m y_{a+1} \cdots y_b =\\
(y_{a-1} y_a) (y_{a-2} y_{a-1}) \cdots (y_{2p+1} y_{2p+2}) B_{p+1} (y_{2p+2} y_{2p+1}) \cdots (y_{a-1} y_{a-2}) (y_a y_{a-1}) u_a^m y_{a+1} \cdots y_b =\\
\iota (g_1) \, B_{p+1} \iota(g_2^{-1}) \in \BB\,,
\end{gather*}
where 
\begin{gather*}
g_1 = (s_{a-1} s_a) (s_{a-2} s_{a-1}) \cdots (s_{2p+1} s_{2p+2})\,,\\
g_2 = s_b \cdots s_{a+1} e_a^{-m} (s_{a-1} s_a) (s_{a-2} s_{a-1}) \cdots (s_{2p+1} s_{2p+2})\,.
\end{gather*}
Suppose $a \le 2p$ and $a$ is even.
Let $c$ such that $a=2c$.
Then
\begin{gather*}
X_a u_a^m y_{a+1} \cdots y_b B_p =
X_{2c} u_{2c+1}^{-m} y_{2c+1} \cdots y_b X_{2c-1} X_1\cdots X_{2c-3} X_{2c+1} \cdots X_{2p-1} =\\
X_{2c} X_{2c-1} u_{2c+1}^{-m} y_{2c+1} \cdots y_b X_1\cdots X_{2c-3} X_{2c+1} \cdots X_{2p-1} =\\
y_{2c-1} y_{2c} y_{2c} y_{2c-1} X_{2c} X_{2c-1} u_{2c+1}^{-m} y_{2c+1} \cdots y_b X_1 \cdots X_{2c-3} X_{2c+1} \cdots X_{2p-1} =\\
y_{2c-1} y_{2c} u_{2c+1} X_{2c-1} u_{2c+1}^{-m} y_{2c+1} \cdots y_b X_1 \cdots X_{2c-3} X_{2c+1} \cdots X_{2p-1} =\\
\iota (s_{2c-1} s_{2c} e_{2c+1}^{1-m} s_{2c+1} \cdots s_b) B_p \in \BB\,.
\end{gather*}
Suppose $a \le 2p$, $a$ is odd, and $a=b$.
Let $c$ such that $a=2c-1$.
Then 
\begin{gather*}
X_a u_a^m y_{a+1} \cdots y_b B_p =
X_{2c-1} u_{2c-1}^m X_{2c-1} X_1 \cdots X_{2c-3} X_{2c+1} \cdots X_{2p-1} =\\
z_{|m|} X_{2c-1} X_1 \cdots X_{2c-3} X_{2c+1} \cdots X_{2p-1} =
z_{|m|} B_p \in \Span_{R_0^a} (\BB)\,.
\end{gather*}
Suppose $a\le 2p$, $a$ is odd, and $a<b$.
Let $c$ such that $a=2c-1$.
Then 
\begin{gather*}
X_a u_a^m y_{a+1} \cdots y_b B_p =
X_{2c-1} u_{2c-1}^m y_{2c} y_{2c+1} \cdots y_b X_{2c-1} X_1 \cdots X_{2c-3} X_{2c+1} \cdots X_{2p-1} =\\
X_{2c-1} u_{2c}^{-m} y_{2c} X_{2c-1} y_{2c+1} \cdots y_b X_1 \cdots X_{2c-3} X_{2c+1} \cdots X_{2p-1} =\\
X_{2c-1} y_{2c} u_{2c+1}^{-m} X_{2c-1} y_{2c+1} \cdots y_b X_1 \cdots X_{2c-3} X_{2c+1} \cdots X_{2p-1} =\\
X_{2c-1} y_{2c} X_{2c-1} u_{2c+1}^{-m} y_{2c+1} \cdots y_b X_1 \cdots X_{2c-3} X_{2c+1} \cdots X_{2p-1} =\\
X_{2c-1} u_{2c+1}^{-m} y_{2c+1} \cdots y_b X_1 \cdots X_{2c-3} X_{2c+1} \cdots X_{2p-1} =
\iota( e_{2c+1}^{-m} s_{2c+1} \cdots s_b) B_p \in \BB\,.
\end{gather*}

{\it Claim 6.}
{\it Let $g \in G$ and $p \in \{0, 1, \dots, n_0\}$.
Then $X_1 \iota(g)\,B_p \in \Span_{R_0^a} (\BB)$.}

{\it Proof of Claim 6.}
We write $g$ in the form $g = h w$ with $h = e_1^{\nu_1} \cdots e_n^{\nu_n} \in \Z^n$ and $w \in \SSS_n$.
We have
\[
X_1 \iota(g) B_p =
X_1 u_1^{\nu_1} u_2^{\nu_2} \cdots u_n^{\nu_n} \iota(w) B_p =
\iota( e_3^{\nu_3} \cdots e_n^{\nu_n}) X_1 \iota(e_1^{\nu_1-\nu_2}w) B_p\,.
\]
Thus, by Claim 4, we can assume that $h=e_1^m$ with $m \in \Z$.
There exist $w_1 \in \langle s_3, \dots, s_{n-1} \rangle$, $a \in \{1, \dots, n-1\}$, and $b \in \{0, 1, \dots, n-1\}$ such that $w = w_1 s_2 s_3 \cdots s_a s_1 s_2 \cdots s_b$.
We have
\[
X_1 \iota(g) B_p =
X_1 u_1^m \iota(w_1) y_2 \cdots y_a y_1 \cdots y_b B_p =
\iota(w_1) X_1 u_1^m y_2 \cdots y_a y_1 \cdots y_b B_p\,.
\]
Thus, by Claim 4, we can assume that $w=s_2 \cdots s_a s_1 \cdots s_b$.
Suppose $a \le b$.
Then 
\begin{gather*}
X_1 \iota(g) B_p =
X_1 u_1^m y_2 \cdots y_a y_1 \cdots y_b B_p =\\
X_1 u_1^m (y_2 y_1) (y_3 y_2) \cdots (y_a y_{a-1}) y_a y_{a+1} \cdots y_b B_p =\\
(y_2 y_1) (y_3 y_2) \cdots (y_a y_{a-1}) X_a u_a^m y_a y_{a+1} \cdots y_b B_p =\\
(y_2 y_1) (y_3 y_2) \cdots (y_a y_{a-1}) X_a y_a u_{a+1}^m y_{a+1} \cdots y_b B_p =\\
(y_2 y_1) (y_3 y_2) \cdots (y_a y_{a-1}) X_a u_{a+1}^{m-1} y_{a+1} \cdots y_b B_p =\\
\iota ((s_2 s_1) (s_3 s_2) \cdots (s_a s_{a-1})) X_a u_a^{1-m} y_{a+1} \cdots y_b B_p \in
\Span_{R_0^a} (\BB)\,.
\end{gather*}
The last inclusion follows from Claim 4 and Claim 5.
Suppose $a > b$.
Then
\begin{gather*}
X_1 \iota(g) B_p =
X_1 u_1^m y_2 \cdots y_a y_1 \cdots y_b B_p =\\
X_1 u_1^m (y_2 y_1) (y_3 y_2) \cdots (y_{b+1} y_b) y_{b+2} \cdots y_a B_p =\\
\iota( (s_2 s_1) (s_3 s_2) \cdots (s_{b+1} s_b)) X_{b+1} u_{b+1}^m  y_{b+2} \cdots y_a B_p \in
\Span_{R_0^a} (\BB)\,.
\end{gather*}
Again, the last inclusion follows from Claim 4 and Claim 5.
This concludes the proof of Claim~6. 

As pointed out just before Claim 4, the following claim ends the proof of Proposition \ref{prop3_2}.

{\it Claim 7.}
{\it The set $\MM$ is contained in $\Span_{R_0^a} (\BB)$.}

{\it Proof of Claim 7.}
Let $Y \in \MM$.
By using the relations $y_i y_j X_i = X_j y_i y_j$ for $|i-j|=1$, we see that $Y$ can be written in the form $Y = \iota(g_0) X_1 \iota(g_1) X_1 \cdots X_1 \iota(g_k)$, where $k\ge 0$ and $g_0, g_1, \dots, g_k \in G$.
We argue by induction on $k$. 
The cases $k=0$ and $k=1$ follow directly from Claim 4.
So, we can assume that $k \ge 2$ and that the inductive hypothesis holds.
By the inductive hypothesis, $\iota(g_1) X_1 \cdots X_1 \iota(g_k) \in \Span_{R_0^a} (\BB)$.
Thus, we just have to show that $\iota (g_0) X_1 \iota (g_1') B_p \iota (g_2'^{-1}) \in \Span_{R_0^a} (\BB)$ for all $p \in \{0, 1, \dots, n_0\}$ and all $g_1', g_2' \in G$.
By Claim 6 we have $X_1 \iota (g_1') B_p \in \Span_{R_0^a} (\BB)$, hence, by Claim 4,  $\iota (g_0) X_1 \iota (g_1') B_p \iota (g_2'^{-1}) \in \Span_{R_0^a} (\BB)$.
This concludes the proof of Claim 7.
\end{proof}

Recall that the Temperley--Lieb algebra $\TL_n$ is the algebra over $R_0^f = \Z [z]$ defined by the presentation with generators $E_1, \dots, E_{n-1}$ and relations
\[
E_i^2 = z E_i \text{ for } 1 \le i \le n-1\,,\
E_i E_j = E_j E_i \text{ for } |i-j| \ge 2\,,\
E_i E_j E_i = E_i \text{ for } |i-j|=1\,.
\]
We see in the presentation given in Proposition \ref{prop3_2} that the relations $F_i^2 = z_0 F_i$, for $1 \le i \le n-1$, and $F_i F_j = F_j F_i$, for $|i-j| \ge 2$, hold in $\ATL_n$.
We also know that the relations $F_i F_j F_i = F_i$, for $|i-j|=1$, hold (see Claim 2 in the proof of Proposition \ref{prop3_2}).
So, we have a ring homomorphism $\iota: \TL_n \to \ATL_n$ which sends $z$ to $z_0$, and $E_i$ to $F_i$ for all $i \in \{1, \dots, n-1\}$.

\begin{prop}\label{prop3_3}
Let $n \ge 2$.
Then the above defined homomorphism $\iota: \TL_n \to \ATL_n$ is injective.
\end{prop}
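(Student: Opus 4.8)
The plan is to reduce the statement to Proposition \ref{prop2_4} by constructing a label-forgetting homomorphism $\pi \colon \ATL_n \to \VTL_n$ and then showing that $\pi \circ \iota$ is exactly the Kauffman embedding $\TL_n \hookrightarrow \VTL_n$ of Proposition \ref{prop2_4}. Since that embedding is injective, injectivity of $\iota$ will follow formally from injectivity of the composite.

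First I would define $\pi$. Let $\lambda \colon R_0^a \to R_0^f$ be the ring homomorphism sending $z_k$ to $z$ for every $k \ge 0$ (in particular $z_0 \mapsto z$). On the free basis $\FF_n$ of $\ATL_n$ I set $\pi(E,f) = E \in \EE_n \subset \VTL_n$, that is, I simply forget the labeling, and I extend $\pi$ to all of $\ATL_n$ additively and compatibly with $\lambda$, so that $\pi(r \cdot F) = \lambda(r)\,\pi(F)$ for $r \in R_0^a$ and $F \in \FF_n$.

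The crux is to check that $\pi$ is multiplicative, and this is where the combinatorial description of the product is used, although the verification is short. Given $F = (E,f)$ and $F' = (E',f')$ in $\FF_n$, the product in $\ATL_n$ is $F F' = z_{\zeta(\hat\gamma_1)} \cdots z_{\zeta(\hat\gamma_m)} (F * F')$, where $\hat\gamma_1, \dots, \hat\gamma_m$ are the cycles of $E \sqcup E'$ and the underlying flat virtual tangle of $F * F'$ is $E * E'$. The number $m$ of cycles and the tangle $E * E'$ depend only on the underlying flat virtual tangles $E$ and $E'$, not on the labelings; the labels influence only the zigzag numbers $\zeta(\hat\gamma_i)$, which $\lambda$ erases. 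Hence $\pi(F F') = \lambda(z_{\zeta(\hat\gamma_1)} \cdots z_{\zeta(\hat\gamma_m)})\,(E * E') = z^m (E * E') = E E' = \pi(F)\,\pi(F')$, the third equality being the definition of the product in $\VTL_n$. Extending bilinearly shows that $\pi$ is a ring homomorphism with $\pi(z_k) = z$. Next I would identify the composite: by construction $\pi(F_i) = E_i$, since forgetting the labels of the cap-cup generator $F_i$ yields the Temperley--Lieb generator $E_i$ of $\VTL_n$, and $\pi(z_0) = z$. As $\iota$ sends $z \mapsto z_0$ and $E_i \mapsto F_i$, the composite $\pi \circ \iota \colon \TL_n \to \VTL_n$ fixes $z$ and sends $E_i$ to $E_i$; that is, $\pi \circ \iota$ is precisely the homomorphism of Proposition \ref{prop2_4}. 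That homomorphism is injective, so $\pi \circ \iota$ is injective, which forces $\iota$ to be injective.

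The only genuine obstacle is the multiplicativity check for $\pi$, and even that is mild: it amounts to the observation that collapsing all the variables $z_k$ to a single $z$ turns the labeled product of $\ATL_n$ into the unlabeled product of $\VTL_n$, because the cycle count $m$ and the composite tangle $E * E'$ are purely topological quantities, independent of the labels. Everything else in the argument is formal.
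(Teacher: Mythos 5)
Your proposal is correct and is essentially the paper's own argument: the paper likewise defines a forgetting homomorphism $\varphi\colon \ATL_n \to \VTL_n$ with $z_m \mapsto z$ and $F_i \mapsto E_i$ (and $w_i \mapsto v_i$, $t_j^{\pm 1} \mapsto 1$) and concludes injectivity of $\iota$ from injectivity of $\varphi \circ \iota$, which is the Kauffman embedding of Proposition \ref{prop2_4}. The only difference is that the paper obtains $\varphi$ from the presentations (Propositions \ref{prop2_2} and \ref{prop3_2}), whereas you verify multiplicativity directly on the diagram basis via the label-independence of the cycle count and of $E * E'$, a check that is correct and makes the argument independent of the presentation results.
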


\begin{proof}
We see from the presentations of $\VTL_n$ and $\ATL_n$ that there is a ring homomorphism $\varphi: \ATL_n \to \VTL_n$ which sends $z_m$ to $z$ for all $m \in \N$, $F_i$ to $E_i$ for all $i \in \{1, \dots, n-1\}$, $w_i$ to $v_i$ for all $i \in \{1, \dots, n-1\}$, and $t_j^{\pm 1}$ to $1$ for all $j \in \{1, \dots, n\}$.
By Proposition \ref{prop2_4} the composition $\varphi \circ \iota : \TL_n \to \VTL_n$ is injective, hence $\iota : \TL_n \to \ATL_n$ is also injective.
\end{proof}

Recall that $\ZZ^* = \{z_k\}_{k=1}^\infty$, $R^a = \Z [A^{\pm 1}, \ZZ^*]$, and that $R_0^a$ is embedded into $R^a$ via the identification $z_0 = -A^2 - A^{-2}$.
For each $n \ge 1$ we set $\ATL_n (R^a) =R^a \otimes \ATL_n$.
This is a $R^a$-algebra and a free $R^a$-module freely generated by $\FF_n$. 

\begin{thm}\label{thm3_4}
Let $n \ge 1$. 
There exists a homomorphism $\rho_n^a: R^a [\VB_n] \to \ATL_n (R^a)$ which sends $\sigma_i$ to $-A^{-2}\,1 - A^{-4} F_i$ and $\tau_i$ to $w_i$ for all $i \in \{1, \dots, n-1\}$.
\end{thm}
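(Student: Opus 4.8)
The plan is to mimic the proof of Theorem~\ref{thm2_5} verbatim, replacing $E_i$ by $F_i$, $v_i$ by $w_i$, and $z$ by $z_0$. Since $R^a[\VB_n]$ is the group $R^a$-algebra of $\VB_n$ and $\VB_n$ admits the Kamada--Vershinin presentation recalled in Section~\ref{sec1}, to construct $\rho_n^a$ it suffices to send each generator to an \emph{invertible} element of $\ATL_n(R^a)$ in such a way that all defining relations of $\VB_n$ are satisfied. I set $S_i = -A^{-2}\,1 - A^{-4}F_i$ and let $\rho_n^a(\sigma_i)=S_i$, $\rho_n^a(\tau_i)=w_i$.

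First I would check invertibility. Using the relation $F_i^2 = z_0 F_i = (-A^2-A^{-2})F_i$ from the presentation of $\ATL_n$ in Proposition~\ref{prop3_2}, a direct expansion gives
\[
(-A^{-2}\,1 - A^{-4}F_i)(-A^2\,1 - A^4 F_i) = 1 + (A^2+A^{-2})F_i + F_i^2 = 1\,,
\]
so $S_i$ is invertible with $S_i^{-1} = -A^2\,1 - A^4 F_i$, while $w_i$ is invertible because $w_i^2=1$.

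Next I would verify the defining relations. The relation $\tau_i^2=1$ becomes $w_i^2=1$, which holds. For $|i-j|\ge 2$ all three relations $S_iS_j=S_jS_i$, $w_iw_j=w_jw_i$, $w_iS_j=S_jw_i$ follow immediately from the commutation relations $F_iF_j=F_jF_i$, $w_iw_j=w_jw_i$, $F_iw_j=w_jF_i$ of Proposition~\ref{prop3_2}, since $S_i$ is a polynomial in $F_i$. For $|i-j|=1$ the relation $w_iw_jw_i=w_jw_iw_j$ is part of the presentation, and the mixed relation $\tau_i\tau_j\sigma_i=\sigma_j\tau_i\tau_j$ becomes $w_iw_jS_i=S_jw_iw_j$, which reduces by linearity to $w_iw_jF_i=F_jw_iw_j$, again part of the presentation of $\ATL_n$.

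The only genuinely computational point is the braid relation $S_iS_jS_i=S_jS_iS_j$ for $|i-j|=1$, and this is where the work lies. Expanding the triple product and collecting terms, the cubic term is $-A^{-12}F_iF_jF_i$, which I reduce to $-A^{-12}F_i$ using the relation $F_iF_jF_i=F_i$ established as Claim~2 in the proof of Proposition~\ref{prop3_2}; the term $F_i^2$ is simplified via $F_i^2=z_0F_i=(-A^2-A^{-2})F_i$. After these substitutions one obtains
\[
S_iS_jS_i = -A^{-6}\,1 - A^{-8}F_i - A^{-8}F_j - A^{-10}F_iF_j - A^{-10}F_jF_i\,,
\]
an expression symmetric in $i$ and $j$, so the same computation yields the identical value for $S_jS_iS_j$, proving the braid relation. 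This is exactly the calculation carried out in the proof of Theorem~\ref{thm2_5}, the only inputs being $F_i^2=z_0F_i$ and $F_iF_jF_i=F_i$, both available in $\ATL_n$. Having checked all relations on invertible elements, $\rho_n^a$ is a well-defined homomorphism. I do not anticipate any serious obstacle: the labels $t_j$ play no role here, and every needed identity has already been isolated in Proposition~\ref{prop3_2} and its proof.
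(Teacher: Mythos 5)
Your proposal is correct and follows essentially the same route as the paper, which simply observes that the proof of Theorem~\ref{thm2_5} carries over verbatim with $E_i$ replaced by $F_i$, $v_i$ by $w_i$, and $z$ by $z_0$: invertibility via $F_i^2=z_0F_i$, all relations except the braid relation read off from Proposition~\ref{prop3_2}, and the braid relation by the same expansion using $F_iF_jF_i=F_i$ from Claim~2 of that proposition's proof. You in fact spell out more detail than the paper does, including the correct sourcing of each identity.
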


\begin{proof}
The proof is almost identical to that of Theorem \ref{thm2_5}.
We set $S_i = -A^{-2}\,1 - A^{-4} F_i$.
It is easily checked as in the proof of Theorem \ref{thm2_5} that $(-A^{-2}\,1-A^{-4}F_i)(-A^2\,1-A^4F_i)=1$, hence $S_i$ is invertible and $S_i^{-1} = -A^2\,1-A^4F_i$.
For $i\in\{1,\dots,n-1\}$ we have $w_i^2=1$, hence $w_i$ is also invertible.
It remains to see that the following relations hold. 
\begin{gather*}
w_i^2 = 1\,, \quad \text{for } 1 \le i \le n-1\,,\\
S_i S_j = S_j S_i\,,\ w_i w_j = w_j w_i\,,\ w_i S_j = S_j w_i\,,\quad \text{for } |i-j| \ge 2\,,\\
S_i S_j S_i = S_j S_i S_j\,,\ w_i w_j w_i = w_j w_i w_j\,,\ w_i w_j S_i = S_j w_i w_j\,,\quad\text{for } |i-j| = 1\,.
\end{gather*}
The only relation which does not follow directly from the presentation of $\ATL_n(R^a)$ is $S_i S_j S_i = S_j S_i S_j$, for $|i-j |= 1$.
But the latter can be proved in the same way as in the proof of Theorem~\ref{thm2_5}.
\end{proof}

\begin{rem}
\begin{itemize}
\item[(1)]
The sequence $\{ \rho_n^a: R^a [\VB_n] \to \ATL_n(R^a)\}_{n=1}^\infty$ is compatible with the tower of algebras $\{\ATL_n(R^a)\}_{n=1}^\infty$.
\item[(2)]
As in the case of virtual Temperley--Lieb algebras (see Section \ref{sec2}), setting $\rho_n^a (\sigma_i) = -A^{-2}\,1 - A^{-4} F_i$ instead of $\rho_n^a (\sigma_i) = A\,1+A^{-1} F_i$ allows to include in $\rho_n^a$ the corrective with the writhe and to define directly the arrow polynomial without passing through the arrow Kauffman bracket. 
\item[(3)]
For each $n \ge 1$ and $\beta \in B_n$ we have $\rho_n^a (\beta) = \rho_n^f (\beta) \in \TL_n(R^f)$.
\end{itemize}
\end{rem}

Recall that $V_n = \{0, 1\} \times \{1, \dots, n\}$ is ordered by $(0,1) < (0,2) < \cdots < (0,n) < (1,n) < \cdots <(1,2) < (1,1)$. 
Let $E$ be a flat virtual tangle.
A \emph{cycle} of length $\ell$ in the \emph{closure} $\hat E$ of $E$ is a $\ell$-tuple $\hat \gamma = (\gamma_1, \dots, \gamma_\ell)$ in $E$, where $\gamma_i=\{ (a_i,b_{i-1}), (c_i,b_i) \}$, satisfying the following properties.
\begin{itemize}
\item[(a)]
$a_{i+1} = 1$ if $c_i = 0$, and $a_{i+1}=0$ if $c_i = 1$, for all $i \in \{1, \dots, \ell-1\}$.
\item[(b)]
$b_0=b_\ell < b_i$ for all $i \in \{1, \dots, \ell-1\}$, $a_1=0$, and $c_\ell = 1$.
\end{itemize}
Let $F=(E,f)$ be an arrow flat $n$-tangle. 
We define the \emph{parity} of an element $\alpha = \{ (a,b), (c,d) \} \in E$ by $\varpi(\alpha)=-1$ if $a=c$, and $\varpi (\alpha) = 1$ if $a \neq c$.
Let $\hat\gamma=(\gamma_1, \dots, \gamma_\ell)$ be a cycle in $\hat E$.
We write $\gamma_i = \{(a_i, b_{i-1}), (c_i, b_i)\}$ for all $i$, and we define the \emph{cumulated parity} of $\gamma_i$ relative to $\hat \gamma$ by $\varpi^c (\gamma_i) = \prod_{j=1}^{i-1} \varpi(\gamma_j)$ if $(a_i,b_{i-1}) < (c_i,b_i)$, and $\varpi^c(\gamma_i) = \prod_{j=1}^{i} \varpi(\gamma_j)$ if $(c_i, b_i) < (a_i, b_{i-1})$. 
Then we set 
\[
h(\hat\gamma)=\sum_{i=1}^\ell\varpi^c (\gamma_i)\, f (\gamma_i)\,.
\]
It is easily seen that $h(\hat\gamma)$ is an even number.
The \emph{number of zigzags} of $\hat \gamma$ is defined by $\zeta (\hat \gamma)=\frac{|h(\hat\gamma)|}{2}$.
Let $\hat \gamma_1, \dots, \hat \gamma_m$ be the cycles of $\hat E$.
Then we set 
\[
T_n'^a (F)=z_{\zeta(\hat \gamma_1)}\,z_{\zeta(\hat \gamma_2)} \cdots z_{\zeta(\hat \gamma_m)}\,.
\]
We define $T_n'^a: \ATL_n(R^a) \to R^a$ by extending linearly the map $T_n'^a: \FF_n \to R^a$.

\begin{expl}
The closure of the arrow flat tangle of Figure \ref{fig3_5} is illustrated in Figure \ref{fig3_8}.
Here $\hat E$ has a unique cycle $\hat \gamma$ and $h(\hat \gamma)=1-(-6)-3 = 4$, hence $T_n'^a(F)=z_2$.
\end{expl}

\begin{figure}[ht!]
\begin{center}
\includegraphics[width=4cm]{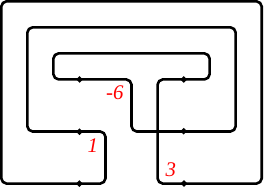}
\caption{Closure of an arrow flat tangle}\label{fig3_8}
\end{center}
\end{figure}

\begin{thm}\label{thm3_5}
The sequence $\{T_n'^a : \ATL_n(R^a) \to R^a\}_{n=1}^\infty$ is a Markov trace.
\end{thm}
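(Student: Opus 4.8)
The plan is to follow the proof of Theorem~\ref{thm2_6} line by line, the only new feature being that at every step we must keep track of the zigzag numbers of the closed curves and not merely their count. Writing $S_i = -A^{-2}\,1 - A^{-4}F_i$, so that $S_i^{-1} = -A^2\,1 - A^4 F_i$ by Theorem~\ref{thm3_4}, I would verify the four defining properties (1)--(4) of a Markov trace. By $R^a$-linearity it suffices to treat the case where the argument $x$ (and, for property~(1), also $y$) is a basis element, that is, an arrow flat $n$-tangle $F = (E,f) \in \FF_n$, embedded in $\ATL_{n+1}(R^a)$ as $F^\sharp$ when needed.

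First I would dispose of property~(1), the cyclic invariance. As in Theorem~\ref{thm2_6}, closing the concatenation of $F = (E,f)$ and $F' = (E',f')$ and closing the concatenation of $F'$ and $F$ produce two families of closed curves carried one onto the other by a planar isotopy of the closure annulus (a rotation). This isotopy transports the cusps along each curve, so corresponding curves carry the same cusps and hence, after reduction, the same number of zigzags. The point that needs care is that the combinatorial quantity $\zeta(\hat\gamma) = |h(\hat\gamma)|/2$ really is an invariant of the underlying closed curve-with-cusps, independent of the basepoint and of the direction of traversal used to compute the cumulated parities $\varpi^c$. Once this is isolated as a lemma, property~(1) is immediate, and the same invariance is what lets me freely apply property~(1) inside the verifications of (2)--(4).

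Next I would establish the arrow refinements of the three closure identities pictured in Figure~\ref{fig2_6}, namely
\[
T_{n+1}'^a (F^\sharp) = z_0 \, T_n'^a (F)\,,\quad T_{n+1}'^a (F^\sharp F_n) = T_n'^a (F)\,,\quad T_{n+1}'^a (F^\sharp w_n) = T_n'^a (F)\,,
\]
together with the mixed identities, such as $T_{n+1}'^a (F^\sharp w_{n-1} F_n)$ and $T_{n+1}'^a (F^\sharp F_n w_{n-1})$, expressed back in terms of $T_n'^a$. Here the extra strand of $F^\sharp$ carries the label $0$, so the new trivial curve contributes the factor $z_0 = -A^2 - A^{-2}$, exactly as $z$ did in the $f$-case; and the relation $F_i t_i^m F_i = z_{|m|} F_i$ of Proposition~\ref{prop3_2} is the tool that converts an accumulated label (a signed cusp count) of a merged arc into the correct zigzag factor $z_{|m|}$. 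Granting these identities, properties~(2), (3) and~(4) follow by running the exact algebraic expansions of $F^\sharp S_n$, of $F^\sharp S_n^{-1} w_{n-1} S_n$, and of $F^\sharp w_n w_{n-1} S_{n-1} w_n S_{n-1}^{-1} w_{n-1} w_n$ carried out in the proof of Theorem~\ref{thm2_6} (they are formally identical, with $F,w,t$ in place of $E,v$), and then using the arithmetic identity $-A^{-2} z_0 - A^{-4} = 1$ coming from $z_0 = -A^2 - A^{-2}$.

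The hard part will be the zigzag bookkeeping in properties~(3) and~(4), where the words involve the virtual generators $w_{n-1}, w_n$ interacting with the cap--cup generators $F_{n-1}, F_n$ inside the closure. In the $f$-polynomial setting one only had to count how many loops each term contributes; here each such term may additionally create cusps, and one must verify that the signed cusp counts picked up by the cumulated parities combine, via $F_i t_i^m F_i = z_{|m|} F_i$ and the conjugation relations $w_i t_i = t_{i+1} w_i$ and $F_i w_i = F_i t_i = F_i t_{i+1}^{-1}$, so as to reproduce precisely the zigzag numbers of the $n$-strand closure of the original tangle. The Interpretation following Figure~\ref{fig3_5} (labels $=$ cusps with a privileged side) is what guarantees that this combinatorial arithmetic matches the topological reduction of zigzags, and checking it uniformly across all the terms of the expansions is the main obstacle; once it is done, the four properties drop out exactly as in Theorem~\ref{thm2_6}.
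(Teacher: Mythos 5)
Your proposal takes essentially the same route as the paper: reduce by linearity to arrow flat tangles, prove cyclic invariance first via the invariance of $h(\hat\gamma)$ up to sign under change of basepoint and direction (the paper carries this out combinatorially, matching each long cycle of $E' \sqcup E$ with a rotation or reversal of a long cycle of $E \sqcup E'$), establish the three closure identities of Figure \ref{fig3_9} with the extra strand labeled $0$ contributing $z_0$, and then run the algebraic expansions of Theorem \ref{thm2_6}. One small correction to your plan: in property (4) the expansion is \emph{not} formally identical to the $f$-case, because $F_i w_i = F_i t_i$ rather than $F_i w_i = F_i$, so conjugating factors $t_n^{\pm 1}, t_{n-1}^{\pm 1}$ survive into the four terms; however, as your final paragraph anticipates, these are eliminated purely algebraically by the cyclicity already proved in (1) (e.g.\ $T_{n+1}'^a(F t_n^{-1} F_n t_n w_{n-1}) = T_n'^a(t_n w_{n-1} F t_n^{-1}) = T_n'^a(F w_{n-1})$), so neither the relation $F_i t_i^m F_i = z_{|m|} F_i$ nor any further per-term zigzag bookkeeping is actually needed beyond the three closure identities.
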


\begin{proof}
For each $n \ge 2$ and each $i \in \{1, \dots, n-1\}$ we set $S_i=-A^{-2} \,1 - A^{-4} F_i$.
We need to prove that the following equalities hold.
\begin{itemize}
\item[(1)]
$T_n'^{a} (xy) = T_n'^{a} (yx)$ for all $n \ge 1$ and all $x,y\in \ATL_n (R^a)$;
\item[(2)]
$T_n'^{a} (x) = T_{n+1}'^{a} (x S_n)= T_{n+1}'^{a} (xS_n^{-1}) = T_{n+1}'^{a} (xw_n)$ for all $n \ge 1$ and all $x \in \ATL_n (R^a)$;
\item[(3)]
$T_n'^{a} (x) = T_{n+1}'^{a} (x S_n^{-1} w_{n-1} S_n)$ for all $n \ge 2$ and all $x \in \ATL_n (R^a)$;
\item[(4)]
$T_n'^{a} (x) = T_{n+1}'^a (x w_n w_{n-1} S_{n-1} w_n S_{n-1}^{-1} w_{n-1} w_n)$ for all $n \ge 2$ and all $x \in \ATL_n (R^a)$.
\end{itemize}

{\it Proof of (1).}
We can assume that $x = F = (E,f)$ and $y = F' = (E',f')$ are arrow flat $n$-tangles.  
As in the definition of the multiplication in $\ATL_n$, for $\alpha \in E \sqcup E'$, we set $f^* (\alpha) = f (\alpha)$ if $\alpha \in E$ and $f^* (\alpha) = f' (\alpha)$ if $\alpha \in E'$.
A \emph{long cycle} of length $2p$ in $E \sqcup E'$ is a $2p$-tuple $\hat \gamma = (\gamma_1, \dots, \gamma_{2p})$ in $E \sqcup E'$, where $\gamma_i = \{ (a_i, b_{i-1}), (c_i, b_i) \}$, satisfying the following properties. 
\begin{itemize}
\item[(a)]
$\gamma_i \in E$ if $i$ is odd, $\gamma_i \in E'$ if $i$ is even, $a_{i+1}=1$ if $c_i=0$, $a_{i+1} = 0$ if $c_i = 1$ (The indices are taken in $\{1, \dots, 2p\}$ modulo $2p$. In particular, $b_{2p}=b_0$).
\item[(b)]
$(a_1,b_0)  < (a_i, b_{i-1})$ for all $i \in \{3, 5, \dots, 2p-1\}$ and $(a_1,b_0) < (c_i,b_i)$ for all $i \in \{1, 3, \dots, 2p-1\}$.
\end{itemize}
The \emph{cumulated parity} of $\gamma_i$ relative to $\hat \gamma$ is $\varpi^c (\gamma_i) = \prod_{j=1}^{i-1} \varpi (\gamma_j)$ if $(a_i,b_{i-1}) < (c_i,b_i)$ and $\varpi^c (\gamma_i) = \prod_{j=1}^i \varpi (\gamma_j)$ if $(a_i, b_{i-1}) > (c_i, b_i)$.
We set
\[
h (\hat \gamma) = \sum_{i=1}^{2p} \varpi^c (\gamma_i) \, f^* (\gamma_i) \,.
\]
We see that $h (\hat \gamma)$ is an even number.
Then we define the \emph{number of zigzags} of $\hat \gamma$ by $\zeta (\hat \gamma) = \frac{|h (\hat \gamma)|}{2}$.
Let $\hat \gamma_1, \dots, \hat \gamma_m$ be the long cycles of $E \sqcup E'$.
Then
\[
T_n'^a (FF') = z_{\zeta (\hat \gamma_1)} z_{\zeta (\hat \gamma_2)} \cdots z_{\zeta (\hat \gamma_m)} \,.
\]
Let $\hat \gamma = (\gamma_1, \dots, \gamma_{2p})$ be a long cycle of $E \sqcup E'$.
There exists a unique long cycle $\hat \gamma'$ of $E' \sqcup E$ of one the following forms
\[
(\gamma_i, \gamma_{i+1}, \dots, \gamma_{2p}, \gamma_1, \gamma_2, \dots, \gamma_{i-1}) \text{ or }
(\gamma_i, \gamma_{i-1}, \dots, \gamma_1, \gamma_{2p}, \gamma_{2p-1}, \dots, \gamma_{i+1})\,,
\]
with $i \in \{2, 4, \dots, 2p\}$. 
In addition, each long cycle of $E' \sqcup E$ is of this form, and $h (\hat \gamma') = \pm h (\hat \gamma)$, hence $\zeta (\hat \gamma') = \zeta (\hat \gamma)$.
We conclude that $T_n'^a (FF') = T_n'^a (F'F)$.

{\it Proof of (2).}
From now on the proof of Theorem \ref{thm3_5} is almost identical to that of Theorem \ref{thm2_6}. 
We can assume that $x=F=(E,f)$ is an arrow flat $n$-tangle.
We see in Figure \ref{fig3_9} that the following equalities hold.
\[
T_{n+1}'^a (F) = z_0 \, T_n'^a (F)\,,\ T_{n+1}'^a (FF_n) = T_n'^a (F) \,, \ T_{n+1}'^a (Fw_n) = T_n'^a (F)\,.
\]
Recall that $S_n^{-1} = -A^2 \, 1 - A^4 F_n$ (see the proof of Theorem \ref{thm3_4}).
It follows that
\begin{gather*}
T_{n+1}'^a (FS_n )=
-A^{-2} T_{n+1}'^a (F) - A^{-4} T_{n+1}'^a (FF_n)=
(-A^{-2} z_0 - A^{-4}) T_n'^a (F)=
T_n'^a(F)\,,\\
T_{n+1}'^a (FS_n^{-1}) =
-A^2 T_{n+1}'^a (F) -A^4 T_{n+1}'^a (FF_n) =
(-A^2 z_0 - A^4) T_n'^a(F)=
T_n'^a(F)\,.
\end{gather*}

\begin{figure}[ht!]
\begin{center}
\includegraphics[width=13.2cm]{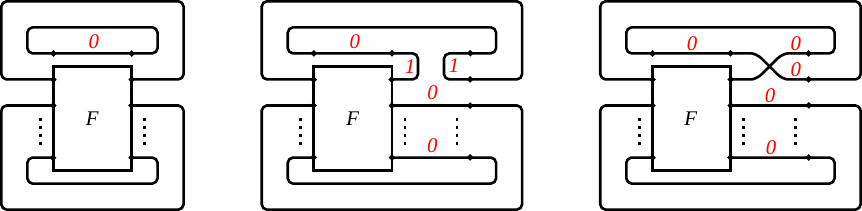}
\caption{$n+1$-closure of $F$, $FF_n$, and $Fw_n$}\label{fig3_9}
\end{center}
\end{figure}

{\it Proof of (3).}
We can again assume that $x = F = (E,f)$ is a flat $n$-tangle.
We have
\begin{gather*}
F S_n^{-1} w_{n-1} S_n =
F (-A^2\,1-A^4 F_n) w_{n-1} (-A^{-2}\,1-A^{-4} F_n)=\\
F w_{n-1} + A^{2} F F_n w_{n-1} + A^{-2} F w_{n-1} F_n + F F_n w_{n-1} F_n =\\
F w_{n-1} + A^{2} F F_n w_{n-1} + A^{-2} F w_{n-1} F_n + F F_n\,.
\end{gather*}
By the above, it follows that
\begin{gather*}
T_{n+1}'^a (F S_n^{-1} w_{n-1} S_n) =\\
T_{n+1}'^a (F w_{n-1}) + A^{-2} T_{n+1}'^a (F w_{n-1} F_n) + A^{2} T_{n+1}'^a (F F_n w_{n-1}) + T_{n+1}'^a (F F_n) =\\
z_0 T_n'^a (F w_{n-1}) + A^{-2} T_n'^a (F w_{n-1}) + A^{2} T_{n+1}'^a (w_{n-1} F F_n) + T_n'^a (F) =\\
(-A^{2} - A^{-2}) T_n'^a (F w_{n-1}) + A^{-2} T_n'^a (F w_{n-1}) + A^{2} T_n'^a (w_{n-1} F) + T_n'^a (F) =\\
(-A^{2} - A^{-2}) T_n'^a (F w_{n-1}) + A^{-2} T_n'^a (F w_{n-1}) + A^{2} T_n'^a (F w_{n-1}) + T_n'^a (F) =
T_n'^a (F)\,.
\end{gather*}

{\it Proof of (4).}
We can again assume that $x = F = (E,f)$ is an arrow flat $n$-tangle.
Then 
\begin{gather*}
F w_n w_{n-1} S_{n-1} w_n S_{n-1}^{-1} w_{n-1} w_n =\\
F w_n w_{n-1} (-A^{-2}\,1 - A^{-4}F_{n-1}) w_n (-A^{2}\,1 - A^{4}F_{n-1}) w_{n-1} w_n =\\
(F w_n w_{n-1} w_n w_{n-1} w_n) + 
A^{2} (F w_n w_{n-1} w_n F_{n-1} w_{n-1} w_n) +\\
A^{-2} (F w_n w_{n-1} F_{n-1} w_n w_{n-1} w_n) +
(F w_n w_{n-1} F_{n-1} w_n F_{n-1} w_{n-1} w_n) =\\
(F w_{n-1} w_n w_{n-1} w_{n-1} w_n) + 
A^{2} (F w_n F_n w_{n-1} w_n w_{n-1} w_n) +\\
A^{-2} (F w_n w_{n-1} w_n w_{n-1} F_n w_n) +
(F w_n w_{n-1} F_{n-1} w_{n-1} w_n) =\\
(F w_{n-1}) + 
A^{2} (F w_n F_n w_{n-1} w_{n-1} w_n w_{n-1}) +\\
A^{-2} (F w_{n-1} w_n w_{n-1} w_{n-1} F_n w_n) +
(F w_n t_{n-1}^{-1} F_{n-1} t_{n-1} w_n) =\\
(F w_{n-1}) + 
A^{2} (F w_n F_n w_n w_{n-1}) +
A^{-2} (F w_{n-1} w_n F_n w_n) +
(F t_{n-1}^{-1} w_n F_{n-1} w_n t_{n-1}) =\\
(F w_{n-1}) + 
A^{2} (F t_{n}^{-1} F_n t_{n} w_{n-1}) +
A^{-2} (F w_{n-1} t_{n}^{-1} F_n t_{n}) +
(F t_{n-1}^{-1} w_{n-1} F_n w_{n-1} t_{n-1})\,.
\end{gather*}
By the above, it follows that
\begin{gather*}
T_{n+1}'^{a} (F w_n w_{n-1} S_{n-1} w_n S_{n-1}^{-1} w_{n-1} w_n) =\\
T_{n+1}'^{a} (F w_{n-1}) + 
A^{2} T_{n+1}'^{a} (F t_{n}^{-1} F_n t_{n} w_{n-1}) +\\
A^{-2} T_{n+1}'^{a} (F w_{n-1} t_{n}^{-1} F_n t_{n}) +
T_{n+1}'^{a} (F t_{n-1}^{-1} w_{n-1} F_n w_{n-1} t_{n-1} ) =\\
z_0 T_n'^{a} (F w_{n-1}) + 
A^{2} T_{n+1}'^{a} (t_{n} w_{n-1} F t_{n}^{-1} F_n) +\\
A^{-2} T_{n+1}'^{a} (t_{n} F w_{n-1} t_{n}^{-1} F_n) +
T_{n+1}'^{a} (w_{n-1} t_{n-1} F t_{n-1}^{-1} w_{n-1} F_n) =\\
(-A^{2}-A^{-2}) T_n'^{a} (F w_{n-1}) + 
A^{2} T_n'^{a} (t_{n} w_{n-1} F t_{n}^{-1}) +\\
A^{-2} T_n'^{a} (t_{n} F w_{n-1} t_{n}^{-1}) +
T_n'^{a} (w_{n-1} t_{n-1} F t_{n-1}^{-1} w_{n-1}) =\\
(-A^{2}-A^{-2}) T_n'^{a} (F w_{n-1}) + 
A^{2} T_n'^{a} (F t_{n}^{-1} t_{n} w_{n-1}) +\\
A^{-2} T_n'^{a} (F w_{n-1} t_{n}^{-1} t_{n}) +
T_n'^{a} (F t_{n-1}^{-1} w_{n-1} w_{n-1} t_{n-1}) =\\
(-A^{2}-A^{-2}) T_n'^{a} (F w_{n-1}) + 
A^{2} T_n'^{a} (F w_{n-1}) +
A^{-2} T_n'^{a} (F w_{n-1}) +
T_n'^{a} (F) =
T_n'^{a} (F)\,.\quad\proved
\end{gather*}
\end{proof}

\begin{corl}\label{corl3_6}
For each $n \ge 1$ we set $T_n^a = T_n'^a \circ \rho_n^a :R^a [\VB_n] \to R^a$.
Then the sequence $\{T_n^a: R^a [\VB_n] \to R^a\}_{n=1}^\infty$ is a Markov trace.
\end{corl}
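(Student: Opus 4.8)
The plan is to invoke the general principle spelled out in the introduction: if $\{A_n\}_{n=1}^\infty$ is a tower of algebras, $\{\rho_n : R[\VB_n] \to A_n\}_{n=1}^\infty$ is a compatible sequence of homomorphisms, and $\{T_n' : A_n \to R\}_{n=1}^\infty$ is a Markov trace on that tower, then $\{T_n' \circ \rho_n\}_{n=1}^\infty$ is automatically a Markov trace on $\{R[\VB_n]\}_{n=1}^\infty$. Here I would take $R = R^a$, $A_n = \ATL_n(R^a)$, $\rho_n = \rho_n^a$, and $T_n' = T_n'^a$. All three hypotheses have already been secured: $\{\ATL_n(R^a)\}_{n=1}^\infty$ is a tower of algebras, the sequence $\{\rho_n^a\}_{n=1}^\infty$ is compatible with it by Theorem \ref{thm3_4} together with part (1) of the remark that follows it, and $\{T_n'^a\}_{n=1}^\infty$ is a Markov trace by Theorem \ref{thm3_5}. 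This is precisely the situation already met in Corollary \ref{corl2_7} for the $f$-polynomial, and the argument here is identical.

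Spelling out the mechanism, I would set $S_i = \rho_n^a(\sigma_i) = -A^{-2}\,1 - A^{-4} F_i$ and $w_i = \rho_n^a(\tau_i)$, and then verify each of the four axioms (a)--(d) for $T_n^a$ by pushing the corresponding axiom for $T_n'^a$ through the homomorphism $\rho_n^a$. For (a), given $x,y \in R^a[\VB_n]$, the fact that $\rho_n^a$ is a homomorphism gives $T_n^a(xy) = T_n'^a(\rho_n^a(x)\,\rho_n^a(y))$, and the trace property of $T_n'^a$ yields $T_n'^a(\rho_n^a(x)\,\rho_n^a(y)) = T_n'^a(\rho_n^a(y)\,\rho_n^a(x)) = T_n^a(yx)$. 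For (b)--(d), I would exploit the compatibility $\rho_{n+1}^a|_{R^a[\VB_n]} = \rho_n^a$ to rewrite, for $x \in R^a[\VB_n]$, expressions such as $\rho_{n+1}^a(x\sigma_n) = \rho_n^a(x)\,S_n$, $\rho_{n+1}^a(x\sigma_n^{-1}\tau_{n-1}\sigma_n) = \rho_n^a(x)\,S_n^{-1} w_{n-1} S_n$, and the analogous identity for the length-eight word of axiom (d); applying the matching axiom for $T_n'^a$ then delivers the required equality for $T_n^a$ in each case.

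There is no genuine obstacle here: the statement is a purely formal consequence of the three cited results, and the only point requiring any care is the bookkeeping that matches each braid word in $\VB_{n+1}$ with its image under $\rho_{n+1}^a$, which is immediate because $\rho^a$ is a homomorphism and the two towers are compatible.
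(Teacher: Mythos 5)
Your proposal is correct and follows exactly the route the paper intends: Corollary \ref{corl3_6} is stated without proof precisely because it is the instance (with $R = R^a$, $A_n = \ATL_n(R^a)$, $\rho_n = \rho_n^a$, $T_n' = T_n'^a$) of the general principle from the introduction, whose hypotheses are supplied by Theorem \ref{thm3_4}, the compatibility remark following it, and Theorem \ref{thm3_5}. Your explicit verification of axioms (a)--(d) via the homomorphism property and compatibility is just the unwinding of that principle, so nothing is missing.
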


Recall that $R^f = \Z [A^{\pm 1}]$ is a subring of $R^a = \Z [A^{\pm 1}, \ZZ^*]$ and that, for $n \ge 1$, we have an embedding $\iota_n : \TL_n (R^f) \to \ATL_n (R^a)$ which sends $E_i$ to $F_i$ for all $i \in \{1, \dots, n-1\}$ (see Proposition \ref{prop3_3}).
The next proposition is a version of Theorem \ref{thm3_1}\,(2) in terms of Temperley--Lieb algebras. 

\begin{prop}\label{prop3_7}
Let $n \ge 1$.
Then $T_n'^a (\iota_n(x)) = T_n'^f (x) \in R^f$ for all $x \in \TL_n (R^f)$.
\end{prop}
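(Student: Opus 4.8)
The plan is to avoid computing $\iota_n$ on a basis of $\TL_n(R^f)$ and instead argue abstractly that both sides are Markov traces on the Temperley--Lieb tower obeying the \emph{same} reduction rules, and then invoke uniqueness. Write $L_n = T_n'^f|_{\TL_n(R^f)}$ and $M_n = T_n'^a \circ \iota_n$. Since $\iota_n$ is an $R^f$-linear algebra homomorphism (it is induced by $\iota:\TL_n\to\ATL_n$, $z\mapsto z_0=z$, $E_i\mapsto F_i$, together with $R^f\subset R^a$) and $T_n'^a$ is an $R^a$-linear trace, the map $M_n:\TL_n(R^f)\to R^a$ is an $R^f$-linear trace: $M_n(xy)=T_n'^a(\iota_n(x)\iota_n(y))=T_n'^a(\iota_n(y)\iota_n(x))=M_n(yx)$. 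Likewise $L_n$ is an $R^f$-linear trace by Theorem~\ref{thm2_6}. Proving $M_n=L_n$ will simultaneously show that $M_n$ takes values in $R^f$, which is exactly the content of the statement.

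First I would record, for both families, the normalization and the two tower reductions. From the proof of Theorem~\ref{thm2_6} (Figure~\ref{fig2_6}) one has $T_{n+1}'^f(E)=z\,T_n'^f(E)$ and $T_{n+1}'^f(EE_n)=T_n'^f(E)$ for $E\in\VTL_n(R^f)$, hence for $x\in\TL_n(R^f)$,
\[
L_{n+1}(x)=z\,L_n(x)\,,\qquad L_{n+1}(xE_n)=L_n(x)\,,
\]
together with $L_1(1)=z$. The embeddings $\iota:\TL_m\to\ATL_m$ are compatible with the two towers, so the restriction of $\iota_{n+1}$ to $\TL_n(R^f)$ is $\iota_n$ followed by $\ATL_n(R^a)\hookrightarrow\ATL_{n+1}(R^a)$. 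Using the identities $T_{n+1}'^a(F)=z_0\,T_n'^a(F)$ and $T_{n+1}'^a(FF_n)=T_n'^a(F)$ for $F\in\ATL_n(R^a)$ established in the proof of Theorem~\ref{thm3_5} (Figure~\ref{fig3_9}), applied to $F=\iota_n(x)$, and the identification $z_0=z$, I obtain the matching relations
\[
M_{n+1}(x)=z\,M_n(x)\,,\qquad M_{n+1}(xE_n)=M_n(x)\,,
\]
and the base case $M_1(1)=T_1'^a(1)=z_0=z=L_1(1)$ (the closure of the trivial arrow $1$-tangle is a single cycle with $h=0$).

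Finally I would invoke the uniqueness of the Markov trace on the Temperley--Lieb tower: an $R^f$-linear trace on $\{\TL_m(R^f)\}_m$ is determined by its value on $\TL_1$ together with the two reductions above. Concretely, for $E\in\EE_n^0$ the trace property lets one bring the arc meeting the largest boundary point into standard position; such a non-crossing diagram is either split off by a through strand, so that the first reduction drops to $\TL_{n-1}$ with a factor $z$, or it factors through $E_{n-1}$, so that the second reduction applies, and the recursion terminates at $\TL_1$. Since $L_n$ and $M_n$ agree on $\TL_1$ and satisfy the same two reductions, they coincide, which proves the proposition and shows that $M_n$ is $R^f$-valued.

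I expect the only genuinely delicate point to be the justification of this uniqueness/termination, i.e.\ the standard Temperley--Lieb diagram reduction; everything else is bookkeeping transported from Sections~\ref{sec2} and~\ref{sec3}. I would also stress the payoff: combined with $\rho_n^a(\beta)=\iota_n(\rho_n^f(\beta))$ for $\beta\in B_n$, the proposition yields $\overrightarrow{f}(\hat\beta)=T_n^a(\beta)=T_n'^a(\iota_n(\rho_n^f(\beta)))=T_n'^f(\rho_n^f(\beta))=f(\hat\beta)$, a Markov-trace proof of Theorem~\ref{thm3_1}(2); for this reason I would deliberately prove Proposition~\ref{prop3_7} without using Theorem~\ref{thm3_1}(2), so as not to argue in a circle. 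A more hands-on alternative is to compute $\iota_n(E)$ directly for $E\in\EE_n^0$: one checks it is a single non-crossing labeled tangle $(E,f_E)$ with unit coefficient (no $z_k$ arise, since loops form in $\ATL_n$-multiplication exactly when they do in $\VTL_n$-multiplication via $\varphi$), and then that every closure cycle has $h(\hat\gamma)=0$; but this cusp-cancellation argument is more laborious than the trace-uniqueness route.
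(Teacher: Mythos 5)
Your proof is correct, but it takes a genuinely different route from the paper's. The paper proves a \emph{stronger} statement: it introduces the parity subalgebra $\VTL_n^\nu \subset \VTL_n$, spanned by tangles $E$ whose arcs $\{(a,b),(c,d)\}$ have $d-b$ odd when $a=c$ and even when $a\neq c$, equips each such tangle with canonical labels ($f(\alpha)=|d-b|$, resp.\ $d-b$), and shows by a direct computation of cumulated parities (Lemma \ref{lem3_8}) that the resulting map $\iota_n^\nu:\VTL_n^\nu\to\ATL_n$ is a ring homomorphism and that every cycle arising in products and in closures has $h(\hat\gamma)=0$, hence zigzag number zero; Proposition \ref{prop3_7} then falls out because $\TL_n(R^f)\subset\VTL_n^\nu(R^f)$ and $\iota_n$ agrees with $\iota_n^\nu$ on the generators $E_i$. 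You instead bypass all the cusp/parity combinatorics: you verify that $L_n=T_n'^f|_{\TL_n(R^f)}$ and $M_n=T_n'^a\circ\iota_n$ are traces obeying the same two tower reductions (these are indeed available, by linearity, from the proofs of Theorems \ref{thm2_6} and \ref{thm3_5}; and your compatibility check $\iota_{n+1}|_{\TL_n(R^f)}=\sharp\circ\iota_n$ is correct, since $F_i^\sharp$ is again the generator $F_i$ of $\ATL_{n+1}$) and the same value $z_0=z$ at $n=1$, and you conclude by uniqueness via the decomposition $\TL_{n+1}=\TL_n+\TL_n E_n\TL_n$: a non-crossing diagram either lies in $\TL_n$ (through strand at the last position, first reduction with factor $z$) or equals $xE_ny$ with $x,y\in\TL_n$, where the trace property and the second reduction give $L_{n+1}(xE_ny)=L_n(yx)$, so induction on $n$ finishes, and $M_n=L_n$ in particular forces $M_n$ to be $R^f$-valued. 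What each approach buys: the paper's computation yields the stronger Lemma \ref{lem3_8} on the strictly larger algebra $\VTL_n^\nu$ (which contains crossing tangles as well) together with an explicit labeled lift, at the cost of lengthy parity bookkeeping; yours is shorter and softer, but it leans on the standard Jones normal form fact that $E_n$ occurs at most once in a reduced Temperley--Lieb word (equivalently the displayed decomposition), which is nowhere proved in this paper and should be cited or proved --- you correctly flag this as the delicate point, and your non-circularity remark about Theorem \ref{thm3_1}\,(2) matches the paper's intent.

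One caveat on your ``hands-on alternative'': the parenthetical justification ``no $z_k$ arise, since loops form in $\ATL_n$-multiplication exactly when they do in $\VTL_n$-multiplication'' is too quick. Loops do form at the same places, but in $\ATL_n$ each one contributes $z_{\zeta(\hat\gamma)}$, so obtaining unit (or $z_0$) coefficients requires checking $h(\hat\gamma)=0$ for \emph{multiplication} cycles as well as for closure cycles; that verification is precisely the content of the computations in the paper's Lemma \ref{lem3_8}, so this alternative, done honestly, collapses back into the paper's argument.
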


We denote by $\EE_n^\nu$ the set of flat virtual $n$-tangles $E \in \EE_n$ such that, for each $\alpha = \{ (a,b), (c,d) \} \in E$, $d-b$ is odd if $a=c$, and $d-b$ is even if $a \neq c$. 
We denote by $\VTL_n^\nu$ the $R_0^f$-submodule of $\VTL_n$ spanned by $\EE_n^\nu$.
For each $E \in \EE_n^\nu$ we define $\iota_n^\nu (E) =(E,f) \in \FF_n$ as follows.
Let $\alpha=\{ (a,b), (c,d) \} \in E$.
If $a=c$, we set $f(\alpha)=|d-b|$, and if $a=0$ and $c=1$, we set $f(\alpha)=d-b$.
We define $\iota_n^\nu : \VTL_n^\nu \to \ATL_n$ by identifying $z$ with $z_0$ and extending linearly the map $\iota_n^\nu : \EE_n^\nu \to \FF_n$.
The key point in the proof of Proposition \ref{prop3_7} is the following. 

\begin{lem}\label{lem3_8}
Let $n \ge 1$.
\begin{itemize}
\item[(1)]
$\VTL_n^\nu$ is 	a subalgebra of $\VTL_n$ and $\iota_n^\nu : \VTL_n^\nu \to \ATL_n$ is a ring homomorphism.
\item[(2)]
Let $\VTL_n^\nu (R^f) = R^f \otimes \VTL_n^\nu$, and let $\iota_n^\nu : \VTL_n^\nu (R^f) \to \ATL_n (R^a)$ be the homomorphism induced by $\iota_n^\nu : \VTL_n^\nu \to \ATL_n$.
Then $T_n'^a (\iota_n^\nu (x)) = T_n'^f (x) \in R^f$ for all $x\in \VTL_n^\nu (R^f)$.
\end{itemize}
\end{lem}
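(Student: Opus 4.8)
The plan is to reduce both parts to a single principle: under $\iota_n^\nu$ the label of an arc records, up to sign, the difference of the heights of its two endpoints, and height is preserved by the gluings used both in the product $*$ and in the closure. First I record the dictionary: for $E \in \EE_n^\nu$ and $\alpha = \{(a,b),(c,d)\} \in E$, the label assigned by $\iota_n^\nu$ is $|d-b|$ when $a=c$ and $d-b$ when $a \neq c$; in particular its parity is that of $d-b$, which is exactly the defining condition of $\EE_n^\nu$ and guarantees $\iota_n^\nu(E) \in \FF_n$. For part (1), let $E, E' \in \EE_n^\nu$ and consider an arc $\hat\alpha = (\alpha_1, \dots, \alpha_\ell)$ of $E \sqcup E'$, with endpoints at heights $b_0$ and $b_\ell$. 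A same-side piece (one with $\varpi(\alpha_i) = -1$) of a tangle in $\EE_n^\nu$ has odd height-span, while a through piece ($\varpi(\alpha_i)=1$) has even span, so $b_\ell - b_0 \equiv \#\{i : \varpi(\alpha_i) = -1\} \pmod 2$; since $\partial\hat\alpha$ is same-side exactly when this count is odd, the parity of $b_\ell - b_0$ matches the same-side/through type of $\partial\hat\alpha$. Hence $E * E' \in \EE_n^\nu$, and as the identity tangle lies in $\EE_n^\nu$ and the scalar $z^m$ in $EE' = z^m(E*E')$ lies in $R_0^f$, the module $\VTL_n^\nu$ is closed under multiplication and is a subalgebra.

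The heart of the matter is the telescoping identity $\varpi^c(\alpha_i)\,f^*(\alpha_i) = s\,(b_i - b_{i-1})$ for a sign $s \in \{\pm 1\}$ constant along $\hat\alpha$; granting it, $g(\partial\hat\alpha) = \sum_i \varpi^c(\alpha_i) f^*(\alpha_i) = s\,(b_\ell - b_0)$ collapses to the height difference of the endpoints of $\partial\hat\alpha$. Two consequences follow. First, $g(\partial\hat\alpha)$ is then exactly the label that $\iota_n^\nu$ assigns to the arc $\partial\hat\alpha$ of $E*E'$, so $F * F' = \iota_n^\nu(E*E')$, where $F = \iota_n^\nu(E)$ and $F' = \iota_n^\nu(E')$. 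Second, for a cycle $\hat\gamma = (\gamma_1, \dots, \gamma_{2p})$ one has $b_{2p} = b_0$, whence $h(\hat\gamma) = s\,(b_{2p} - b_0) = 0$ and $\zeta(\hat\gamma) = 0$. Therefore, if $m$ is the number of cycles of $E \sqcup E'$, then $\iota_n^\nu(E E') = \iota_n^\nu\big(z^m (E * E')\big) = z_0^m\,(F * F') = \prod_{j} z_{\zeta(\hat\gamma_j)}\,(F*F') = F\,F'$, proving that $\iota_n^\nu$ is a ring homomorphism.

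For part (2), by $R^f$-linearity it suffices to take $x = E \in \EE_n^\nu$. Then $T_n'^f(E) = z^{t_n(E)}$, where $t_n(E)$ is the number of closed curves of the closure $\hat E$, whereas $T_n'^a(\iota_n^\nu(E)) = \prod_j z_{\zeta(\hat\gamma_j)}$ ranges over the cycles $\hat\gamma_j$ of $\hat E$. The closure arcs join $(0,i)$ to $(1,i)$ and hence preserve height, so the same telescoping identity applied around each closure cycle gives $h(\hat\gamma_j) = 0$ and $\zeta(\hat\gamma_j) = 0$. Since the cycles of $\hat E$ are precisely its closed curves, $T_n'^a(\iota_n^\nu(E)) = z_0^{t_n(E)} = z^{t_n(E)} = T_n'^f(E) \in R^f$, as required.

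The main obstacle is establishing the telescoping identity, which is pure sign bookkeeping. One must follow the recursive definition of the cumulated parity $\varpi^c$ together with the two sign conventions of $\iota_n^\nu$ (absolute value on same-side arcs, signed difference on through arcs) and the fact that the order on $V_n$ increases with height on the left but decreases with height on the right. The key observation is that $\varpi^c$ changes sign exactly at a same-side piece, which is precisely where the local identification of ``increasing height'' with ``increasing $V_n$-order'' reverses; this makes the signed height increments $s(b_i - b_{i-1})$ accumulate coherently. Around a cycle the number of same-side pieces is even, so there is no monodromy in $\varpi^c$ and the sum genuinely telescopes to zero.
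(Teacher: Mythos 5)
Your proposal is correct and follows essentially the same route as the paper: the paper's proof is precisely a case-by-case verification of your telescoping identity $\varpi^c(\alpha_i)\,f^*(\alpha_i) = s\,(b_i - b_{i-1})$, carried out separately for arcs with both endpoints on the left, both on the right, and one on each side, and then for cycles of $E \sqcup E'$ and of the closure $\hat E$, with the sum collapsing to the endpoint height difference (matching the label conventions of $\iota_n^\nu$, including the sign $s=+1$ from the left endpoint and the absolute values on same-side arcs) or to zero. You have isolated the right mechanism --- $\varpi^c$ flips exactly at same-side pieces, where the identification of the $V_n$-order with height reverses --- and the remaining bookkeeping you defer is exactly what the paper's casework supplies, so there is no gap in the approach.
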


\begin{proof}
Let $E, E' \in \EE_n^\nu$.
Set $\iota_n^\nu (E) = (E,f)$, $\iota_n^\nu (E') = (E', f')$, and $\iota_n^\nu (E) * \iota_n^\nu (E') = (E * E', g)$.
We first show that $E * E' \in \EE_n^\nu$ and that $(E * E', g)=\iota_n^\nu (E * E')$.
Let $\hat \alpha$ be an arc in $E \sqcup E'$ of length $\ell$, and let $\partial \hat \alpha = \{ x', y' \}$, with $x'=(a,b) < y'=(c,d)$.

Suppose $a = c = 0$ and $\ell = 1$.
Then $\partial \hat \alpha = \alpha_1 = \{ (0,b), (0,d) \}$, $d-b$ is odd, and $g (\partial \hat \alpha) = d-b = |d-b|$.

Suppose $a = c = 0$ and $\ell >1$.
Then $\ell$ is odd, say $\ell = 2 p + 1$.
There exists a sequence $a_0, a_1, \dots, a_{2p+1}$ in $\{ 1, \dots, n \}$ such that $\hat \alpha$ is of the form $\hat \alpha = (\alpha_0, \beta_1, \gamma_2, \dots, \gamma_{2p-2}, \beta_{2p-1}, \alpha_{2p} )$, where $\alpha_0 = \{ (0,a_0), (1, a_1) \} \in E$, $\beta_i = \{(0, a_i), (0, a_{i+1}) \} \in E'$ for all $i\in\{1,3, \dots, 2p-1\}$, $\gamma_i = \{ (1,a_i), (1,a_{i+1}) \} \in E$ for all $i \in \{2,4, \dots, 2p-2 \}$, and $\alpha_{2p} = \{ (1, a_{2p}), (0,a_{2p+1}) \} \in E$.
We also have $x'=(0,a_0)$, $y'=(0,a_{2p+1})$, and $a_0=b < a_{2p+1}=d$.
The numbers $a_1 - a_0$ and $a_{2p+1} - a_{2p}$ are even, and the number $a_{i+1} - a_i$ is odd for every $i \in \{1, \dots, 2p-1\}$, hence $a_{2p+1} - a_0=d-b$ is odd. 
We have $\varpi^c (\alpha_0)=1$, hence $\varpi^c (\alpha_0)\, f^*(\alpha_0) = a_1 - a_0$.
Let $i \in \{1,3, \dots, 2p-1\}$.
We have $\varpi^c (\beta_i) = 1$ if $a_i < a_{i+1}$ and $\varpi^c (\beta_i) = -1$ if $a_i > a_{i+1}$.
In both cases we have $\varpi^c (\beta_i)\, f^* (\beta_i) = a_{i+1} - a_i$.
Let $i \in \{2, 4, \dots, 2p-2 \}$.
We have $\varpi^c(\gamma_i) = 1$ if $a_i < a_{i+1}$ and $\varpi^c(\gamma_i) = -1$ if $a_i > a_{i+1}$.
In both cases we have $\varpi^c (\gamma_i)\, f^* (\gamma_i) = a_{i+1} - a_i$.
We have $\varpi^c (\alpha_{2p})=-1$, hence $\varpi^c (\alpha_{2p})\, f^*(\alpha_{2p}) = a_{2p+1} - a_{2p}$.
So, $g (\hat \alpha) = g(\partial \hat \alpha) = a_{2p+1}-a_0=d-b=|d-b|$.

Suppose $a=c=1$ and $\ell = 1$.
Then $\partial \hat \alpha = \alpha_1 = \{ (1,b), (1,d) \}$, $b-d$ is odd, and $g (\partial \hat \alpha) = b-d = |b-d|$.
Suppose $a=c=1$ and $\ell > 1$.
Then we show in the same way as for the case $a=c=0$ and $\ell > 1$ that $b - d$ is odd and $g (\hat \alpha) = g (\partial \hat \alpha) = b-d = |b-d|$.

Suppose $a=0$ and $c=1$.
Then $\ell \ge 2$ and $\ell$ is even, say $\ell = 2p+2$.
There exists a sequence $a_0, a_1, \dots, a_{2p+2}$ in $\{ 1, \dots, n \}$ such that $\hat \alpha$ is of the form $\hat \alpha = (\alpha_0, \beta_1, \gamma_2, \dots, \beta_{2p-1}, \gamma_{2p}, \delta_{2p+1})$, where $\alpha_0=\{ (0,a_0), (1, a_1) \} \in E$, $\beta_i = \{ (0,a_i), (0, a_{i+1}) \} \in E'$ for all $i \in \{1, 3, \dots, 2p-1 \}$, $\gamma_i = \{ (1, a_i), (1, a_{i+1}) \} \in E$ for all $i \in \{2, 4, \dots, 2p \}$, and $\delta_{2p+1} = \{ (0, a_{2p+1}), (1, a_{2p+2}) \} \in E'$.
We also have $x'=(0,b)=(0,a_0)$ and $y'=(1,d)=(1,a_{2p+2})$.
The numbers $a_1 - a_0$ and $a_{2p+2} - a_{2p+1}$ are even and $a_{i+1} - a_i$ is odd for each $i \in \{1, \dots, 2p\}$, hence $d-b = a_{2p+2} - a_0$ is even.
We have $\varpi^c (\alpha_0)=1$, hence $\varpi^c (\alpha_0)\, f^*(\alpha_0) = a_1 - a_0$.
Let $i \in \{1,3, \dots, 2p-1\}$.
We have $\varpi^c (\beta_i) = 1$ if $a_i < a_{i+1}$ and $\varpi^c (\beta_i) = -1$ if $a_i > a_{i+1}$.
In both cases we have $\varpi^c (\beta_i)\, f^* (\beta_i) = a_{i+1} - a_i$.
Let $i \in \{2, 4, \dots, 2p \}$.
We have $\varpi^c(\gamma_i) = 1$ if $a_i < a_{i+1}$ and $\varpi^c(\gamma_i) = -1$ if $a_i > a_{i+1}$.
In both cases we have $\varpi^c (\gamma_i)\, f^* (\gamma_i) = a_{i+1} - a_i$.
We have $\varpi^c (\delta_{2p+1})=1$, hence $\varpi^c (\delta_{2p+1})\, f^*(\delta_{2p+1}) = a_{2p+2} - a_{2p+1}$.
So, $g (\hat \alpha) = g(\partial \hat \alpha) = a_{2p+2}-a_0= d-b$.

The above shows that $E * E' \in \EE_n^\nu$ and $\iota_n^\nu (E * E') = \iota_n^\nu (E) * \iota_n^\nu (E')$.

Let $\hat \gamma$ be a cycle in $E \sqcup E'$ of length $\ell$.
Then $\ell \ge 2$ and $\ell$ is even, say $\ell = 2p$.
There exists a sequence $a_0, a_1, \dots, a_{2p}$ in $\{1, \dots, n\}$ such that $\hat \gamma$ is of the form $\hat \gamma = (\alpha_1, \beta_2, \dots, \alpha_{2p-1}, \beta_{2p})$, where $\alpha_i = \{ (1, a_{i-1}), (1,a_i) \} \in E$ for all $i \in \{1, 3, \dots, 2p-1 \}$ and $\beta_i = \{ (0, a_{i-1}), (0, a_i) \} \in E'$ for all $i \in \{2, 4, \dots, 2p \}$.
We also have $a_{2p}=a_0$.
Let $i \in \{1, 3, \dots, 2p-1\}$.
We have $\varpi^c (\alpha_i) = 1$ if $a_{i-1} > a_i$ and $\varpi^c (\alpha_i) = -1$ if $a_{i-1} < a_i$.
In both cases we have $\varpi^c(\alpha_i) \, f^* (\alpha_i) = a_{i-1} - a_i$.
Let $i \in \{ 2, 4, \dots, 2p \}$.
We have $\varpi^c (\beta_i) = 1$ if $a_{i-1} > a_i$ and $\varpi^c (\beta_i) = -1$ if $a_{i-1} < a_i$.
In both cases we have $\varpi^c(\beta_i) \, f^* (\beta_i) = a_{i-1} - a_i$.
Thus, $h (\hat \gamma) = a_0 - a_{2p} = 0$, hence $\zeta (\hat \gamma) = 0$.
So, if $m$ is the number of cycles in $E \sqcup E'$, then $\iota_n^\nu (E)\, \iota_n^\nu (E') = z_0^m\, (\iota_n^\nu (E) * \iota_n^\nu (E'))$, hence $\iota_n^\nu (E)\, \iota_n^\nu (E') = \iota_n^\nu (E E')$.
This concludes the proof of the first part of the lemma.

Let $E \in \EE_n^\nu$ and let $(E,f) = \iota_n^\nu (E)$.
In order to prove the second part of the lemma, it suffices to show that, if $\hat \gamma$ is a cycle of $\hat E$, then $\zeta (\hat \gamma) = 0$, that is, $h(\hat \gamma) = 0$.
Let $\hat \gamma$ be a cycle of $\hat E$.
Then $\hat \gamma$ is of the form 
\begin{gather*}
\hat \gamma = (\alpha_{1,1}, \dots, \alpha_{1,p_1}, \beta_1, \gamma_{1,1}, \dots, \gamma_{1,q_1}, \delta_1, \dots, \alpha_{r,1}, \dots, \alpha_{r,p_r}, \beta_r, \gamma_{r,1}, \dots, \gamma_{r,q_r},\\ \delta_r, \alpha_{r+1,1}, \dots, \alpha_{r+1,p_{r+1}})\,,
\end{gather*}
where
\begin{gather*}
\alpha_{i,j} = \{ (0, a_{i,j-1}), (1, a_{i,j} ) \}\,,\
\beta_i = \{ (0,b_{i,0}), (0, b_{i,1}) \}\,,\
\gamma_{i,j} = \{ (1, c_{i,j-1}), (0, c_{i,j}) \}\,,\\ 
\delta_i = \{ (1, d_{i,0}), (1, d_{i,1}) \}\,,\ 
a_{i,p_i} = b_{i,0}\,,\
b_{i,1} = c_{i,0}\,,\
c_{i,q_i} = d_{i,0}\,,\
d_{i,1} = a_{i+1,0}\,,\
a_{r+1,p_{r+1}} = a_{1,0}\,.
\end{gather*}
We have $\varpi^c (\alpha_{i,j}) = 1$, hence $\varpi^c (\alpha_{i,j}) \, f (\alpha_{i,j}) = a_{i,j} - a_{i,j-1}$.
We have $\varpi^c (\beta_i) = -1$ if $b_{i,0} > b_{i,1}$ and $\varpi^c (\beta_i) = 1$ if $b_{i,0} < b_{i,1}$.
In both cases we have $\varpi^c (\beta_i) \, f (\beta_i) = b_{i,1} - b_{i,0}$.
We have $\varpi^c (\gamma_{i,j}) = -1$, hence $\varpi^c (\gamma_{i,j}) \, f (\gamma_{i,j}) = c_{i,j} - c_{i,j-1}$.
We have $\varpi^c (\delta_i) = -1$ if $d_{i,0} > d_{i,1}$ and $\varpi^c (\delta_i) = 1$ if $d_{i,0} < d_{i,1}$.
In both cases we have $\varpi^c (\delta_i) \, f (\delta_i) = d_{i,1} - d_{i,0}$.
Combining these equalities we get $h (\hat \gamma) = a_{r+1,p_{r+1}} - a_{1,0} = 0$, hence $\zeta (\hat \gamma) = 0$.
\end{proof}

\begin{proof}[Proof of Proposition \ref{prop3_7}]
We have $E_i \in \VTL_n^\nu (R^f)$ for all $i \in \{ 1, \dots, n-1 \}$, $\TL_n (R^f)$ is generated by $E_1, \dots, E_{n-1}$, and $\VTL_n^\nu (R^f)$ is a subalgebra of $\VTL_n(R^f)$, hence $\TL_n (R^f) \subset \VTL_n^\nu (R^f)$.
Moreover, since $\iota_n (E_i) = \iota_n^\nu (E_i)$ for all $i \in \{1, \dots, n-1\}$, we have $\iota_n (x) = \iota_n^\nu (x)$ for all $x \in \TL_n (R^f)$.
We conclude from Lemma \ref{lem3_8} that $T_n'^a (\iota_n (x)) = T_n'^f (x) \in R^f$ for all $x \in \TL_n (R^f)$.
\end{proof}

Now, it remains to prove the following. 

\begin{thm}\label{thm3_9}
Let $I^a: \VV\LL\to R^a$ be the invariant defined from the Markov trace of Corollary \ref{corl3_6}.
Then $I^a$ coincides with the arrow polynomial.
\end{thm}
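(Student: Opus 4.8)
The plan is to follow verbatim the strategy of the proof of Theorem~\ref{thm2_8}, replacing the Kauffman bracket by the arrow Kauffman bracket, the flat tangles by arrow flat tangles, and the closed-curve count by the zigzag count. Let $\beta \in \VB_n$ and let $\hat\beta$ be its closure. First I would observe that the arrow Kauffman bracket relations of Figure~\ref{fig3_4}, read in terms of closed virtual braids, correspond to replacing each positive crossing $\sigma_i$ by $A\,1 + A^{-1}F_i$ and each negative crossing $\sigma_i^{-1}$ by $A^{-1}\,1 + A\,F_i$, where $F_i$ is the generator of $\ATL_n$ recording the disoriented smoothing. The content of this step is purely local: one must check that the disoriented smoothing of Figure~\ref{fig3_4}, which introduces a pair of cusps, is encoded exactly by the parity structure of $F_i$ as drawn in Figure~\ref{fig3_7}, while the oriented smoothing gives $1$.

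Performing these substitutions throughout $\beta$ produces a linear combination $\sum_{i=1}^\ell a_i\,F^{(i)}$ with $F^{(i)} \in \FF_n$ and $a_i \in R^a$. Next I would show that, for each $i$, the zigzag count of the closure computed by $T_n'^a(F^{(i)}) = z_{\zeta(\hat\gamma_1)}\cdots z_{\zeta(\hat\gamma_{m_i})}$ agrees with the monomial $\prod_j z_{\zeta(S_j)}$ that the arrow bracket assigns to the all-virtual diagram obtained from the corresponding global smoothing of $\hat\beta$. Summing over $i$ then yields $\langle\!\langle \hat\beta\rangle\!\rangle = \sum_{i=1}^\ell a_i\,T_n'^a(F^{(i)})$, exactly mirroring the identity $\langle \hat\beta\rangle = \sum_i a_i z^{m_i}$ in the proof of Theorem~\ref{thm2_8}.

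It then remains to account for the writhe. As in the proof of Theorem~\ref{thm2_8}, let $\omega : \VB_n \to \Z$ be the homomorphism sending $\sigma_i$ to $1$ and $\tau_i$ to $0$; then $w(\hat\beta) = \omega(\beta)$ and $\overrightarrow{f}(\hat\beta) = (-A^3)^{-\omega(\beta)}\langle\!\langle\hat\beta\rangle\!\rangle$. Absorbing one factor $(-A^3)^{-1}$ into each positive smoothing turns $A\,1 + A^{-1}F_i$ into $-A^{-2}\,1 - A^{-4}F_i = \rho_n^a(\sigma_i)$, and absorbing one factor $(-A^3)$ into each negative smoothing turns $A^{-1}\,1 + A\,F_i$ into $-A^2\,1 - A^4 F_i = \rho_n^a(\sigma_i^{-1})$ (recall $S_i^{-1} = -A^2\,1 - A^4 F_i$ from the proof of Theorem~\ref{thm3_4}). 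Hence the same procedure computes $\overrightarrow{f}(\hat\beta)$ on the one hand and $T_n'^a(\rho_n^a(\beta)) = T_n^a(\beta) = I^a(\hat\beta)$ on the other, giving $I^a(\hat\beta) = \overrightarrow{f}(\hat\beta)$.

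The main obstacle is the zigzag-matching of the second paragraph: one must verify that the combinatorial number of zigzags $\zeta(\hat\gamma) = \frac{|h(\hat\gamma)|}{2}$ attached to a cycle of $\hat{E}^{(i)}$, computed from the labels $f$ and the cumulated parities $\varpi^c$, coincides with the topological number of zigzags $\zeta(S)$ obtained by reducing the corresponding component $S$ of the smoothed all-virtual diagram. Concretely, I expect to show that the reductions of Figure~\ref{fig3_3} correspond precisely to the cancellation of opposite-sign contributions in the sum $\sum_i \varpi^c(\gamma_i)\,f(\gamma_i)$, so that $|h(\hat\gamma)|$ counts exactly the surviving cusps of the irreducible component; this is where the definitions of the multiplication in $\ATL_n$ and of $T_n'^a$ were tailored to carry out the bookkeeping, and it is the only step genuinely beyond the $f$-polynomial case.
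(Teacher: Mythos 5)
Your proposal is correct and follows essentially the same route as the paper's proof of Theorem \ref{thm3_9}: the same substitutions $\sigma_i \mapsto A\,1+A^{-1}F_i$, $\sigma_i^{-1}\mapsto A^{-1}\,1+A\,F_i$ read off from Figure \ref{fig3_4}, the same identity $\langle\!\langle\hat\beta\rangle\!\rangle=\sum_i a_i\,z_{\zeta(\hat\gamma_{i,1})}\cdots z_{\zeta(\hat\gamma_{i,m_i})}$, and the same absorption of $(-A^3)^{\mp 1}$ into the smoothings to recover $\rho_n^a$ and hence $T_n^a(\beta)=\overrightarrow{f}(\hat\beta)$. The zigzag-matching step you single out as the main obstacle is exactly the point the paper dispatches with ``it is easily seen,'' so your more explicit flagging of it is a matter of presentation, not a different argument.
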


\begin{proof}
The proof is similar to that of Theorem \ref{thm2_8}.
Let $\beta$ be a virtual braid on $n$ strands and let $\hat\beta$ be its closure.
It is easily seen that the relations $\langle\!\langle L \rangle\!\rangle = A \langle\!\langle L_1 \rangle\!\rangle + A^{-1} \langle\!\langle L_2 \rangle\!\rangle$ and $\langle\!\langle L \rangle\!\rangle = A^{-1} \langle\!\langle L_1 \rangle\!\rangle + A \langle\!\langle L_2\rangle\!\rangle$ in the definition of the arrow Kauffman bracket corresponds in terms of closed virtual braids to replacing each $\sigma_i$ with $A\,1+A^{-1}F_i$ and each $\sigma_i^{-1}$ with $A^{-1}\,1+A\,F_i$.
Once we have replaced each $\sigma_i$ with $A\,1+A^{-1}F_i$ and each $\sigma_i^{-1}$ with $A^{-1}\,1+A\,F_i$, we get a linear combination $\sum_{i=1}^\ell a_iF^{(i)}$, where $F^{(i)} = (E^{(i)},f^{(i)}) \in \FF_n$ and $a_i\in R^a$ for all $i$.
For $i\in \{1, \dots, \ell\}$ let $\hat \gamma_{i,1}, \dots, \hat \gamma_{i,m_i}$ be the cycles of $\hat E^{(i)}$.
Then 
\[
\langle\!\langle \hat \beta \rangle\!\rangle =\sum_{i=1}^\ell a_i z_{\zeta(\hat \gamma_{i,1})} z_{\zeta(\hat \gamma_{i,2})} \cdots z_{\zeta(\hat \gamma_{i,m_i})}\,. 
\]
Recall that $w : \VV \LL \to \Z$ denotes the writhe.
Let $\omega: \VB_n \to \Z$ be the homomorphism which sends $\sigma_i$ to $1$ and $\tau_i$ to $0$ for all $i \in \{1, \dots, n-1\}$.
Then $w (\hat \beta) = \omega (\beta)$, and therefore $\overrightarrow{f} (\hat \beta) = (-A^3)^{-\omega (\beta)} \langle\!\langle \hat \beta \rangle\!\rangle$.
So, in the above procedure, if we replace each $\sigma_i$ with $(-A^3)^{-1}(A\,1+A^{-1}F_i)=-A^{-2}\,1-A^{-4}F_i$ and each $\sigma_i^{-1}$ with $(-A^3)(A^{-1}\,1+A\,F_i)=-A^2\,1-A^4F_i$, then we get directly $\overrightarrow{f} (\hat \beta)$.
It is clear that this procedure also leads to $T_n^a (\beta)$.
\end{proof}




\begin{thebibliography}

\bibitem{DyeKau1}
{\bf H\,A Dye, L\,H Kauffman,}
{\it Virtual crossing number and the arrow polynomial,}
J. Knot Theory Ramifications 18 (2009), no. 10, 1335--1357.

\bibitem{Jones2}
{\bf V\,F\,R Jones,}
{\it A polynomial invariant for knots via von Neumann algebras,}
Bull. Amer. Math. Soc. (N.S.) 12 (1985), no. 1, 103--111.

\bibitem{Jones1}
{\bf V\,F\,R Jones,}
{\it Hecke algebra representations of braid groups and link polynomials,}
Ann. of Math. (2) 126 (1987), no. 2, 335--388.

\bibitem{Kamad1}
{\bf S Kamada,}
{\it Braid presentation of virtual knots and welded knots,}
Osaka J. Math. 44 (2007), no. 2, 441--458. 

\bibitem{Kauff4}
{\bf L\,H Kauffman,}
{\it State models and the Jones polynomial,}
Topology 26 (1987), no. 3, 395--407.

\bibitem{Kauff1}
{\bf L\,H Kauffman,}
{\it Virtual knot theory,}
European J. Combin. 20 (1999), no. 7, 663--690.

\bibitem{Kauff2}
{\bf L\,H Kauffman,}
{\it A survey of virtual knot theory,}
Knots in Hellas '98 (Delphi), 143--202, Ser. Knots Everything, 24, World Sci. Publ., River Edge, NJ, 2000. 

\bibitem{Kauff3}
{\bf L\,H Kauffman,}
{\it An extended bracket polynomial for virtual knots and links,}
J. Knot Theory Ramifications 18 (2009), no. 10, 1369--1422.

\bibitem{KauLam1}
{\bf L\,H Kauffman, S Lambropoulou,}
{\it Virtual braids and the L-move,}
J. Knot Theory Ramifications 15 (2006), no. 6, 773--811. 

\bibitem{LiLeLi1}
{\bf Z Li, F Lei, J Li,}
{\it Virtual braids, virtual Temperley-Lieb algebra and $f$-polynomial,}
Chin. Ann. Math. Ser. B 38 (2017), no. 6, 1275--1286. 

\bibitem{Miyaz1}
{\bf Y Miyazawa,}
{\it A multi-variable polynomial invariant for virtual knots and links,}
J. Knot Theory Ramifications 17 (2008), no. 11, 1311--1326.

\bibitem{Versh1}
{\bf V\,V Vershinin,}
{\it On homology of virtual braids and Burau representation,}
Knots in Hellas '98, Vol. 3 (Delphi),
J. Knot Theory Ramifications 10 (2001), no. 5, 795--812.

\bibitem{ZhKAGe1}
{\bf Y Zhang, L\,H Kauffman, M-L Ge,}
{\it Virtual Extension of Temperley--Lieb Algebra,}
preprint, 2006, arXiv:math-ph/0610052.

\end{thebibliography}
\end{document}